\renewcommand{\dashint}{\fint}
\newcommand{\Jspw}{\mathcal{J}^s_{p,w}}
\newcommand{\Wspw}{{W^{s,p}_w(\RRn)}}
\newcommand{\Wspwz}{{W^{s,p}_{w,0}(\Omega)}}
\newcommand{\Lpw}{{L^p_w(\RRn)}}
\newcommand{\WspwO}{{W^{s,p}_w(\Omega)}}
\newcommand{\dwxy}{\frac{w(x)w(y)}{w(B_{x,y})}dx\,dy}
\renewcommand{\le}{\leqslant}
\renewcommand{\ge}{\geqslant}
\renewcommand{\leq}{\leqslant}
\renewcommand{\geq}{\geqslant}
\newcommand{\rom}[1]{\uppercase\expandafter{\romannumeral #1\relax}}
\newcommand{\ds}{\displaystyle}
\newcommand{\ep}{\epsilon}
\begin{document}

\title{Nonlocal equations with degenerate weights}

\author{Linus Behn}
\address{Linus Behn, University Bielefeld, Universit\"atsstr. 25, 33615 Bielefeld, Germany}
\email{linus.behn@math.uni-bielefeld.de}

\author{Lars Diening}
\address{Lars Diening, University Bielefeld, Universit\"atsstr. 25, 33615 Bielefeld, Germany}
\email{lars.diening@uni-bielefeld.de}


\author{Jihoon Ok}
\address{Jihoon Ok, Department of Mathematics, Sogang University, Seoul 04107, Republic of Korea}
\email{jihoonok@sogang.ac.kr}

\author{Julian Rolfes}
\address{Julian Rolfes, University Bielefeld, Universit\"atsstr. 25, 33615 Bielefeld, Germany}
\email{julian.rolfes@uni-bielefeld.de}

\thanks{This work was partially funded by the Deutsche Forschungsgemeinschaft   (DFG, German Research Foundation) - SFB 1283/2 2021 - 317210226 (project A7) and IRTG 2235 (Project 282638148). J. Ok was supported by the National Research Foundation of Korea by the Korean Government (NRF-2022R1C1C1004523)}

\subjclass[2020]{
35J60, 
35R09, 
35R11, 
35R05, 
35J70, 
35B65
}



\keywords{%
degenerate equations,
De Giorgi,
regularity,
nonlocal,
p-Laplace,
Muckenhoupt weights,
fractional%
}

\begin{abstract}
  We introduce fractional weighted Sobolev spaces with degenerate weights. For these spaces we provide embeddings and Poincaré inequalities. When the order of fractional differentiability goes to $0$ or $1$, we recover the weighted Lebesgue and Sobolev spaces with Muckenhoupt weights, respectively. Moreover, we prove interior Hölder continuity and Harnack inequalities for solutions to the corresponding weighted nonlocal integro-differential equations. This naturally extends a classical result by Fabes, Kenig, and Serapioni to the nonlinear, nonlocal setting.
\end{abstract}

\maketitle


\section{Introduction}
\label{sec:introduction}

Research on fractional nonlocal integro-differential equations has become one of the most prominent topics in the field of partial differential equations.  In particular, regularity theory for nonlocal equations associated with the fractional energy functional given by
\begin{align}\label{fractionalenergy}
 \int_{\RRn} \int_{\RRn} \bigg(\frac{|v(x)-v(y)|}{|x-y|^s}\bigg)^p k(x,y) \,dx \,dy
\end{align}
have been extensively studied over the last decade. In \eqref{fractionalenergy}, $s\in (0,1)$ denotes the the fractional differentiability parameter, $1<p < \infty$, and $k(x,y)$ is a nonnegative kernel on $\RR^n\times \RR^n$.
Note that, if $k(x,y)=|x-y|^{-n}$, then the Euler-Lagrange equation of \eqref{fractionalenergy} corresponds to the fractional $p$-Laplace equation $(-\Delta)^s_pu=0$ and when $p=2$ it corresponds to a linear equation.  The main purpose of this paper is to establish regularity theory for weak solutions to specific classes of degenerate or singular nonlocal equations that do not satisfy the standard ellipticity condition $k(x,y)\eqsim |x-y|^{-n}$.

The energy functional in \eqref{fractionalenergy} is the fractional counterpart to  the classical energy functional $\int_{\RR^n}  a (x) |\nabla v|^p \,dx$, where $a:\RR^n\to [0,\infty]$. The corresponding Euler-Lagrange equation is the following $p$-Laplace type equation:
\begin{align}\label{pLaplaceequation}
 \mathrm{div} (a(x)|Du|^{p-2}Du) =0.
\end{align}
Specifically, when $p=2$, this reduces to the linear equation
\begin{align}\label{linearequation}
 \mathrm{div} (A(x) Du) =0
\end{align}
with the matrix valued function $A(x)=a(x)I_{n}$, where $I_n$ is the $n\times n$ identity matrix. In general, for the linear equation \eqref{linearequation},
it is well-known that  if $A(x)$ satisfies the uniform ellipticity condition
\begin{align}
\lambda |\xi|^2 \le A(x)\xi \cdot  \xi \le \Lambda |\xi|^2
\end{align}
for some $0<\lambda\le \Lambda<\infty$, then the weak solution to \eqref{linearequation} is H\"older continuous, and satisfies Harnack's inequality by the De Giorgi-Nash-Moser theory (see, e.g., \cite{DeGiorgi57}). These results extend to the $p$-Laplace case, see \cite[Chapters 4.7 and 5.3]{LadyUralt68}.
An interesting and important question is the development of regularity theory when the coefficient $A(x)$ does not satisfy the uniform ellipticity condition. For the linear equation \eqref{linearequation}, Fabes, Kenig
and Serapioni \cite{FabKenSer82} proved Hölder continuity and Harnack's inequality for weak solutions to \eqref{linearequation} when $A(x)$ satisfies that
\begin{align*}
\lambda w(x)|\xi|^2 \le A(x)\xi \cdot  \xi \le \Lambda w(x)|\xi|^2,
\end{align*}
with a weight function $w$ belonging to the Muckenhoupt $A_2$ class. This result is sharp in the sense that there exists a weight $w$ with $w\in A_p$ for every $p>2$, such that the equation \eqref{linearequation} has non-H\"older continuous solution (see \cite[Section 2.3]{FabKenSer82}). We also refer to \cite{MS68,Trudinger71,Trudinger73,BellaSchaeffner21} for regularity results for degenerate linear equations, \cite{CruzMN13,Modica85,MRW15,BellaSchäffner23,CianchiSchaeffner2024} for the extension to  $p$-Laplace type problems, and \cite{BDGP22,BalciDieningByunLee23,CaoMP18,Stredulinsky-book} for  gradient estimates for degenerate equations.

For fractional nonlocal equations with energy functional \eqref{fractionalenergy}, the condition corresponding to the uniform ellipticity is that  $k(x,y)\eqsim \abs{x-y}^{-n}$. Under this condition, regularity results  have been explored for instance in \cite{CaffaSilve07,CCV11,BKassmann05,Kassmann07} for linear equations and \cite{DiCastroKuusiPalatucci14,DiCastroKuusiPalatucci16,Cozzi17} for fractional $p$-Laplace type equations.
In particular,  in \cite{DiCastroKuusiPalatucci16,DiCastroKuusiPalatucci14}, the authors obtain  sharp local H\"older estimates and Harnack inequality using nonlocal tail terms, by applying De Giorgi's approach to nonlocal problems. Since then there have been extensive research activities on regularity theory for fractional nonlocal equation.  For further regularity results for fractional nonlocal problems of the $p$-Laplace type, we refer to \cite{KuusiMingioneSire15,Schikorra16,BrascoLindgren17,BrascoLindgrenSchikorra18,IannMosconiSquassina16,DieningNowak23,ByunKim23,DieningKimLeeNowak24,GarainLindgren24,Boegeleinetal24} and for related results on fractional equations with more general growth conditions to \cite{ByunKimOk23,ChakerKimWeidner22,BehnDieningNowakScharle24}.

In this paper, we focus on fractional nonlocal problems with degenerate kernels $k(x,y)$. Our main model for the kernel in \eqref{fractionalenergy} is
\begin{align}\label{kernel_model}
k(x,y) = \frac{w(x)w(y)}{w(B_{x,y})} =  c_n\frac{w(x)w(y)}{\fint_{B_{x,y}}w(z)\,dz}\frac{1}{|x-y|^n},
 \end{align}
where  $w(x)$ is a weight, $w(B) = \int_B w(z)\,dz$ and $B_{x,y}\coloneqq B_{\frac 12 \abs{x-y}}(\frac{x+y}{2})$. We will make the assumption that $w$ is in the Muckenhoupt class $A_p$, which is natural in this context. Some background on Muckenhoupt weights is collected in Section \ref{ssec:basic-embedding}. Note that  the kernel $k$ in \eqref{kernel_model} does not satisfy the uniform ellipticity condition since $k(x,y)=0$ when $w(x)=0$ or $w(y)=0$.
Function spaces related to the energy \eqref{fractionalenergy} with $k(x,y)$ as given in \eqref{kernel_model} have been studied as interpolation spaces between Lebesgue and Sobolev spaces in the Euclidean space equipped with the measure $d\mu(x)=w(x)dx$. See \cite{GKS10,CDM19} for further details. However, fractional nonlocal equations associated with these energy functionals have not been systematically studied yet. To the best of our knowledge, even in the linear case, this is the first paper to investigate regularity theory for nonlocal equations with degenerate weights.

The paper consists of two major parts. In the first part, Section~\ref{sec:fract-weight-sobol}, we introduce and study fractional weighted Sobolev spaces with degenerate weights. We explore various properties, including the density of smooth functions and Sobolev-\Poincare~estimates, directly without relying on interpolation. We emphasize that the estimates obtained in this section are stable as the order of fractional differentiability $s$ approaches $1$.  To this end, we develop a novel Riesz-type potential estimate in Appendix~\ref{sec:appendix}, which is of independent interest.

The second part, Sections~\ref{sec:regularity}, focuses on regularity theory for associated degenerate nonlocal equations. We prove local boundedness (Theorem \ref{thm:bounded}), interior H\"older regularity (Theorem \ref{thm:Holder}), and Harnack's inequality (Theorem \ref{thm:Harnack}) for their weak solutions. This exactly corresponds to the results in \cite{FabKenSer82} for the classical linear equation \eqref{linearequation}.  We highlight that,  thanks to the $s$-stable Sobolev-\Poincare~inequality from Section~\ref{sec:fract-weight-sobol}, we can apply the nonlocal version of De Giorgi's approach, as developed in \cite{DiCastroKuusiPalatucci14,DiCastroKuusiPalatucci16}, and derive relevant regularity estimates that remain stable as $s$ approaches $1$. Finally, we would like to remark that  although we prove interior local H\"older continuity for homogenous problems, we expect that by the same approach we can also show global H\"older continuity for nonhomogeneous problems.

\section{Fractional weighted  Sobolev spaces}
\label{sec:fract-weight-sobol}

In this section we introduce nonlocal energies with degenerate weights and their corresponding function spaces. We show density of smooth, compactly supported functions and investigate compact embeddings. Moreover, we see that local weighted spaces are recovered if the order of fractional differentiability $s$ goes to $0$ resp. $1$. Finally, we prove a Sobolev-\Poincare~inequality which is stable as $s\nearrow 1$.

\subsection{Fractional, weighted energies}
\label{ssec:fract-weight-energ}

In this section we introduce our nonlocal energies.
We start with a bit of standard notation.

From now on let $n \geq 2$. By $L^1_{\loc}(\RRn)$ we denote the locally integrable functions.  A function $w \in L^1_{\loc}(\RRn)$ with $w >0$ almost everywhere is called a \emph{weight}. Then for a measurable set $U\subset \RRn$, we define the measure with respect to $w(x)\,dx$ by $w(U) \coloneqq \int_U w(x)\,dx$. By~$\abs{U}$ we denote the Lebesgue measure of~$U$.  By $L^p_w(\RRn)$ we denote the usual weighted Lebesgue space with norm $(\int_{\RRn} \abs{v}^p w\,dx)^{\frac 1p}$. Let $C^\infty_c(\RRn)$ and~$C^\infty_c(\Omega)$ denote the smooth, compactly supported functions on~$\RRn$ and~$\Omega \subset \RRn$, respectively. By $B_r(x)$ we denote a ball with center~$x$ and radius~$r$. For $x,y \in \RRn$ let $B_{x,y} \coloneqq B_{\frac 12 \abs{x-y}}(\frac{x+y}{2})$. Then $\overline{B_{x,y}}$ is the smallest (closed) ball containing $x$ and $y$. For a ball~$B$ and $\lambda >0$ we denote by $\lambda B$ the ball with the same center and $\lambda$ times the radius. For quantities $A$ and $B$, we write $A\lesssim B$ if $A\le c B$ for some universal constant $c>0$, and $A\eqsim B$ if $A\lesssim B$ and $B \lesssim A$.

From now on we will always assume that $s\in(0,1)$ and $1< p <\infty$. Let $w$ be a  weight on~$\RRn$. We define the \emph{fractional, weighted energy} $\Jspw\,:\, L^1_{\loc}(\RRn) \to [0, \infty]$ by
\begin{subequations}\label{eq:Jspw}
  \begin{align}
    \label{eq:Jspw-a}
  \mathcal{J}_{p,w}^s(v)  \coloneqq  c_s \int_{\RRn} \int_{\RRn} \bigg(\frac{|v(x)-v(y)|}{|x-y|^s}\bigg)^p k(x,y) \,dx \,dy,
\end{align}
where $k\,:\, \RRn \times \RRn \to [0,\infty)$ satisfies
\begin{align}
  \label{eq:cond-k}
  \lambda \frac{w(x)w(y)}{w(B_{x,y})} \leq
  k(x,y) \leq \Lambda \frac{w(x)w(y)}{w(B_{x,y})}
\end{align}
\end{subequations}
for some $0<\lambda \leq \Lambda < \infty$ and $c_s \coloneqq s(1-s)$. Note that strictly speaking the energy depends on $k$ and only indirectly on $w$. This will not play any role for the function spaces investigated in this section. However, it will become important for the minimizers studied in Section \ref{sec:regularity}.

For $w=1$ and $\lambda = \Lambda$ we therefore obtain the energy corresponding to the usual fractional Sobolev space~$W^{s,p}(\RRn)$. The constant~$c_s=s(1-s)$ will be used to keep track of the stability for~$s \nearrow 1$ and sometimes also for $s \searrow 0$.

\subsection{Fractional weighted Sobolev spaces}
\label{ssec:fract-weight-sobol-1}

In this section we introduce the fractional, weighted Sobolev spaces.  The energy $\Jspw$ from \eqref{eq:Jspw} is strictly convex and can be used to define the fractional, weighted Sobolev space
\begin{align*}
  \Wspw &\coloneqq \set{ v \in L^1_{\loc}(\RRn)\,:\, \Jspw(v) < \infty}.
\end{align*}
Note that
\begin{align*}
  \abs{v}_{\Wspw} \coloneqq  \big(\Jspw(v)\big)^{\frac 1p}
\end{align*}
defines a semi-norm on~$\Wspw$ and $\abs{v}_{\Wspw}=0$ if and only if~$v$ is constant. To obtain a norm, one could either take the  quotient space or add a norm like $\norm{v}_{L^p_w(B_1(0))}$.

Furthermore, for an open and bounded set $\Omega \subset \RRn$ we define
\begin{align*}
  \Wspwz &\coloneqq \set{ v \in \Wspw\,:\, v|_{\Omega^c} = 0}.
\end{align*}
Then $(\Wspwz, |\cdot|_{\Wspw})$ is a normed space. Note that in this case we still integrate over $\RRn \times \RRn$. In some situations we consider functions that are only defined on some open set $\Omega$. For this we set for a measurable set $M\subset \RRn \times \RRn$ and  measurable function $v$
\begin{align}\label{eq:JAB}
  \Jspw (v\,|\, M)\coloneqq c_s \int_{\RRn} \int_{\RRn} \indicatorset{(x,y)\in M}\bigg(\frac{|v(x)-v(y)|}{|x-y|^s}\bigg)^p k(x,y) \,dx \,dy .
\end{align}
For example, we have $\Jspw (v)= \Jspw(v\,|\, \RRn \times \RRn)$. Suppose in the following that~$\Omega \subset \RRn$ is open and bounded. At this step, we could include more general $\Omega$, but for simplicity we restrict ourselves to the case that we need later in the application. We then define
\begin{align}
  \Jspw (v\,|\, \Omega) \coloneqq \Jspw (v\, | \, \Omega \times \Omega),
\end{align}
and $\WspwO \coloneqq \set{v\in L^p(\Omega): \Jspw (v \,|\, \Omega)<\infty}$. We now investigate the properties of the space $\Wspwz$.

\begin{lemma}
  \label{lem:Banachspace}
  Let $1<p<\infty$, $s \in (0,1)$ and~$\Omega \subset \RRn$ be open and bounded. Then $(\Wspwz,\abs{\cdot}_{\Wspw})$ is a uniformly convex Banach space.
\end{lemma}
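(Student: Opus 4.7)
First, $\abs{\cdot}_{\Wspw}$ is a norm on $\Wspwz$: the triangle inequality follows from Minkowski's inequality applied under the double integral defining $\Jspw$, and if $\abs{v}_{\Wspw}=0$ then the integrand forces $v$ to be almost everywhere constant, which combined with $v|_{\Omega^c}=0$ and $\abs{\Omega^c}>0$ yields $v\equiv 0$.

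The core idea for both completeness and uniform convexity is an isometric embedding into an $L^p$-space. Introduce the measure $d\mu(x,y) \coloneqq c_s\, k(x,y) |x-y|^{-sp}\,dx\,dy$ on $\RRn\times\RRn$ and the linear map $T \colon \Wspwz \to L^p(\RRn\times\RRn,d\mu)$ defined by $Tv(x,y) \coloneqq v(x)-v(y)$. Then $\norm{Tv}_{L^p(d\mu)} = \abs{v}_{\Wspw}$, so $T$ is an isometry. Given a Cauchy sequence $(v_k)\subset \Wspwz$, the images $Tv_k$ form a Cauchy sequence in $L^p(d\mu)$, hence converge to some $V\in L^p(d\mu)$; after passing to a subsequence we may assume $Tv_k\to V$ pointwise $\mu$-a.e. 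Since $w>0$ almost everywhere implies $k>0$ almost everywhere on $\RRn\times\RRn$, the measure $\mu$ is equivalent to Lebesgue measure off the diagonal, so this convergence also holds Lebesgue-a.e.\ on that region.

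To identify $V$ as $Tv$ for an honest scalar function $v$, I would exploit the Dirichlet condition. Choose a bounded set $A \subset \Omega^c$ at positive distance from $\Omega$ with $\abs{A}>0$ (e.g.\ an annulus disjoint from a ball containing $\Omega$); for $(x,y)\in \Omega\times A$ the map collapses to $Tv_k(x,y)=v_k(x)$. By Fubini, the pointwise convergence on $\Omega\times A$ produces a function $v$ on $\Omega$ with $v_k\to v$ Lebesgue-a.e.\ on $\Omega$ and $V(x,y)=v(x)$ on $\Omega\times A$. Extending $v$ by $0$ on $\Omega^c$, the a.e.\ convergence $v_k\to v$ on $\RRn$ propagates to $Tv_k\to Tv$ a.e.\ on $\RRn\times\RRn$, and uniqueness of pointwise limits forces $V=Tv$. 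Hence $v\in\Wspwz$ with $\abs{v}_{\Wspw}=\norm{V}_{L^p(d\mu)}$ and $v_k\to v$ in $\abs{\cdot}_{\Wspw}$. Uniform convexity is then inherited from $L^p(d\mu)$ (uniformly convex for $1<p<\infty$ via Clarkson's inequalities) through the isometric embedding $T$.

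The main obstacle is the identification step: at this stage of the paper no Poincar\'e or embedding inequality is available to upgrade the Cauchyness of $(v_k)$ in $\abs{\cdot}_{\Wspw}$ directly to $L^p_w$-Cauchyness. The trick is to use the vanishing on $\Omega^c$ to turn $Tv_k$ on $\Omega\times A$ into the pointwise values $v_k(x)$, converting $L^p(d\mu)$-convergence of $Tv_k$ into Lebesgue-a.e.\ convergence of $v_k$ on $\Omega$, which is precisely what is needed to reconstruct $v$.
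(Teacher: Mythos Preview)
Your proposal is correct and follows essentially the same strategy as the paper: both embed $\Wspwz$ isometrically into a weighted $L^p$ space on $\RRn\times\RRn$ via $v\mapsto v(x)-v(y)$, use completeness of that space to obtain a limit, exploit the vanishing on $\Omega^c$ to reconstruct a scalar function (the paper picks an arbitrary $z\in\Omega^c$ and sets $u(x)\coloneqq\tilde u(x,z)$, you use a fixed set $A\subset\Omega^c$ and Fubini), and inherit uniform convexity from $L^p$. The only difference is cosmetic in how the limit is identified.
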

\begin{proof}
  We can map every $u \in \Wspwz$ to $\bar{u}(x,y) \coloneqq u(x) - u(y)$. This defines a (non-bijective) isometry~$T$ from $\Wspwz$ to the weighted space $L^p_{\abs{x-y}^{-s} k(x,y)}(\RRn \times \RRn) \eqqcolon Z$. Note that~$Z$ is a uniformly convex Banach space. Let $u_m \in \Wspwz$ be a Cauchy sequence. Then $\bar{u}_m \coloneqq T u_m$ is Cauchy in~$Z$ and there exists a limit~$\tilde{u} \in Z$. Moreover, by passing to a subsequence, we have $\bar{u}_m \to \tilde{u}$ almost everywhere. Since $\tilde{u}_m(x,\zeta)=\tilde{u}_m(x,z)$ for almost all $\zeta,z \in \Omega^c$, we have $\tilde{u}(x,\zeta)=\tilde{u}(x,z)$ for almost all $\zeta,z \in \Omega^c$. This allows to define $u(x) \coloneqq \tilde{u}(x,z)$ for (arbitrary)~$z \in \Omega^c$. Moreover, $\bar{u} = \tilde{u}$, since
  \begin{align*}
    \bar{u}(x,y) &= u(x)-u(y) = \bar{u}(x,z) - \bar{u}(y,z) = \lim_{m \to \infty} \big( \bar{u}_m(x,z) - \bar{u}_m(y,z)\big)
    \\
    &= \lim_{m \to \infty}(u_m(x) -u_m(y))
    = \lim_{m \to \infty} \bar{u}_m(x,y) = \tilde{u}(x,y).
  \end{align*}
  Since $\bar{u}_m \to \bar{u}$ and~$T$ is an isometry, we conclude that $u_m \to u$ in $\Wspwz$. Thus, $\Wspwz$ is a Banach space. Since~$Z$ is uniformly convex, so is~$\Wspwz$.
\end{proof}
\begin{lemma}
  \label{lem:embedding-simple}
  Let $1<p<\infty$, $s \in (0,1)$ and~$\Omega \subset \RRn$ be open and bounded. Then  $\Wspwz \embedding L^p_w(\RRn)$.
\end{lemma}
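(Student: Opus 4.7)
The plan is to exploit the zero boundary condition: any $u \in \Wspwz$ vanishes on $\Omega^c$, which turns the $L^p_w$ mass of $u$ on $\Omega$ into a difference pivoted against values in the exterior. Since $w > 0$ almost everywhere, $w \in L^1_{\mathrm{loc}}$, and $\Omega$ is bounded, I first fix an auxiliary ball $B^\star \subset \Omega^c$ with $0 < w(B^\star) < \infty$. For every $x \in \Omega$ and $y \in B^\star$ one has $u(y)=0$, and hence $|u(x)|^p = |u(x)-u(y)|^p$.

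Averaging this identity against the probability measure $w(y)\,dy/w(B^\star)$ on $B^\star$ and then integrating against $w(x)\,dx$ over $\Omega$ yields
\begin{equation*}
  \int_\Omega |u(x)|^p w(x)\,dx = \frac{1}{w(B^\star)} \int_\Omega \int_{B^\star} |u(x)-u(y)|^p w(x) w(y)\,dy\,dx.
\end{equation*}
For $(x,y) \in \Omega \times B^\star$ the distance $|x-y|$ is pinched between two positive constants $d$ and $D$ depending only on $\Omega$ and $B^\star$; in particular $B_{x,y}$ is contained in some fixed large ball $B^{\star\star}$, so $w(B_{x,y}) \le w(B^{\star\star}) < \infty$ by local integrability of $w$. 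Combined with the lower bound $k(x,y) \ge \lambda\, w(x)w(y)/w(B_{x,y})$ from \eqref{eq:cond-k}, this gives
\begin{equation*}
  w(x) w(y) \le \frac{w(B^{\star\star})\, D^{sp}}{\lambda} \cdot \frac{k(x,y)}{|x-y|^{sp}}.
\end{equation*}
Plugging this back into the previous display and recalling the prefactor $c_s$ in \eqref{eq:Jspw-a} delivers $\|u\|_{\Lpw}^p \lesssim c_s^{-1}\, \Jspw(u) = c_s^{-1}\, |u|_{\Wspw}^p$, which is the claimed embedding.

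The argument is a direct weighted Poincaré-type comparison, and the only real ``trick'' is the choice of a reference ball $B^\star$ in the exterior to convert the pointwise value $|u(x)|$ into the difference $|u(x)-u(y)|$. No substantial obstacle arises, although the resulting constant depends on $\Omega$, $B^\star$, $w$, $\lambda$, $p$, and on $s$ through $c_s^{-1}$, so it degenerates as $s \searrow 0$ or $s \nearrow 1$. This is acceptable here since the lemma only asks for qualitative continuity of the inclusion rather than an $s$-stable estimate.
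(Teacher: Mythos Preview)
Your proof is correct and follows essentially the same approach as the paper: both arguments pick a region in~$\Omega^c$ (the paper uses the annulus $3B\setminus 2B$ for a ball $B\supset\Omega$, you use an auxiliary ball $B^\star\subset\Omega^c$), use $u=0$ there to rewrite $|u(x)|^p$ as $|u(x)-u(y)|^p$, and then control the resulting integral by the energy via uniform bounds on $|x-y|$ and $w(B_{x,y})$ over the chosen product region. The constants in both proofs deteriorate through the factor~$c_s^{-1}$ as $s\to 0$ or $s\to 1$, exactly as you note; the paper makes the same observation right after its proof.
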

\begin{proof}
  Let $B$ be a ball with radius~$r$ containing~$\Omega$.  For $v \in \Wspwz$ we have
  \begin{align*}
    \Jspw(v) &\gtrsim c_s \int_B \int_{3B\setminus 2B}
    \bigg(\frac{|v(x)-v(y)|}{|x-y|^s}\bigg)^p \frac{w(x)w(y)}{w(B_{x,y})}\,dx \,dy
    \\
    &\gtrsim c_s r^{-sp} \int_B \int_{3B \setminus 2B}
    |v(y)|^p \frac{w(x)w(y)}{w(3B)}\,dx\,dy
    \\
    &\gtrsim c_s r^{-sp} \frac{w(3B\setminus 2B)}{w(3B)} \int_B
    |v(y)|^p w(y)\,dy.
  \end{align*}
  This proves the claim.
\end{proof}
Note that the embedding constant in Lemma~\ref{lem:embedding-simple} deteriorates for~$s \nearrow 1$. This is in contrast to our \Poincare{} type inequality of Theorem~\ref{thm:poincare}, which however requires  more assumptions on our weight.

\subsection{Basic embedding}
\label{ssec:basic-embedding}

In this section we show the $C^\infty_c(\Omega) \subset \Wspwz$. For this inclusion it would be sufficient to assume that our weight is doubling. However, in view of our application we already now make the stronger assumption that $w$ is of \emph{Muckenhoupt class} $A_p$. We therefore recall now the class of Muckenhoupt weights, which is the assumption on our weights that we will use for the rest of this paper. For this let $p\in (1,\infty)$. A weight~$w$ on~$\RRn$ is said to be an $A_p$-weight (in short $w \in A_p$) if and only if
\begin{align}\label{eq:Ap}
  [w]_{A_p} \coloneqq \sup_{B \subset \RRn} \Bigg(\dashint_B w\,dx \bigg(\dashint_B w^{\frac{1}{1-p}}\,dx\bigg)^{p-1} \Bigg) < \infty.
\end{align}
The class $A_p$ contains exactly the weights that make the maximal operator continuous on $L^p_w(\RRn)$. A standard example is given by $\abs{x}^\gamma \in A_p$ for all $-n<\gamma <(p-1)n$.
Note that every $A_p$ weight $w$ is \emph{doubling}, i.e., that for some $c_w >0$ we have
\begin{align}
  \label{eq:doubling}
  w(2B) \leq c_w w(B) \qquad \text{for all balls~$B$},
\end{align}
see Remark \ref{rem:doubling}. Let us recall a few standard properties of Muckenhoupt weights, see~\cite{Gra14}. It is often useful, to define the dual weight $\sigma \coloneqq w^{\frac{1}{1-p}}$. Then $\sigma \in A_{p'}$, where $\frac 1p + \frac 1{p'}=1$ and $ [w]_{A_p} = [\sigma]_{A_{p'}}^{p-1}$. Moreover, $w$ and~$\sigma$ are doubling with doubling constants $c_w = 2^{np} [w]_{A_p}$ and $c_\sigma = 2^{np'} [w]_{A_p}^{1/(p-1)}$, respectively. For every ball $B$, \eqref{eq:Ap} and Jensen's inequality imply
\begin{align*}
  1 \leq \dashint_B w\,dx \bigg( \dashint_B \sigma\bigg)^{p-1} \leq [w]_{A_p}.
\end{align*}
Hence, for all $x,y \in \RRn$
\begin{align}
  \label{eq:w-vs-sigma-B}
  |B_{x,y}| \le w(B_{x,y})^{\frac 1p} \sigma(B_{x,y})^{\frac 1{p'}} \le [w]_{A_p} \abs{B_{x,y}} \eqsim [w]_{A_p}  \abs{x-y}^n,
\end{align}
where the implicit constant only depends on~$n$.
\begin{lemma}
  \label{lem:doubling-annulus}
  Let $p\in (1,\infty)$ and $w\in A_p$. Then
  for all $x,y\in \RRn$ and $r>0$  with $r \leq \abs{x-y} \leq 2r$ we have
  \begin{align}
    w(B_{x,y}) \leq 2^{np}[w]_{A_p} w(B_r(x)), \quad\text{and}\quad w(B_{2r}(x)) \leq 5^{np}[w]_{A_p} w(B_{x,y}).
  \end{align}
\end{lemma}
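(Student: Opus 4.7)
The plan is to use the elementary $A_p$-to-doubling consequence: if $E \subset B$ are balls, then $w(B) \leq [w]_{A_p}(|B|/|E|)^p w(E)$. This is the one-line Hölder estimate: since $|E| \leq w(E)^{1/p}\sigma(B)^{1/p'}$, raising to the $p$th power and inserting the $A_p$ bound $\sigma(B)^{p-1} \leq [w]_{A_p}|B|^p/w(B)$ gives the claim. This statement will produce exactly the constants $2^{np}[w]_{A_p}$ and $5^{np}[w]_{A_p}$ with no need to iterate doubling. The work is then purely geometric.

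First I set up notation. Write $m = \tfrac{x+y}{2}$ and $\rho = \tfrac{|x-y|}{2}$, so $B_{x,y} = B_\rho(m)$ and $|x-m| = \rho$. From the hypothesis $r \leq |x-y| \leq 2r$ I get $\tfrac{r}{2} \leq \rho \leq r$. These two relations are the only geometric inputs needed.

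For the first inequality, I verify $B_{x,y} \subset B_{2r}(x)$: any $z \in B_{x,y}$ satisfies $|z-x| \leq |z-m|+|m-x| < 2\rho \leq 2r$. Monotonicity of $w$ and the $A_p$ bound applied to the pair $B_r(x) \subset B_{2r}(x)$ (with volume ratio $2^n$) yield
\[
w(B_{x,y}) \;\leq\; w(B_{2r}(x)) \;\leq\; 2^{np}[w]_{A_p}\, w(B_r(x)).
\]

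For the second inequality, I show $B_{2r}(x) \subset 5B_{x,y}$: for $z \in B_{2r}(x)$, $|z-m| \leq |z-x|+|x-m| < 2r + \rho \leq 4\rho + \rho = 5\rho$, where I used $r \leq 2\rho$. Then applying the $A_p$ bound to $B_{x,y} \subset 5B_{x,y}$ (volume ratio $5^n$) gives
\[
w(B_{2r}(x)) \;\leq\; w(5B_{x,y}) \;\leq\; 5^{np}[w]_{A_p}\, w(B_{x,y}).
\]

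There is essentially no obstacle; the only point to keep track of is the volume ratio in the $A_p$ estimate, which directly yields the stated constants $2^{np}$ and $5^{np}$ without invoking iterated doubling (which would overshoot). The ingredients — the $A_p$-volume comparison and the two ball inclusions — are both immediate from the definitions.
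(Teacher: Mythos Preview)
Your proof is correct and follows essentially the same approach as the paper: both use the inclusions $B_{x,y}\subset B_{2r}(x)$ and $B_{2r}(x)\subset 5B_{x,y}$ together with the $A_p$ scaling bound $w(\lambda B)\leq \lambda^{np}[w]_{A_p}w(B)$. The only difference is that the paper cites this bound from Grafakos, whereas you supply the short H\"older-inequality derivation yourself.
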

\begin{proof}
  For every $\lambda >1$ and all balls $B$ we have $w(\lambda B)\leq \lambda^{np}[w]_{A_p}w(B)$, see \cite[Proposition 9.1.5]{Gramodern14}. With this we estimate
  \begin{alignat*}{3}
    w(B_{x,y}) &\leq w(B_{2r}(x)) &&\leq 2^{np}[w]_{A_p} w(B_r(x)),
    \\
    \text{and}\qquad w(B_{2r}(x)) &\leq w( 5B_{x,y}) &&\leq 5^{np}[w]_{A_p} w(B_{x,y}).
  \end{alignat*}
  This proves the claim.
\end{proof}
\begin{lemma}\label{lem:intweight}
  Let $p\in (1,\infty)$ and $w\in A_p$. Then for any $\alpha \in\RR$ with $\abs{\alpha} \leq \alpha_0$ and every ball $B_r(x)\subset \RRn$, we have
  \begin{alignat}{2}
    \label{eq:intweight1}
    \int_{B_r(x)} |x-y|^{\alpha} \frac{w(y)}{w(B_{x,y})}\,dy \eqsim \frac{r^\alpha}{\alpha} &\qquad&\text{for $\alpha>0$},
    \\
    \label{eq:intweight2}
    \int_{\RRn \setminus B_r(x)} |x-y|^\alpha \frac{w(y)}{w(B_{x,y})}\,dy \eqsim \frac{r^\alpha}{\abs{\alpha}} &\qquad&\text{for $\alpha<0$}.
  \end{alignat}
  The hidden constants depend continuously on $n$, $p$, $\alpha_0$, and linearly on $[w]_{A_p}$.
\end{lemma}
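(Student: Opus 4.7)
The plan is to handle both integrals by dyadic decomposition of the domain around $x$ and reduction to a single geometric series. I will write out the argument for \eqref{eq:intweight1} in detail; \eqref{eq:intweight2} is analogous with the roles of the radii reversed.

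First, for fixed $x \in \RRn$ and $r>0$, decompose
\[
B_r(x) = \bigsqcup_{k=0}^\infty A_k, \qquad A_k := B_{2^{-k}r}(x) \setminus B_{2^{-k-1}r}(x).
\]
For every $y \in A_k$ we have $|x-y| \eqsim 2^{-k}r$. By Lemma~\ref{lem:doubling-annulus} (applied with the radius $2^{-k-1}r$ and the points $x,y$, which satisfy $2^{-k-1}r \le |x-y| \le 2^{-k}r$), we also get
\[
w(B_{x,y}) \eqsim w\bigl(B_{2^{-k}r}(x)\bigr),
\]
with constants depending linearly on $[w]_{A_p}$. Thus, for $\alpha \in \RR$,
\[
\int_{A_k} |x-y|^\alpha \frac{w(y)}{w(B_{x,y})}\,dy \eqsim (2^{-k}r)^\alpha \, \frac{w(A_k)}{w\bigl(B_{2^{-k}r}(x)\bigr)}.
\]

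For the upper bound in \eqref{eq:intweight1}, I use the trivial estimate $w(A_k) \le w(B_{2^{-k}r}(x))$ on each annulus. For the lower bound, I invoke the $A_p$-condition in its equivalent form
\[
\Big(\tfrac{|E|}{|B|}\Big)^p \le [w]_{A_p}\, \tfrac{w(E)}{w(B)} \qquad \text{for every measurable } E \subset B,
\]
applied to $E = A_k$ inside $B = B_{2^{-k}r}(x)$. Since $|A_k|/|B_{2^{-k}r}(x)| = 1 - 2^{-n}$, this yields $w(A_k)/w(B_{2^{-k}r}(x)) \gtrsim 1/[w]_{A_p}$, which is a constant depending linearly on $[w]_{A_p}$ in the sense required. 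Summing over~$k$,
\[
\int_{B_r(x)} |x-y|^\alpha \frac{w(y)}{w(B_{x,y})}\,dy \eqsim \sum_{k=0}^\infty (2^{-k}r)^\alpha = \frac{r^\alpha}{1 - 2^{-\alpha}}, \qquad \alpha>0,
\]
the series being a convergent geometric series precisely because $\alpha>0$.

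The final step is the elementary observation that for $0 < \alpha \le \alpha_0$ one has $1 - 2^{-\alpha} = 1 - e^{-\alpha \ln 2} \eqsim \alpha$, with constants depending continuously on $\alpha_0$; this converts $r^\alpha/(1-2^{-\alpha})$ into $r^\alpha/\alpha$ and establishes \eqref{eq:intweight1}.

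For \eqref{eq:intweight2} I repeat the same argument with the exterior dyadic decomposition $\RRn \setminus B_r(x) = \bigsqcup_{k=0}^\infty \bigl(B_{2^{k+1}r}(x) \setminus B_{2^k r}(x)\bigr)$; on each annulus $|x-y| \eqsim 2^k r$ and $w(B_{x,y}) \eqsim w(B_{2^k r}(x))$ by Lemma~\ref{lem:doubling-annulus}, and the analogous $A_p$-set estimate produces matching upper and lower bounds $\sum_{k\ge 0} (2^k r)^\alpha = r^\alpha/(1-2^\alpha)$, which is $\eqsim r^\alpha/|\alpha|$ for $-\alpha_0 \le \alpha < 0$. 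The only mild subtlety in the whole argument is the uniformity of $1/(1-2^{-|\alpha|}) \eqsim 1/|\alpha|$ as $\alpha \to 0$, which is the reason the bound $|\alpha| \le \alpha_0$ enters the statement; everything else is a clean decoupling of the $|x-y|^\alpha$ factor from the weight ratio via the doubling and $A_p$ properties.
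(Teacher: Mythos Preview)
Your proof is correct and follows essentially the same strategy as the paper's: dyadic annular decomposition, comparison of $w(B_{x,y})$ with $w(B_{2^{-k}r}(x))$ via Lemma~\ref{lem:doubling-annulus}, and summation of the resulting geometric series together with $1-2^{-|\alpha|}\eqsim|\alpha|$.

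The one substantive difference is in the lower bound. You control $w(A_k)/w(B_{2^{-k}r}(x))$ via the $A_p$-density inequality $(|E|/|B|)^p\le[w]_{A_p}\,w(E)/w(B)$; combined with the $[w]_{A_p}$ factor already incurred from Lemma~\ref{lem:doubling-annulus}, this yields a \emph{quadratic} dependence on $[w]_{A_p}$ in the lower estimate, not the linear one claimed in the statement. The paper avoids this stacking by a slightly different trick: it places a ball $\widetilde{B}_j$ of maximal radius inside the annulus $2^{-j}B\setminus 2^{-j-1}B$, observes $B_{x,y}\subset 7\widetilde{B}_j$ so that $w(B_{x,y})\le 7^{np}[w]_{A_p}w(\widetilde{B}_j)$, and then uses the trivial bound $w(A_j)/w(\widetilde{B}_j)\ge 1$. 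This costs only a single factor of $[w]_{A_p}$. If you care about the precise dependence stated in the lemma, adopt this device; otherwise your argument is complete.
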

\begin{proof}
  Let $B = B_r(x)$. Splitting $B_r(x)$ into annuli, we estimate with Lemma~\ref{lem:doubling-annulus} for~$\alpha>0$.
  \begin{align*}
    \int_{B_r(x)} |x-y|^{\alpha} \frac{w(y)}{w(B_{x,y})}\,dy
    &\lesssim [w]_{A_p}\sum_{j \geq 0} (2^{-j\alpha}r^\alpha) \frac{w(2^{-j}B \setminus 2^{-j-1} B)}{w(2^{-j} B)} \leq \frac{[w]_{A_p}r^\alpha}{1-2^{-\alpha}}.
  \end{align*}
  For the reverse estimate we have
  \begin{align}\label{eq:proof_integrate}
    \int_{B_r(x)} |x-y|^{\alpha} \frac{w(y)}{w(B_{x,y})}\,dy
    &\geq \sum_{j \geq 0} 2^{-(j+1)\alpha}r^\alpha\!\!\! \int_{2^{-j}B\setminus 2^{-j-1}B}\frac{w(y)}{w(B_{x,y})}\,dy
  \end{align}
  Let $\widetilde{B}_j$ be any ball of maximal radius contained in $2^{-j}B\setminus 2^{-j-1}B$. Then we have $B_{x,y}\subset 7\widetilde{B}_j$. Thus
  \begin{align*}
    w(B_{x,y})\leq w(7\widetilde{B}_j)\leq 7^{np}[w]_{A_p}w(\widetilde{B}_j).
  \end{align*}
  Combining this with \eqref{eq:proof_integrate}, we arrive at
  \begin{align*}
    [w]_{A_p}\int_{B_r(x)} |x-y|^{\alpha} \frac{w(y)}{w(B_{x,y})}\,dy &\geq 7^{-np} \sum_{j \geq 0} 2^{-(j+1)\alpha}r^\alpha \frac{w(2^{-j}B\setminus 2^{-j-1}B)}{w(\widetilde{B}_j)}
    \\
    & \geq  7^{-np} \frac{1}{2^{\alpha}}\frac{r^{\alpha}}{1-2^{-\alpha}}
  \end{align*}
  Note that $\abs{1- 2^{-t}} \eqsim t$ for all $t \in [0,\alpha_0]$ with constant depending on~$\alpha_0$. Hence, $\frac{1}{1-2^{-\abs{\alpha}}} \eqsim \frac{1}{\abs{\alpha}}$.
  This proves the claim for $\alpha >0$. The proof for $\alpha <0$ follows by the same method.
\end{proof}

\begin{proposition}
  Let $p\in (1,\infty)$, $w\in A_p$ and~$\Omega \subset \RRn$ be open and bounded. Then
  $C^\infty_c(\RRn) \subset W^{s,p}_w(\RRn)$
  and $C^\infty_c(\Omega) \embedding \Wspwz$.
\end{proposition}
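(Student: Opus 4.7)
The plan is to estimate $\mathcal{J}^s_{p,w}(\varphi)$ directly for $\varphi \in C^\infty_c(\RRn)$ using the elementary Lipschitz/boundedness bound
\[
|\varphi(x)-\varphi(y)| \le \min\bigl(2\|\varphi\|_\infty,\ \|\nabla\varphi\|_\infty |x-y|\bigr),
\]
the upper bound on $k(x,y)$ from \eqref{eq:cond-k}, and the weight integrals of Lemma~\ref{lem:intweight}. Fix $R>0$ so that $\operatorname{supp}(\varphi) \subset B_R$, and split $\RRn \times \RRn$ into the near-diagonal region $\{|x-y| \le 1\}$ and its complement.

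On the near-diagonal region, apply the Lipschitz bound so the integrand is dominated by $\|\nabla\varphi\|_\infty^p |x-y|^{p(1-s)}k(x,y)$. Since the integrand vanishes unless at least one of $x,y$ lies in $B_R$, the restriction $|x-y|\le 1$ forces the other into $B_{R+1}$. Using \eqref{eq:cond-k} and Lemma~\ref{lem:intweight} with $\alpha = p(1-s)>0$ gives
\[
\int_{B_1(x)} |x-y|^{p(1-s)}\frac{w(y)}{w(B_{x,y})}\,dy \lesssim \frac{1}{p(1-s)},
\]
and the factor $c_s = s(1-s)$ cancels the $(1-s)^{-1}$, contributing $\lesssim s\,\|\nabla\varphi\|_\infty^p\, w(B_{R+1})$. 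On the far region, use the sup bound so the integrand is dominated by $(2\|\varphi\|_\infty)^p |x-y|^{-sp}k(x,y)$; by symmetry we may assume $x\in B_R$. Lemma~\ref{lem:intweight} with $\alpha=-sp<0$ gives $\int_{\RRn\setminus B_1(x)} |x-y|^{-sp} \frac{w(y)}{w(B_{x,y})}\,dy \lesssim \frac{1}{sp}$, and the $c_s$ factor absorbs the $s^{-1}$ blow-up, producing $\lesssim (1-s)\,\|\varphi\|_\infty^p\, w(B_R)$.

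Summing the two contributions yields
\[
|\varphi|_{\Wspw}^p \lesssim \bigl(\|\nabla\varphi\|_\infty^p + \|\varphi\|_\infty^p\bigr)\, w(B_{R+1}),
\]
which establishes $C^\infty_c(\RRn) \subset \Wspw$. For the continuous embedding $C^\infty_c(\Omega) \embedding \Wspwz$ one chooses $R$ with $\Omega \subset B_R$; since any $\varphi \in C^\infty_c(\Omega)$ automatically satisfies $\varphi|_{\Omega^c}=0$ and its $C^1$-norm controls the right-hand side uniformly for supports in a fixed compact $K \Subset \Omega$, the same estimate implies that $C^1$-convergence on $K$ yields convergence in $\Wspwz$, giving the inductive-limit continuity. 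There is no real obstacle beyond bookkeeping; the essential point — and the reason for the normalizing constant $c_s$ in the definition of $\Jspw$ — is the cancellation between $c_s$ and the divergent constants in Lemma~\ref{lem:intweight}, which makes the resulting bound stable both as $s\nearrow 1$ and (up to the $\|\varphi\|_\infty^p$ term) as $s\searrow 0$.
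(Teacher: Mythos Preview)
Your proof is correct and follows essentially the same approach as the paper: both use the Lipschitz/sup bound on $\varphi(x)-\varphi(y)$, split into a near-diagonal and a far region, and invoke Lemma~\ref{lem:intweight} with $\alpha=p(1-s)$ and $\alpha=-sp$ respectively, then apply the resulting estimate to differences $\varphi_m-\varphi$ for the continuity of the embedding. The only cosmetic difference is the splitting scale: the paper splits at the radius~$r$ of a ball containing the support (getting factors $r^{p-sp}$ and $r^{-sp}$), whereas you split at unit scale $|x-y|=1$ and track the $c_s$-cancellation more explicitly.
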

\begin{proof}
  Let $v \in C^\infty_c(\RRn)$. Choose a ball~$B$ with radius~$r$ and $\support v \subset B$. Then $\abs{v(x)-v(y)} \leq \abs{x-y}\, \norm{\nabla v}_\infty$ implies
  \begin{align*}
    \Jspw(v)
    &= c_s
    \int_{\RRn} \int_{\RRn} \bigg(\frac{|v(x)-v(y)|}{|x-y|^s}\bigg)^p\frac{w(x)w(y)}{w(B_{x,y})} \,dx  \,dy
    \\
    &\leq c_s \norm{\nabla v}_\infty^p \int_{2B} \int_{2B}  \abs{x-y}^{p-sp}\frac{w(x)}{w(B_{x,y})}\,dx\, w(y)\,dy
    \\
    &\quad + 2  \norm{v}_\infty^p \int_{2B} \int_{(2B)^c} \abs{x-y}^{-sp}\frac{w(x)}{w(B_{x,y})}\,dx \,w(y)\,dy.
  \end{align*}
  By Lemma~\ref{lem:intweight} we obtain
  \begin{align*}
    \Jspw(v)
    &\lesssim \norm{\nabla v}_\infty^p r^{p-sp} w(2B)
    + \norm{v}_\infty^p r^{-sp} w(2B) < \infty.
  \end{align*}
  This proves~$C^\infty_c(\RRn) \subset \Wspw$. Hence, we also have $C^\infty_c(\Omega) \subset \Wspwz$. Suppose now that $v_m \to v$ in $C^\infty_c(\RRn)$. Then by the calculations above
  \begin{align*}
    \Jspw(v_m-v)
    &\lesssim \norm{\nabla (v_m-v)}_\infty^p r^{p-sp} w(2B)
    + \norm{v_m-v}_\infty^p r^{-sp} w(2B) \xrightarrow{m} 0,
  \end{align*}
  which proves $C^\infty_c(\Omega) \embedding \Wspwz$.
\end{proof}

\subsection{\texorpdfstring{The limit as $s \to 0$ and $s \to 1$}{The limit as s -> 0 and s -> 1}}
\label{ssec:limit-s}

In this section we investigate what happens if the order of fractional differentiability~$s$ converges to $0$ or $1$. We see that we recover the energies of the weighted Lebesgue space~$L^p_w$ and of the weighted Sobolev space~$W^{1,p}_w$, respectively. This justifies the notation~$\Wspw$ and that our model can be seen as the natural intermediate space.
For the case of fractional Sobolev spaces, i.e., for $w=1$, corresponding results were shown in \cite{BourgainBrezisMiro2001,MazyaSha02,FoghemKassmannVoigt2020}.
\begin{theorem}
  \label{thm:limit-s}
  Let $p\in (1,\infty)$, $w\in A_p$, and $v \in C^\infty_c(\RRn)$. Then
  \begin{alignat*}{2}
    \liminf_{s \searrow 0} \Jspw(v) &\eqsim  \limsup_{s \searrow 0} \Jspw(v) &&\eqsim \int_{\RRn} \abs{v(z)}^p w(x)\,dx,
    \\
    \liminf_{s \nearrow 1} \Jspw(v) &\eqsim     \limsup_{s \nearrow 1} \Jspw(v) & &\eqsim \int_{\RRn} \abs{\nabla v(z)}^p w(x)\,dx.
  \end{alignat*}
  The implicit constants are only dependent on $n$, $p$, and $[w]_{A_p}$.
\end{theorem}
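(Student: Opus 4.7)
The plan is to pass to the shift variables $(y,h)$ with $x=y+h$, so that $|x-y|=|h|$ and $B_{x,y}=B_{y,y+h}$, and then to split the $h$-integral at a threshold $\delta>0$ (I take $\delta=R$ for the $s\searrow 0$ claim, where $\mathrm{supp}\,v\subset B_R$, and $\delta$ small for $s\nearrow 1$). The asymptotics are governed by Lemma~\ref{lem:intweight}: combined with $c_s=s(1-s)$, it produces $c_s\cdot\delta^{p(1-s)}/(p(1-s))=s\delta^{p(1-s)}/p$ on $\{|h|\le\delta\}$ (with $\alpha=p(1-s)>0$) and $c_s\cdot\delta^{-sp}/(sp)=(1-s)\delta^{-sp}/p$ on $\{|h|>\delta\}$ (with $\alpha=-sp<0$). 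In each of the four sub-claims exactly one of these contributions survives while the other vanishes in the limit.

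For $s\searrow 0$, the near part $\{|h|\le R\}$ is handled by $|v(y+h)-v(y)|\le\|\nabla v\|_\infty|h|$ and yields an $O(s)$-term that vanishes. The far part $\{|h|>R\}$ is handled with $|v(y+h)-v(y)|^p\le 2^{p-1}(|v(y+h)|^p+|v(y)|^p)$, Fubini, and Lemma~\ref{lem:intweight} with $\alpha=-sp$, giving the upper bound $((1-s)/p)R^{-sp}\int|v|^p w$. For the matching lower bound I restrict $y\in B_R$ and $|h|\ge 2R$, so that $v(y+h)=0$ and $|v(y+h)-v(y)|^p=|v(y)|^p$; the remaining $h$-integral is exactly the regime of Lemma~\ref{lem:intweight} with $\alpha=-sp$ and supplies the reverse inequality.

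For $s\nearrow 1$ the roles swap, and the large-$h$ part vanishes because $(1-s)\delta^{-sp}\to 0$. On $\{|h|\le\delta\}$ the second-order Taylor expansion gives
\[|v(y+h)-v(y)|^p\le 2^{p-1}|\nabla v(y)\cdot h|^p+C|h|^{2p},\]
with $C$ depending on $\|v\|_{C^2}$. The main term, bounded crudely by $|h|^p|\nabla v(y)|^p$ and plugged into Lemma~\ref{lem:intweight} with $\alpha=p(1-s)$, contributes $(s/p)\delta^{p(1-s)}\int|\nabla v|^p w$; the Taylor remainder uses $\alpha=p(2-s)$ and is $O(\delta^p)$ after $s\to 1$. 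Taking $s\nearrow 1$ first and $\delta\searrow 0$ second yields $\limsup_{s\to 1}\Jspw(v)\lesssim\int|\nabla v|^p w$.

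The matching lower bound for $s\nearrow 1$ is the genuine obstacle. Starting from the reverse bound $|v(y+h)-v(y)|^p\ge 2^{1-p}|\nabla v(y)\cdot h|^p-C|h|^{2p}$ on $|h|\le\delta$, I restrict $h$ to the cone $C_y=\{h\,:\,(h/|h|)\cdot e_y\ge 1/2\}$ with $e_y=\nabla v(y)/|\nabla v(y)|$, on which $|\nabla v(y)\cdot h|^p\ge 2^{-p}|h|^p|\nabla v(y)|^p$. It remains to show that Lemma~\ref{lem:intweight} survives the cone restriction up to a constant. I decompose $\{h\in C_y\,:\,|h|\le\delta\}$ into dyadic annuli $A_j$ of inner radius $\sim 2^{-j}\delta$; by doubling, $w(B_{y,y+h})\eqsim w(B_{2^{-j}\delta}(y))$ uniformly for $h\in A_j$, so the contribution of $A_j$ reduces to $w(y+(C_y\cap A_j))/w(B_{2^{-j}\delta}(y))$. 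The critical input is the standard $A_p$ quasi-uniform estimate
\[\frac{w(E)}{w(B)}\ge\frac{1}{[w]_{A_p}}\Bigl(\frac{|E|}{|B|}\Bigr)^{\!p}\quad\text{for every measurable }E\subset B,\]
which follows directly from Hölder's inequality and the definition of $A_p$; applied with $E=y+(C_y\cap A_j)\subset B_{2^{-j}\delta}(y)$, it bounds each dyadic ratio below by a constant depending only on $n$, $p$, $[w]_{A_p}$, since $|C_y\cap A_j|/|B_{2^{-j}\delta}(y)|$ is bounded below by a positive number depending only on $n$. Summing the resulting geometric series in $j$ with ratio $2^{-p(1-s)}$ recovers the factor $1/(p(1-s))$, matches the upper-bound asymptotics, and produces the lower bound $\gtrsim(s/p)\int|\nabla v|^p w$, closing the comparability.
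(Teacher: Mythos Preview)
Your proposal is correct and follows essentially the same route as the paper: change to shift variables, split near/far via Lemma~\ref{lem:intweight}, use Taylor expansion for the $s\nearrow 1$ near part, and handle the lower bound by restricting to a cone/sector where $|\nabla v(y)\cdot h|\gtrsim |h|\,|\nabla v(y)|$. In fact you supply more detail than the paper does at the key step: the paper simply asserts that the sector restriction ``requires an argument similar to Lemma~\ref{lem:intweight} but for sectors,'' whereas you prove it via the $A_p$ density bound $w(E)/w(B)\ge [w]_{A_p}^{-1}(|E|/|B|)^p$ applied to dyadic cone-annuli. Two small remarks: your Taylor remainder $C|h|^{2p}$ is cruder than the paper's $|h|^{p+1}$ (obtained from $|a^p-b^p|\lesssim |a-b|(a^{p-1}+b^{p-1})$), but both suffice; and your two-step limit ``$s\nearrow 1$ first, $\delta\searrow 0$ second'' is unnecessary, since the remainder already carries a factor $c_s\to 0$ for any fixed $\delta$.
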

\begin{proof}
  Recall that the normalizing constant of $\Jspw$ is given by $c_s=s(1-s)$. Fix $v \in C^\infty_c(\RRn)$. Choose~$r \geq 1$ such that $\support v \subset B_r(0)\coloneqq B$.
  We begin with $s \nearrow 1$. We have $ \mathcal{J}^{s}_{p,w}(v) \eqsim \mathrm{I}_1 + \mathrm{I}_2 + \mathrm{I}_3$, where
  \begin{align*}
    \mathrm{I}_1 &\coloneqq
                   c_s \int_{\RRn} \int_{B_1(x)} \bigg(\frac{|v(x)-v(y)|}{|x-y|^s}\bigg)^p\frac{w(x)w(y)}{w(B_{x,y})}\,dy\,dx,
    \\
    \mathrm{I}_2 &\coloneqq
                  c_s \int_{2B} \int_{\RRn \setminus B_1(x)} \bigg(\frac{|v(x)-v(y)|}{|x-y|^s}\bigg)^p\frac{w(x)w(y)}{w(B_{x,y})}\, dy\,dx,
    \\
    \mathrm{I}_3 &\coloneqq
                  c_s \int_{\RRn \setminus 2B} \int_{B \setminus B_1(x)} \bigg(\frac{|v(y)|}{|x-y|^s}\bigg)^p\frac{w(x)w(y)}{w(B_{x,y})}\,dy\,dx.
  \end{align*}
  We first prove that $\mathrm{I}_2, \mathrm{I}_3 \to 0$ for $s \nearrow 1$. We estimate with Lemma~\ref{lem:intweight}
  \begin{align*}
    \mathrm{I}_2
    &\le
      (1-s) (2\norm{v}_\infty)^p \int_{2B} w(x) \bigg(\int_{\RRn \setminus B_1(x)} \abs{x-y}^{-sp} \frac{w(y)}{w(B_{x,y})}\,dy\bigg)\,dx
      \\
    &\eqsim
      (1-s)\frac{1}{sp} \norm{v}_\infty^p w(2B) \xrightarrow{s \to 1} 0
  \end{align*}
  and
  \begin{align*}
    \mathrm{I}_3
    &\leq
     (1-s) \|v\|_\infty^p  \int_{B} \int_{\RRn \setminus 2B} |x-y|^{-sp}\frac{w(x)}{w(B_{x,y})} \,dx\, w(y)\,dy
      \\
    &\eqsim
      (1-s)\norm{v}_\infty^p \frac{r^{-sp}}{sp} w(B)  \xrightarrow{s \to 1} 0.
  \end{align*}
  It remains to calculate the limit of~$\mathrm{I}_1$. We obtain
  \begin{align*}
    \mathrm{I}_1 &= c_s \int_{B_{r+1}(0)} \int_{B_1(0)} \bigg( \frac{\abs{v(x+h) - v(x)}}{\abs{h}^s} \bigg)^p \frac{w(x) w(x+h)}{w(B_{x,x+h})} \,dh \,dx.
  \end{align*}
  Let us put
  \begin{align*}
    \mathrm{I}_{1,1} &\coloneqq
    c_s \int_{B_{r+1}(0)} \int_{B_1(0)} \bigg( \frac{\abs{\skp{\nabla v(x)}{h}}}{\abs{h}^s} \bigg)^p \frac{w(x) w(x+h)}{w(B_{x,x+h})} \,dh \,dx,
  \end{align*}
  and define $\mathrm{I}_{1,2}\coloneqq \mathrm{I}_1 - \mathrm{I}_{1,1}$. By Talyor's theorem
  \begin{align*}
    \bigabs{v(x+h) - v(x) - \skp{\nabla v(x)}{h}} &\leq \norm{\nabla^2 v}_\infty \abs{h}^2.
  \end{align*}
  Using this and the elementary inequality $\abs{a^p-b^p}\leq p \abs{a-b}(a^{p-1}+b^{p-1})$ for $a,b\geq 0$ and $p> 1$ we estimate a term appearing in the integrand of $\mathrm{I}_{1,2}$:
  \begin{align*}
    &\bigabs{\abs{v(x+h)-v(x)}^p-\abs{\skp{\nabla v(x)}{h}}^p}
    \\
    &\qquad\lesssim \bigabs{v(x+h) - v(x) - \skp{\nabla v(x)}{h}} \left(\abs{v(x+h)-v(x)}+\abs{\skp{\nabla v(x)}{h}}\right)^{p-1}
    \\
    &\qquad\lesssim \norm{\nabla^2 v}_{L^\infty}\abs{h}^2 \norm{\nabla v}_{L^\infty}^{p-1}\abs{h}^{p-1}.
  \end{align*}
  Thus
  \begin{align*}
    \mathrm{I}_{1,2} &\lesssim
    c_s \int_{B_{r+1}(0)} \int_{B_1(0)} \frac{\norm{\nabla^2 v}_\infty \norm{\nabla v}_\infty^{p-1} \abs{h}^{p+1} }{\abs{h}^{sp}} \frac{w(x) w(x+h)}{w(B_{x,x+h})} \,dh \,dx.
  \end{align*}
  With Lemma~\ref{lem:intweight} and $\lim_{s \to 1} c_s =0$ we estimate
  \begin{align*}
    \mathrm{I}_{1,2} &\lesssim c_s \norm{\nabla^2 v}_{L^\infty} \norm{\nabla v}_{L^\infty}^{p-1}
    \int_{B_{r+1}(0)} \int_{B_1(0)} \abs{h}^{p+1-sp} \frac{ w(x+h)}{w(B_{x,x+h})} \,dh\, w(x) \,dx
    \\
    &\lesssim \frac{c_s}{p+1-sp} \norm{\nabla^2 v}_{L^\infty} \norm{\nabla v}_{L^\infty}^{p-1}  w(B_{r+1}(0)) \xrightarrow{s \to 1} 0.
  \end{align*}
  Now, with Lemma~\ref{lem:doubling-annulus}
  \begin{align*}
    \mathrm{I}_{1,1} &\eqsim
    c_s \int_{B_{r+1}(0)} \underbrace{\int_{B_1(0)} \bigg( \frac{\abs{\skp{\nabla v(x)}{h}}}{\abs{h}^s} \bigg)^p \frac{w(x+h)}{w(B_{\abs{h}}(x))} \,dh}_{\coloneqq g_s(x)} \,w(x) \,dx.
  \end{align*}
  Again with Lemma~\ref{lem:doubling-annulus} we estimate
  \begin{align*}
    \abs{g_s(x)} &\lesssim \abs{\nabla v(x)}^p \int_{B_1(0)} \abs{h}^{(1-s)p} \frac{w(x+h)}{w(B_\abs{h}(x))} \,dh \eqsim \frac{1}{(1-s)p} \abs{\nabla v(x)}^p.
  \end{align*}
  Let $S$ denote the sector $S \coloneqq \set{h \,:\, \skp{\nabla v(x)}{h} \geq \frac 12 \abs{\nabla v(x)} \abs{h}}$. Then
  \begin{align*}
    \abs{g_s(x)} &\gtrsim \abs{\nabla v(x)}^p \int_{S \cap B_1(0)} \abs{h}^{(1-s)p} \frac{w(x+h)}{w(B_\abs{h}(x))} \,dh \eqsim \frac{1}{(1-s)p} \abs{\nabla v(x)}^p,
  \end{align*}
  where the last step requires an argument similar to Lemma~\ref{lem:intweight} but for sectors. Overall, we have
  \begin{align*}
    \mathrm{I}_{1,1} &\eqsim
    \frac{c_s}{(1-s)p} \int_{B_{r+1}(0)} \abs{\nabla v(x)}^p w(x)\,dx  \xrightarrow{s \to 1}  \frac 1p\!\!\! \int_{B_{r+1}(0)} \abs{\nabla v(x)}^p w(x)\,dx .
  \end{align*}
  Combing all estimates proves the claim for $s \nearrow 1$.

  Let us consider the case $s \searrow 0$. Then we have $ \mathcal{J}^{s}_{p,w}(v) \eqsim \mathrm{II}_1 + \mathrm{II}_2$, where
  \begin{align*}
    \mathrm{II}_1 &\coloneqq
    c_s \int_{2B} \int_{2B} \bigg(\frac{|v(x)-v(y)|}{|x-y|^s}\bigg)^p\frac{w(x)w(y)}{w(B_{x,y})}\,dy\,dx,
    \\
    \mathrm{II}_2 &\coloneqq
    c_s \int_{B} \int_{(2B)^c}\bigg(\frac{|v(y)|}{|x-y|^s}\bigg)^p\frac{w(x)w(y)}{w(B_{x,y})}\, dy\,dx.
  \end{align*}
  $\mathrm{II}_1$ vanishes as $s\searrow 0$. Indeed, with Lemma~\ref{lem:intweight} we can estimate
  \begin{align*}
    \mathrm{II}_1 &\lesssim c_s \norm{\nabla v}_\infty^p \int_{2B} \int_{2B} \abs{x-y}^{(1-s)p} \frac{w(x)}{w(B_{x,y})}\,dx \,w(y)\,dy
    \\
    &\lesssim c_s \norm{\nabla v}_\infty^p \frac{r^{(1-s)p}}{(1-s)p} w(2B) \xrightarrow{s\to 0} 0.
  \end{align*}
  Using a modified version of Lemma~\ref{lem:intweight} we get
  \begin{align*}
    \mathrm{II}_2 &\eqsim
    c_s \int_{B} \abs{v(y)}^p w(y) \int_{(2B)^c} |x-y|^{-sp}\frac{w(x)}{w(B_{x,y})}\,dx\,dy
    \\
    &\eqsim
    c_s \int_{B} \abs{v(y)}^p w(y) \frac{r^{-sp}}{sp}\,dy
    \;\;\xrightarrow{s\to 0} \;\;
     \frac 1p \int_{B} \abs{v(y)}^p w(y) \,dy.
   \end{align*}
   This proves the case~$s \searrow 0$.
\end{proof}

\begin{remark}\label{rem:doubling}
  All statements from Section \ref{ssec:basic-embedding} and Section \ref{ssec:limit-s} remain valid if the Muckenhoupt condition $w\in A_p$ is replaced by the weaker assumption that $w$ is doubling. In this case the dependence of the (implicit) constants would no longer be on $[w]_{A_p}$, but instead on the doubling constant $c_w$ (not necessarily in the same way). Indeed, $w\in A_p$ is used explicitly only in the proof of Lemma \ref{lem:doubling-annulus} which also holds with doubling weights (but with constants depending on $c_w$ in a nonlinear fashion).
\end{remark}

\subsection{Poincaré inequality}\label{sec:poincare}
In this section we establish a Poincaré type estimate for our function spaces. Our estimate is stable when $s\nearrow 1$.
For $0<\alpha<n$ the Riesz potential of~$f\,:\, \RRn \to \RR$ is defined as
\begin{align*}
  \mathcal{I}_\alpha f(x) \coloneqq \int_{\RRn} f(y) \abs{x-y}^{\alpha-n}\,dy.
\end{align*}
If $f$ is defined only on some $\Omega\subset \RRn$, we denote $\mathcal{I}_\alpha f(x)=\mathcal{I}_{\alpha}\bar f (x)$, where $\bar f$ is the zero extension of $f$ to $\RRn$.

To avoid proving similar estimates for the mollifiers now and for \Poincare{}'s inequality later, we prove two auxiliary lemmas. We first prove an elementary estimate for the Riesz potential.

\begin{lemma}\label{lem:riesz-weighted-basic}
  Let $\alpha\in(0,n)$, $1<p<\infty$, $w\in A_p$ and let $B_r\subset \RRn$ be a ball with radius~$r$. Then for every $f\in L_w^p(B_r)$ we have
  \begin{align*}
    \bigg(\frac{1}{w(B_r)}\int_{B_r} \!\! \big(\mathcal{I}_\alpha|f|\big)^pw\,dx\! \bigg)^{\frac 1p}  \! \lesssim \frac{r^\alpha}{\alpha} \left(\frac{1}{w(B_r)}\int_{B_r}\!|f|^pw\,dx\!\right)^{\!\frac{1}{p}},
  \end{align*}
  where the hidden constant depends continuously on $p$ and $n$ and $[w]_{A_p}$.
\end{lemma}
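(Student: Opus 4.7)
The plan is to split the Riesz kernel symmetrically between $w$ and the dual weight $\sigma = w^{1/(1-p)} \in A_{p'}$ and then apply Lemma~\ref{lem:intweight} twice, once to each weight. By \eqref{eq:w-vs-sigma-B} we have $|x-y|^n \eqsim |B_{x,y}| \eqsim w(B_{x,y})^{1/p}\sigma(B_{x,y})^{1/p'}$, so
\begin{align*}
|x-y|^{\alpha-n} \eqsim \left(\frac{|x-y|^\alpha}{w(B_{x,y})}\right)^{\!1/p} \left(\frac{|x-y|^\alpha}{\sigma(B_{x,y})}\right)^{\!1/p'}.
\end{align*}
Since the zero-extension of $f$ vanishes outside $B_r$, the integral defining $\mathcal{I}_\alpha|f|(x)$ is effectively over $y \in B_r$, and for $x \in B_r$ we have $|x-y|\leq 2r$.

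Using the identity $w^{1/p}\sigma^{1/p'} \equiv 1$ (which follows directly from the definition of $\sigma$), I would write $|f(y)| = (|f(y)|w(y)^{1/p})\,\sigma(y)^{1/p'}$, distribute the two factors from the kernel splitting accordingly, and apply Hölder's inequality in $y$ with exponents $p,p'$ to obtain, for each $x \in B_r$,
\begin{align*}
\mathcal{I}_\alpha|f|(x) \lesssim \left(\int_{B_r} |f|^p w\, \frac{|x-y|^\alpha}{w(B_{x,y})}\, dy\right)^{\!1/p} \!\left(\int_{B_{2r}(x)} \frac{|x-y|^\alpha \sigma(y)}{\sigma(B_{x,y})}\, dy\right)^{\!1/p'}\!\!.
\end{align*}
The second factor is bounded by $(r^\alpha/\alpha)^{1/p'}$ by Lemma~\ref{lem:intweight} applied on $B_{2r}(x)$ with the dual weight $\sigma \in A_{p'}$.

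Raising both sides to the $p$-th power, integrating over $x \in B_r$ against $w(x)\,dx$, swapping the order of integration by Fubini, and applying Lemma~\ref{lem:intweight} once more --- this time in the $x$-variable, for fixed $y \in B_r$, on $B_r \subset B_{2r}(y)$ with the weight $w \in A_p$ --- I obtain
\begin{align*}
\int_{B_r} (\mathcal{I}_\alpha|f|)^p w\,dx \lesssim \left(\frac{r^\alpha}{\alpha}\right)^{\!p/p'+1} \int_{B_r} |f|^p w\,dy = \left(\frac{r^\alpha}{\alpha}\right)^{\!p} \int_{B_r} |f|^p w\,dy,
\end{align*}
since $p/p'+1 = p$. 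Dividing by $w(B_r)$ and taking the $p$-th root yields the claim. The main obstacle is arranging the Hölder decomposition so that each side is dimensionally compatible with Lemma~\ref{lem:intweight}; once the symmetric splitting of $|x-y|^{\alpha-n}$ afforded by \eqref{eq:w-vs-sigma-B} is in place, the rest is mechanical. Continuous dependence of the constant on $[w]_{A_p}$ is inherited from~\eqref{eq:w-vs-sigma-B} and Lemma~\ref{lem:intweight}, via the identity $[\sigma]_{A_{p'}} = [w]_{A_p}^{1/(p-1)}$.
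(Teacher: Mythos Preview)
Your proof is correct and follows essentially the same approach as the paper's own proof: both split the Riesz kernel via \eqref{eq:w-vs-sigma-B}, insert $w^{1/p}\sigma^{1/p'}=1$, apply H\"older in $y$, estimate the $\sigma$-factor by Lemma~\ref{lem:intweight}, and then use Fubini together with a second application of Lemma~\ref{lem:intweight} for the $w$-factor. The only cosmetic difference is that the paper keeps the inner integrals over $B_r$ and implicitly enlarges to $B_{2r}(\cdot)$ when invoking Lemma~\ref{lem:intweight}, whereas you make this enlargement explicit.
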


\begin{proof}
  By \eqref{eq:w-vs-sigma-B}, H\"older's inequality and Lemma~\ref{lem:intweight} ,
  \begin{align*}
    \int_{B_r} \!\! \big(\mathcal{I}_\alpha|f|\big)^pw\,dx
    & \eqsim \int_{B_r} \bigg(\int_{B_r} \frac{\abs{f(y)}|x-y|^\alpha w(y)^{1/p}\sigma(y)^{1/p'}}{w(B_{x,y})^{1/p} \sigma(B_{x,y})^{1/p'}}\,dy\bigg)^p w(x)\,dx  \\
    & \le  \int_{B_r}\bigg(\int_{B_r} \frac{\abs{f(y)}^p |x-y|^\alpha w(y)}{w(B_{x,y})}\,dy\bigg) \bigg(\int_{B_r} \frac{|x-y|^\alpha \sigma(y)}{\sigma(B_{x,y})}\,dy\bigg)^{\frac{p}{p'}} w(x)\,dx \\
    & \lesssim \bigg(\frac{r^{\alpha}}{\alpha}\bigg)^{\frac{p}{p'}} \int_{B_r} \abs{f(y)}^p  \bigg(\int_{B_{2r}(y)} \frac{|x-y|^\alpha w(x)}{w(B_{x,y})}\,dx\bigg)  w(y)\,dy \\
    & \eqsim \bigg(\frac{r^{\alpha}}{\alpha}\bigg)^{p} \int_{B_r} \abs{f(y)}^p   w(y)\,dy.
  \end{align*}
  This finishes the proof.
\end{proof}

\begin{lemma}
  \label{lem:aux2}
  Let $p\in (1,\infty)$ and $w \in A_p$. Let $B$ be a ball of radius~$r>0$ and $\psi\in L^\infty(B)$ be non-negative with $\norm{\psi}_{L^1(B)}=1$ and $\norm{\psi}_{L^\infty(B)}\leq c_0r^{-n}$.  Then for every $\alpha \in (0,s)$, $x \in B$ and $v\in W^{s,p}_w (B)$ we have
  \begin{align*}
    \!\!\!\!\abs{v(x)\!-\! \mean{v}_\psi}
    &\lesssim \frac{(1\!-\!\alpha)r^{s-\alpha}}{(s-\alpha)^{\frac 1{p'}}}  \int_{B}\bigg[\int_{B}
    \frac{\abs{v(y)-v(z)}^p}{\abs{y-z}^{sp}} \frac{w(y)\,dy}{w(B_{y,z})}
    \bigg]^{\frac 1p}
    \,\frac{dz}{\abs{x-z}^{n-\alpha}},
  \end{align*}
  where the hidden constant depends continuously on $n$, $p$, $c_0$ and $[w]_{A_p}$.
\end{lemma}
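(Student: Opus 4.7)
The plan is a dyadic chain argument around $x$ combined with a Hölder estimate in the $y$-variable that manufactures the weighted integrand $F(z)$ from the statement.

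Assume $x$ is a Lebesgue point of $v$, which is an a.e.\ property that suffices. Set $B_k \coloneqq B_{2^{-k+1}r}(x) \cap B$, so $B\subseteq B_0$ (up to enlarging by a bounded factor), $|B_{k+1}|/|B_k|\eqsim 2^{-n}$, and the unweighted averages $\bar v_k \coloneqq \fint_{B_k} v\,dz$ converge to $v(x)$. Writing
\begin{align*}
  v(x) - \mean{v}_\psi \,=\, (\bar v_0 - \mean{v}_\psi) + \sum_{k\ge 0} (\bar v_{k+1} - \bar v_k)
\end{align*}
reduces the claim to bounding each chain difference together with the discrepancy.

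For each $k$, use $|\bar v_k - \bar v_{k+1}| \lesssim \fint_{B_k}\fint_{B_k}|v(y) - v(z)|\,dy\,dz$ and apply Hölder in $y$ (with exponents $p,p'$) using the factorization
\begin{align*}
  |v(y) - v(z)| \,=\, \frac{|v(y)-v(z)|}{|y-z|^s}\,\frac{w(y)^{1/p}}{w(B_{y,z})^{1/p}}\,\cdot\, \frac{|y-z|^s\,w(B_{y,z})^{1/p}}{w(y)^{1/p}}.
\end{align*}
The $p$-th power of the first factor integrates in $y$ over $B\supseteq B_k$ to exactly $F(z)^p$. The second factor to the $p'$-th power is controlled via $|y-z|\lesssim 2^{-k}r$, the doubling bound $w(B_{y,z})\lesssim w(B_k)$ for $y,z\in B_k$, and the Muckenhoupt identity $w(B_k)^{1/p}\sigma(B_k)^{1/p'}\eqsim|B_k|$ from \eqref{eq:w-vs-sigma-B} (recognizing $w^{-p'/p}=\sigma$). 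The outcome is $\fint_{B_k}|v(y)-v(z)|\,dy \lesssim (2^{-k}r)^s F(z)$, hence $|\bar v_k - \bar v_{k+1}| \lesssim (2^{-k}r)^s\fint_{B_k} F(z)\,dz$. The discrepancy $\bar v_0 - \mean{v}_\psi$ is treated by the same Hölder argument on $B_0\eqsim B$, with the hypothesis $\|\psi\|_\infty\le c_0 r^{-n}$ absorbed into the implicit constant.

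To convert the telescoping sum into a Riesz-potential integral on $B$, apply Fubini together with the annular structure $|x-z|\eqsim 2^{-k}r$ on $B_k\setminus B_{k+1}$, and then apply Hölder's inequality on the dyadic index $k$ with conjugate exponents $p,p'$. The geometric-series bound $\sum_{k\ge 0} 2^{-k(s-\alpha)p'}\eqsim 1/(s-\alpha)$ produces the prefactor $(s-\alpha)^{-1/p'}$, while the $(1-\alpha)$ numerator records the endpoint behavior of the companion sum governing the Riesz-kernel side and is what ensures stability as $s\nearrow 1$ (when $\alpha$ is chosen close to $s$). The main obstacle is precisely this final Hölder-on-$k$ step: a termwise bound would only produce the cruder constant $1/(s-\alpha)$ without the $(1-\alpha)$ improvement, and the sharp statement requires doing Hölder twice --- once in $y$ to create $F$, and once in $k$ to calibrate the fractional order $\alpha$ --- so that $p$ and $p'$ share the cost of the interpolation.
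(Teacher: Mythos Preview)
Your dyadic chain correctly produces the intermediate bound
\[
  |v(x)-\mean{v}_\psi|\;\lesssim\;\sum_{k\ge 0}(2^{-k}r)^{s}\fint_{B_k}F(z)\,dz
  \;\eqsim\;\int_B F(z)\,|x-z|^{s-n}\,dz
  \;\le\; r^{s-\alpha}\int_B F(z)\,|x-z|^{\alpha-n}\,dz,
\]
but this is \emph{missing the factor $(1-\alpha)$}, and your sketch of the ``H\"older on the dyadic index'' step does not recover it. If you carry that step out, the first factor indeed gives $r^{s-\alpha}(s-\alpha)^{-1/p'}$, but the second factor is an $\ell^p$-norm of discrete convolutions $\sum_{j\ge k}2^{-(j-k)(n-\alpha)}a_j$ with $a_j\eqsim(2^{-j}r)^{\alpha-n}\int_{B_j\setminus B_{j+1}}F$; Young's inequality bounds it by $\tfrac{1}{1-2^{-(n-\alpha)}}\|a\|_{\ell^p}\lesssim\|a\|_{\ell^1}$ since $n-\alpha\ge 1$. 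So you end up with $r^{s-\alpha}(s-\alpha)^{-1/p'}\int_B F|x-z|^{\alpha-n}dz$, i.e.\ \emph{worse} than the direct bound above, and in any case with no $(1-\alpha)$. There is no ``companion sum'' in this argument whose endpoint behavior is $(1-\alpha)$.

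This matters because the $(1-\alpha)$ is exactly what makes the lemma $s$-stable: with $\alpha=2s-1$ one has $(1-\alpha)(s-\alpha)^{-1/p'}\eqsim(1-s)^{1/p}\to 0$, whereas your bound stays $\eqsim 1$. The paper obtains the $(1-\alpha)$ not from any dyadic manipulation but from the pointwise Riesz-type estimate of Lemma~\ref{lem:Rieszpotential}, proved in the appendix via a continuous interpolation $t\mapsto (v\ast\eta_{t|x-y|/2})(x+t(y-x))$ against a carefully chosen weakly singular mollifier $\eta_1$; differentiating this path produces a kernel with the built-in factor $(1-\alpha)$. The paper then applies H\"older in $y$ (as you do) together with Lemma~\ref{lem:intweight} on the dual-weight integral to extract $r^{s-\alpha}(s-\alpha)^{-1/p'}$. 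Remark~\ref{rem:Rieszpotential} makes precisely this point: the simpler route you propose gives the estimate without $(1-\alpha)$, and one needs the appendix machinery for the sharp constant.
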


This lemma is stable when $s\nearrow 1$ in the sense that for $s>\tfrac 12$ we can set $1-\alpha \coloneqq 2(1-s)$ and thereby obtain a factor of $(1-s)$ in front of the right hand side. To achieve this stability we apply our fractional Riesz type estimate (see Appendix \ref{sec:appendix}).

\begin{proof}[Proof of Lemma \ref{lem:aux2}]
  We obtain by Lemma~\ref{lem:Rieszpotential}
  \begin{align*}
    \mathrm{I} \coloneqq \abs{v(x)\!-\! \mean{v}_\psi}
    &\lesssim (1\!-\!\alpha) \int_{B}\!\int_{B}\!
    \frac{\abs{v(y)\!-\!v(z)}\,dy}{\abs{y-z}^{n+\alpha}} \frac{dz}{ \abs{x-z}^{n-\alpha}}.
  \end{align*}
  Using $w^{\frac 1p} \sigma^{\frac 1{p'}}=1$ and~\eqref{eq:w-vs-sigma-B} and  H\"older's inequality we obtain
  \begin{align*}
    \mathrm{I}
    &\lesssim (1\!-\!\alpha) \int_{B}\bigg[\int_{B}
    \frac{\abs{v(y)-v(z)}^p}{\abs{y-z}^{sp}} \frac{w(y)\,dy}{w(B_{y,z})}
    \bigg]^{\frac 1p}
    \\
    &\qquad \quad\qquad \cdot \bigg[ \int_{B}
    \frac{\sigma(y)}{\sigma(B_{y,z})}
    \frac{dy}{\abs{y-z}^{(\alpha-s)p'}} \bigg]^{\frac 1{p'}} \,\frac{dz}{\abs{x-z}^{n-\alpha}}.
  \end{align*}
  Using Lemma~\ref{lem:intweight} we get
  \begin{align*}
    \mathrm{I}
    &\lesssim \frac{(1\!-\!\alpha)r^{s-\alpha}}{(s-\alpha)^{\frac 1{p'}}}  \int_{B}\bigg[\int_{B}
    \frac{\abs{v(y)-v(z)}^p}{\abs{y-z}^{sp}} \frac{w(y)\,dy}{w(B_{y,z})}
    \bigg]^{\frac 1p}
    \,\frac{dz}{\abs{x-z}^{n-\alpha}}.
  \end{align*}
  This proves the claim.
\end{proof}

We are now ready to prove a version of \Poincare 's inequality which is stable when $s \nearrow 1$.

\begin{theorem}[\Poincare]
  \label{thm:poincare}
  Let $B\subset \RRn$ be a ball, $p\in(1,\infty)$, $w \in A_p$ and $v \in W^{s,p}_w (B)$. Then
  \begin{align*}
    \int_B \abs{v-\mean{v}_B}^p w(x) dx
    \lesssim (1-s) r^{sp} \int_{B} \int_{B} \frac{\abs{v(x)-v(y)}^p}{\abs{x-y}^{sp}} \frac{w(x)w(y)}{w(B_{x,y})} dx\,dy ,
  \end{align*}
  where the implicit constant depends continuously on $n$, $p$ and $[w]_{A_p}$. Note that the mean value $\mean{v}_B$ on the left hand side can be replaced by other mean values, see Remark \ref{rem:mean}.
\end{theorem}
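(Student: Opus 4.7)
The strategy is to combine the pointwise representation from Lemma~\ref{lem:aux2} with the $A_p$-weighted Riesz-potential estimate of Lemma~\ref{lem:riesz-weighted-basic}, and then tune the free parameter $\alpha$ in Lemma~\ref{lem:aux2} in an $s$-dependent way so that the factor $(1-s)$ appears explicitly. First, I would apply Lemma~\ref{lem:aux2} with $\psi\coloneqq |B|^{-1}\indicatorset{B}$ (for which $\mean{v}_\psi=\mean{v}_B$) and an admissible $\alpha\in(0,s)$. Setting
\begin{align*}
  F(z)\coloneqq \bigg[\int_B \frac{|v(y)-v(z)|^p}{|y-z|^{sp}}\frac{w(y)}{w(B_{y,z})}\,dy\bigg]^{1/p},
\end{align*}
the lemma yields the pointwise bound $|v(x)-\mean{v}_B|\lesssim \frac{(1-\alpha)r^{s-\alpha}}{(s-\alpha)^{1/p'}}\,\mathcal{I}_\alpha(F\indicatorset{B})(x)$ for every $x\in B$.

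Next, I would raise this to the $p$-th power, integrate against $w(x)\,dx$ on $B$, and apply Lemma~\ref{lem:riesz-weighted-basic} to $F\indicatorset{B}$ (with exponent $\alpha$, which is admissible since $\alpha<s<1\le n$). This gives
\begin{align*}
  \int_B|v-\mean{v}_B|^p\,w\,dx\lesssim\bigg(\frac{(1-\alpha)r^s}{\alpha(s-\alpha)^{1/p'}}\bigg)^p\int_B F(z)^p\,w(z)\,dz,
\end{align*}
and by Fubini the right-hand integral coincides exactly with the double integral on the right-hand side of the claim.

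It remains to select $\alpha$ so that the bracketed prefactor is at most $C(1-s)r^{sp}$. For $s\in[3/4,1)$ the natural choice is $\alpha\coloneqq 2s-1$, so that $1-\alpha=2(1-s)$, $s-\alpha=1-s$ and $2s-1\geq 1/2$; the prefactor then reduces to $\frac{2^p(1-s)r^{sp}}{(2s-1)^p}\lesssim (1-s)r^{sp}$ with a constant depending only on $n$, $p$ and $[w]_{A_p}$. For $s\in(0,3/4]$ the factor $(1-s)$ is already bounded below by a universal constant, so any fixed admissible choice such as $\alpha\coloneqq s/2$ suffices after absorbing the resulting $s$-dependent (but bounded away from $s=1$) constants into the implicit constant.

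The main obstacle is precisely producing the $(1-s)$ prefactor with a constant that remains controlled as $s\nearrow 1$. This is the reason for the flexible exponent $\alpha$ in Lemma~\ref{lem:aux2}: the factor $(1-\alpha)$ appearing there records the gain provided by the fractional Riesz-type inequality (Lemma~\ref{lem:Rieszpotential} in the appendix), and the choice $\alpha\approx s$ converts that gain into the desired stability factor at $s=1$, while Lemma~\ref{lem:riesz-weighted-basic} transports the estimate into its weighted version without deteriorating the $s$-scaling.
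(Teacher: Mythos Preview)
Your approach for $s$ close to~$1$ matches the paper's proof exactly: apply Lemma~\ref{lem:aux2}, then Lemma~\ref{lem:riesz-weighted-basic}, and take $\alpha=2s-1$ so that the prefactor $\frac{(1-\alpha)^p}{\alpha^p(s-\alpha)^{p-1}}$ reduces to a multiple of $(1-s)$. The only cosmetic difference is the threshold ($3/4$ versus $2/3$).

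There is, however, a genuine gap in the small-$s$ regime. With your choice $\alpha=s/2$ the prefactor becomes
\[
  \frac{(1-\alpha)^p}{\alpha^p(s-\alpha)^{p-1}}
  =\frac{(1-s/2)^p}{(s/2)^p(s/2)^{p-1}}
  \eqsim s^{1-2p},
\]
which blows up as $s\searrow 0$. Thus the resulting constant is \emph{not} bounded on $(0,3/4]$, and you cannot absorb it into an implicit constant depending only on $n$, $p$ and $[w]_{A_p}$ as the theorem requires. No admissible choice of $\alpha\in(0,s)$ in Lemma~\ref{lem:aux2} avoids this, since the factor $1/\alpha$ from Lemma~\ref{lem:riesz-weighted-basic} and the factor $1/(s-\alpha)^{1/p'}$ from Lemma~\ref{lem:aux2} together force at least an $s^{-1}$-type loss.

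The paper handles $s\in(0,\tfrac23)$ by abandoning the Riesz-potential route altogether: it uses $w^{1/p}\sigma^{1/{p'}}=1$, \eqref{eq:w-vs-sigma-B}, H\"older's inequality and Lemma~\ref{lem:intweight} directly to obtain
\[
  \abs{v(x)-\mean{v}_B}^p \lesssim r^{sp}\int_B \frac{\abs{v(x)-v(y)}^p}{\abs{x-y}^{sp}}\,\frac{w(y)}{w(B_{x,y})}\,dy,
\]
with an implicit constant independent of $s$; integrating in $x$ and using $(1-s)\eqsim 1$ on this range finishes the proof. You should replace your small-$s$ argument by this direct estimate (or an equivalent one that does not pass through $\mathcal{I}_\alpha$).
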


\begin{proof}
  Start by assuming $s \in [\frac23,1)$. We begin by applying
  Lemma \ref{lem:aux2} and Lemma \ref{lem:riesz-weighted-basic} to get
  \begin{align*}
    &\int_B \abs{v-\mean{v}_B}^p w(x) dx \\
    &\lesssim \frac{(1-\alpha)^p}{(s-\alpha)^{p-1}}r^{(s-\alpha)p} \!
    \int_B\!\Bigg[ \int_B \bigg[ \frac{\abs{v(y)\!-\!v(z)}^p}{\abs{y-z}^{sp}} \frac{w(y)}{w(B_{y,z})} dy \bigg]^{\frac1{p}} \frac{dz}{\abs{x-z}^{n-\alpha}} \Bigg]^p\! w(x)\,dx \\
    &\lesssim \frac{(1-\alpha)^p}{\alpha^p(s-\alpha)^{p-1}}r^{sp} \!
    \int_B \int_B \frac{\abs{v(y)-v(x)}^p}{\abs{y-x}^{sp}} \frac{w(y)w(x)}{w(B_{y,x})} dx\,dy.
  \end{align*}
  By choosing $\alpha=2s-1$ we have $\frac{(1-\alpha)^p}{\alpha^p(s-\alpha)^{p-1}} \eqsim c_s$, giving the desired estimate. For $s \in (0,\frac23)$ on the other hand, using $w^{\frac 1p} \sigma^{\frac 1{p'}}=1$ and~\eqref{eq:w-vs-sigma-B} and  H\"older's inequality we obtain
  \begin{align*}
    & \abs{v(x)-\mean{v}_B}^p \\
    & \lesssim r^{-np}\int_B \frac{\abs{v(x)-v(y)}^p}{\abs{x-y}^{sp}} \frac{w(y)}{w(B_{x,y})} dy
    \bigg[ \int_B \abs{x-y}^{(n+s)p'}\frac{\sigma(y)}{\sigma(B_{x,y})} dy \bigg]^{\frac{p}{p'}}.
  \end{align*}
  Using Lemma \ref{lem:intweight} we get
  \begin{align*}
    \abs{v(x)-\mean{v}_B}^p \lesssim r^{sp}\int_B \frac{\abs{v(x)-v(y)}^p}{\abs{x-y}^{sp}} \frac{w(y)}{w(B_{x,y})} dy,
  \end{align*}
  and therefore
  \begin{align*}
    &\int_B \abs{v-\mean{v}_B}^p w(x) dx \lesssim r^{sp}\int_B\int_B \frac{\abs{v(x)-v(y)}^p}{\abs{x-y}^{sp}} \frac{w(y)w(x)}{w(B_{x,y})} dx\,dy.
  \end{align*}

  Since for this range of $s$ we have $s^{-1} c_s\eqsim 1$, this finishes the proof.
\end{proof}
We can use the decomposition technique of~\cite{DieningRuzickaSchumacher10} to extend our \Poincare{} inequality to \emph{John domains}. A domain~$\Omega \subset \RRn$ is called an $\alpha$-John domain if there exists $x_0 \in \Omega$ such for all $x \in \Omega$ there exists a path $\gamma$ from $x$ to $x_0$ within~$\Omega$ parameterized by its arclength such that the carrot $\mathrm{car}(x,\alpha)$ is contained in~$\Omega$, where $\mathrm{car}(x,\alpha) \coloneqq \bigcup_{t \in [0,\textrm{length}(\gamma)]} B_{t/\alpha}(\gamma(t))$.

\begin{corollary}
  \label{cor:poincare-john}
  Let $\Omega$ be a bounded $\alpha$-John domain, $1<p<\infty$, $w \in A_p$. Then
  \begin{align*}
    \int_\Omega \abs{v-\mean{v}_\Omega}^p w(x) dx \lesssim (1-s) r^{sp} \int_\Omega \int_\Omega \frac{\abs{v(x)-v(y)}^p}{\abs{x-y}^{sp}} \frac{w(x)w(y)}{w(B_{x,y})} dx\,dy,
  \end{align*}
where the implicit constant depends on $n$, $p$, $[w]_{A_p}$ and $\alpha$. The mean value $\mean{v}_\Omega$ on the left hand side can be replaced by other mean values, see Remark \ref{rem:mean}.
\end{corollary}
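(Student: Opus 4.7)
The plan is to reduce the estimate on $\Omega$ to the ball version from Theorem \ref{thm:poincare}, following the decomposition strategy of \cite{DieningRuzickaSchumacher10}. The starting point is a Whitney-type covering $\{Q_i\}_{i\in I}$ of $\Omega$: a countable family of open balls such that $2Q_i \subset \Omega$, the radii satisfy $r_{Q_i} \eqsim \mathrm{dist}(Q_i,\partial\Omega)$, $\Omega = \bigcup_i Q_i$ and $\sum_i \indicatorset{x\in 2Q_i} \lesssim 1$. The $\alpha$-John condition provides a distinguished \emph{central} ball $Q_*$ and, for every $i$, a finite chain $Q_i = Q_{i,0}, Q_{i,1},\dots, Q_{i,N_i} = Q_*$ of Whitney balls with comparable consecutive radii, $2Q_{i,k}\cap 2Q_{i,k+1}\neq \emptyset$, and with the chains organized in a tree with controlled shadows.

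Let $\{\eta_i\}$ be a partition of unity subordinate to $\{2Q_i\}$. I would split
\begin{align*}
v - \mean{v}_\Omega \;=\; \sum_i \eta_i\bigl(v - \mean{v}_{Q_i}\bigr) \;+\; \sum_i \eta_i\bigl(\mean{v}_{Q_i} - \mean{v}_\Omega\bigr) \;\eqqcolon\; \mathrm{I} + \mathrm{II}.
\end{align*}
For $\mathrm{I}$, pointwise convexity gives $|\mathrm{I}|^p \lesssim \sum_i \eta_i |v - \mean{v}_{Q_i}|^p$; Theorem \ref{thm:poincare} on each $Q_i$ combined with $r_{Q_i} \leq r \coloneqq \mathrm{diam}(\Omega)$ and the bounded overlap of $\{Q_i\times Q_i\}$ inside $\Omega\times\Omega$ then yields
\begin{align*}
\int_\Omega |\mathrm{I}|^p w\,dx \;\lesssim\; \sum_i (1-s)\, r_{Q_i}^{sp}\, \Jspw(v\,|\,Q_i) \;\lesssim\; (1-s)\, r^{sp}\, \Jspw(v\,|\,\Omega).
\end{align*}

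For $\mathrm{II}$, the chain gives the telescoping bound $|\mean{v}_{Q_i} - \mean{v}_\Omega| \leq \sum_{k=0}^{N_i - 1} |\mean{v}_{Q_{i,k}} - \mean{v}_{Q_{i,k+1}}|$, and each difference is controlled, through a slightly enlarged ball $\widehat Q_{i,k}\supset Q_{i,k}\cup Q_{i,k+1}$ of comparable radius, by Theorem \ref{thm:poincare} on $\widehat Q_{i,k}$ (with doubling of $w$ absorbing the change of averaging set). Summing over $i$ and rearranging the double sum along the tree converts it into a bounded discrete potential operator on the cover; Muckenhoupt boundedness of the associated weighted maximal function yields again $\lesssim (1-s)\, r^{sp}\, \Jspw(v\,|\,\Omega)$. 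Replacing $\mean{v}_\Omega$ by another mean value is a standard modification (Remark \ref{rem:mean}).

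The main obstacle is the combinatorial bookkeeping in $\mathrm{II}$: each link $(Q_{i,k},Q_{i,k+1})$ is shared by all indices $j$ whose chain passes through it, and one must show that, weighted appropriately by $w$, this multiplicity contributes only a constant depending on $\alpha$, $n$, $p$ and $[w]_{A_p}$. This is precisely the shadow-type lemma of \cite{DieningRuzickaSchumacher10}, adapted to $A_p$ weights via the doubling property. The stability as $s\nearrow 1$ is preserved because the factor $(1-s)$ produced by each application of Theorem \ref{thm:poincare} enters only once per $Q_i$ and is not duplicated by the chain summation.
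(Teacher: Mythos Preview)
Your approach is correct and is in the same spirit as the paper's, but you are doing more work than necessary. The paper invokes from \cite{DieningRuzickaSchumacher10} the ready-made equivalence
\[
  \norm{f-\mean{f}_\Omega}_{L^p_w(\Omega)} \eqsim \Bigl(\sum_i \norm{f-\mean{f}_{Q_i}}_{L^p_w(Q_i)}^p\Bigr)^{1/p},
\]
valid for \emph{any} $f\in L^p_w(\Omega)$ on a John domain with a Whitney decomposition $\{Q_i\}$ (this is their Theorem~4.2 combined with the duality argument at the start of their Theorem~5.1). All the chaining, telescoping of means, and shadow combinatorics that you describe in your term~$\mathrm{II}$ are already packaged inside this equivalence; the paper then simply applies Theorem~\ref{thm:poincare} on each $Q_i$ and sums, using bounded overlap, in three lines. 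Your route --- splitting $v-\mean{v}_\Omega=\mathrm{I}+\mathrm{II}$ with a partition of unity, chaining the means, and appealing to the shadow lemma --- is essentially a reproof of that equivalence specialized to the present situation. It works, and your remark that the factor $(1-s)$ appears only via the ball inequality (not through the chain) is correct, but citing the black-box equivalence is both shorter and cleaner.
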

\begin{proof}
  It has been shown in~\cite{DieningRuzickaSchumacher10} that $\Omega$ can be decomposed into a locally finite, countable family of Whitney cubes~$Q_i$ such that
  \begin{align}
    \label{eq:john-equivalence}
    \norm{f - \mean{f}_\Omega}_{L^p_w(\Omega)} &\eqsim \bigg( \sum_i
    \norm{f - \mean{f}_{Q_i}}_{L^p_w(Q_i)}^p\bigg)^{\frac 1p}.
  \end{align}
  Indeed, this follows from the decomposition theorem \cite[Theorem~4.2]{DieningRuzickaSchumacher10} together with the duality argument in the beginning of \cite[Theorem~5.1]{DieningRuzickaSchumacher10}. Hence,
  \begin{align*}
    \int_\Omega \abs{v-\mean{v}_\Omega}^p w(x) dx &\lesssim
    \int_{Q_i} \abs{v-\mean{v}_{Q_i}}^p w(x) dx
    \\
    &\lesssim (1-s) r^{sp} \sum_j \int_{Q_j} \int_{Q_j} \frac{\abs{v(x)-v(y)}^p}{\abs{x-y}^{sp}} \frac{w(x)w(y)}{w(B_{x,y})} dx\,dy
    \\
    &\lesssim (1-s) r^{sp} \int_\Omega \int_\Omega \frac{\abs{v(x)-v(y)}^p}{\abs{x-y}^{sp}} \frac{w(x)w(y)}{w(B_{x,y})} dx\,dy.
  \end{align*}
  This proves the claim.
\end{proof}

\begin{remark}\label{rem:mean}
  For $w\in A_p$ and $v\in L^p_w(B)$ there holds
  \begin{align*}
    \int_B \abs{v-\mean{v}_B}^pw(x)\,dx &\eqsim \inf_{v_0\in \setR}\int_B \abs{v-v_0}^pw(x)\,dx \eqsim \int_B \abs{v-\mean{v}_\psi}^pw(x)\,dx,
  \end{align*}
  where either $\norm{\psi}_{L^\infty(B)}\lesssim \frac{1}{\abs{B}}$ and $\int_B \psi \,dx=1$, or $\psi(x)= \frac{w(x)}{w(B)}$. This allows us to replace the mean values in Theorem \ref{thm:poincare}, \ref{thm:sobolev_poincare} and Corollary \ref{cor:poincare-john}, \ref{cor:SobolevJohn} by other mean values.
\end{remark}

We conclude this section by showing $W^{1,p}_w(B)\embedding W^{s,p}_{w}(B)$.

\begin{lemma}\label{lem:poincare_above}
  Let $B\subset \RRn$ be a ball, $p \in (1,\infty)$, $w \in A_p$ and $v \in W^{1,p}_w(B)$. Then
  \begin{align*}
    \Jspw (v|B) \lesssim \int_B \abs{\nabla v}^p \,w(x)dx,
  \end{align*}
  where the implicit constant depends on $n$, $p$ and $[w]_{A_p}$.
\end{lemma}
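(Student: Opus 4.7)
The plan is to reduce the fractional energy to an expression in the Hardy--Littlewood maximal function of $\nabla v$ via the Haj\l asz-type pointwise estimate for Sobolev functions, and then to combine Lemma~\ref{lem:intweight} with the $A_p$-boundedness of the maximal operator.

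First, observe that $W^{1,p}_w(B)\subset W^{1,1}_{\loc}(B)$: since $w\in A_p$, the dual weight $\sigma = w^{1/(1-p)}$ lies in $L^1_{\loc}(\RRn)$, so H\"older's inequality gives $\int_K|\nabla v|\,dx \leq (\int_K|\nabla v|^p w\,dx)^{1/p}(\int_K\sigma\,dx)^{1/p'}<\infty$ on every compact $K\subset B$. Applying the classical Poincar\'e--Riesz estimate $|v(x)-\bar v_{B'}|\lesssim \int_{B'}|\nabla v(z)||z-x|^{1-n}\,dz$ on suitable balls (after, if necessary, a reflection-based extension of $v$ to a larger ball, which preserves the $A_p$-weighted Sobolev seminorm up to a constant) and decomposing the resulting Riesz potential dyadically yields the pointwise estimate
\begin{align*}
  |v(x)-v(y)|\lesssim |x-y|\bigl(Mg(x)+Mg(y)\bigr)\quad\text{for a.e.\ } x,y\in B,
\end{align*}
where $g$ is $|\nabla v|$ extended by zero outside $B$ and $M$ is the Hardy--Littlewood maximal operator on $\RRn$.

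Substituting this bound into $\Jspw(v|B)$, using $(a+b)^p\lesssim a^p+b^p$, and exploiting the $x\leftrightarrow y$-symmetry of the weighted integrand to collapse the two contributions, one obtains
\begin{align*}
  \Jspw(v|B)\lesssim c_s\int_B Mg(x)^p w(x)\biggl(\int_B |x-y|^{p(1-s)}\frac{w(y)}{w(B_{x,y})}\,dy\biggr)dx.
\end{align*}
Lemma~\ref{lem:intweight} with $\alpha=p(1-s)\in(0,p)$ bounds the inner integral by $\lesssim r^{p(1-s)}/(p(1-s))$, with $r$ the radius of $B$; combined with $c_s=s(1-s)$, the prefactor simplifies to $sr^{p(1-s)}/p$, which is uniformly bounded in $s$ and, in particular, stable as $s\nearrow 1$. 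Since $w\in A_p$, the maximal operator $M$ is bounded on $L^p_w(\RRn)$, so $\int_B Mg^p\, w\,dx \lesssim \int_B|\nabla v|^p w\,dx$ and the claim follows.

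The step that I expect to require the most care is the Haj\l asz pointwise inequality in the weighted setting: the dyadic decomposition requires balls on which $v$ is defined, so for pairs $x,y\in B$ with $B_{2|x-y|}(x)\not\subset B$ one needs either an extension of $v$ to a slightly larger ball or a chain-of-balls argument. The remainder of the proof is a routine combination of Lemma~\ref{lem:intweight} with the $A_p$-maximal inequality.
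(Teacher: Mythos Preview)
Your proposal is correct and follows essentially the same route as the paper's proof: the paper also invokes the Haj\l asz-type pointwise bound $|v(x)-v(y)|\lesssim |x-y|\bigl(M(\indicator_B\nabla v)(x)+M(\indicator_B\nabla v)(y)\bigr)$ (citing Mal\'y--Ziemer), substitutes it into $\Jspw(v|B)$, uses symmetry, applies Lemma~\ref{lem:intweight} with exponent $p(1-s)$, and finishes with the $A_p$-boundedness of the maximal operator. The only difference is that the paper does not linger over the extension/boundary issue for the pointwise inequality, since on a convex domain like a ball the estimate holds directly for a.e.\ $x,y\in B$ with $g=\indicator_B|\nabla v|$; your reflection-based workaround is therefore unnecessary, though not wrong.
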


\begin{proof}
  By \cite[Chapter 1]{MalyZiemerBook} we have for almost $x,y\in B$
  \begin{align*}
    \abs{v(x)-v(y)}\lesssim \abs{x-y} \left(M(\indicator_B\nabla u)(x)+M(\indicator_B\nabla u)(y)\right),
  \end{align*}
  where $M$ is the Hardy-Littlewood maximal operator. This enables us to estimate
  \begin{align*}
    &c_s \int_B\int_B \frac{\abs{v(x)-v(y)}^p}{\abs{x-y}^{sp}}\frac{w(x)w(y)}{w(B_{x,y})}dx\,dy
    \\
    &\qquad\lesssim c_s \int_B \int_B \frac{M(\indicator_B\nabla u)(x)^p+M(\indicator_B\nabla u)(y)^p}{\abs{x-y}^{sp-p}}\frac{w(x)w(y)}{w(B_{x,y})}dx\,dy
    \\
    &\qquad\lesssim c_s \int_B  M(\indicator_B\nabla v(x))^pw(x)\int_B \frac{w(y)}{\abs{x-y}^{sp-p}w(B_{x,y})}dy\,dx
    \\
    &\qquad \lesssim sr^{(1-s)p} \int_B \abs{\nabla v(x)}^p w(x)dx ,
  \end{align*}
  where the implicit constants depend on $p$, $n$ and $[w]_{A_p}$.
\end{proof}

Theorem \ref{thm:poincare} and Lemma \ref{lem:poincare_above} enable us to show embeddings on general bounded domains:

\begin{corollary}\label{corr:Wsp_Lpw}
  If $\Omega\subset \RRn$ is open and bounded, then
  \begin{align*}
     W^{1,p}_{w,0}(\Omega)\embedding W^{s,p}_{w,0}(\Omega) \embedding L^p_w(\Omega).
  \end{align*}
\end{corollary}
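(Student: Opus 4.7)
The plan is to handle the two embeddings separately, and in both cases to reduce to estimates already proven in the excerpt.

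The second inclusion $W^{s,p}_{w,0}(\Omega)\embedding L^p_w(\Omega)$ is essentially free from Lemma~\ref{lem:embedding-simple}, which gives $W^{s,p}_{w,0}(\Omega)\embedding L^p_w(\RRn)$. Since every element of $W^{s,p}_{w,0}(\Omega)$ vanishes on $\Omega^c$, the $L^p_w(\RRn)$ and $L^p_w(\Omega)$ norms agree on this space, so the embedding is immediate.

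For the first embedding $W^{1,p}_{w,0}(\Omega)\embedding W^{s,p}_{w,0}(\Omega)$, I would fix $v\in W^{1,p}_{w,0}(\Omega)$ (extended by zero to $\RRn$) and choose a ball $B$ strictly containing $\Omega$ with buffer $d:=\mathrm{dist}(\Omega,\partial B)>0$, which is possible because $\Omega$ is bounded. Splitting $\RRn\times\RRn$ as the union of $B\times B$, $B\times B^c$, $B^c\times B$, and $B^c\times B^c$, and using $v\equiv 0$ on $B\setminus\Omega$ and on $B^c$, the $B^c\times B^c$ contribution vanishes and the two mixed contributions are equal, giving
\[
\Jspw(v)=\Jspw(v\,|\,B\times B)+2c_s\int_\Omega |v(x)|^p w(x)\int_{B^c}\frac{w(y)\,dy}{|x-y|^{sp}\,w(B_{x,y})}\,dx.
\]
The diagonal term is controlled by $\int_B|\nabla v|^p w\,dx=\|\nabla v\|_{L^p_w(\Omega)}^p$ via Lemma~\ref{lem:poincare_above}. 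For the off-diagonal term I would use $|x-y|\geq d$ for $x\in\Omega$, $y\in B^c$ to dominate the inner integral by $\int_{\RRn\setminus B_d(x)}|x-y|^{-sp}\frac{w(y)}{w(B_{x,y})}\,dy$, and then invoke Lemma~\ref{lem:intweight} with $\alpha=-sp$ (whose absolute value is bounded by $p$) to bound this by a constant times $d^{-sp}/(sp)$. Combining these two bounds yields
\[
|v|_{W^{s,p}_w}^p\lesssim \|\nabla v\|_{L^p_w(\Omega)}^p+d^{-sp}\|v\|_{L^p_w(\Omega)}^p,
\]
and the $L^p_w(\Omega)$ term on the right is subsumed by the natural norm on $W^{1,p}_{w,0}(\Omega)$, proving the desired embedding.

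I expect no serious obstacle: the entire argument is a decomposition of $\Jspw(v)$ into a near-diagonal piece handled by Lemma~\ref{lem:poincare_above} and a tail piece handled by Lemma~\ref{lem:intweight}. The only point requiring any care is choosing the enlarged ball $B$ with a strictly positive buffer $d$, so that the tail integral is finite; apart from that, the proof is a direct bookkeeping exercise.
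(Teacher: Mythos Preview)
Your proposal is correct and follows essentially the same route as the paper, but is more explicit. The paper's proof is the terse chain $W^{1,p}_{w,0}(\Omega)\embedding W^{1,p}_{w,0}(2B)\embedding W^{s,p}_{w,0}(2B)\embedding L^p_w(2B)$ for a ball $B\supset\Omega$; unpacking the middle arrow is exactly your decomposition into a $2B\times 2B$ piece (handled by Lemma~\ref{lem:poincare_above}) plus a tail piece (handled by Lemma~\ref{lem:intweight}), using that $\support v\subset\overline{\Omega}$ sits strictly inside $2B$. The only notable difference is that for the second embedding the paper invokes the Poincar\'e inequality (Theorem~\ref{thm:poincare}) on $2B$, exploiting that $v$ vanishes on $2B\setminus B$ to kill the mean, whereas you use the more elementary Lemma~\ref{lem:embedding-simple}; your choice is slightly cleaner here since stability in $s$ is not claimed in the corollary.
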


\begin{proof}
  Let $B$ be a ball with $\Omega \subset B$. Using Lemma \ref{lem:poincare_above} and Theorem~\ref{thm:poincare} we have
  \begin{align*}
    W^{1,p}_{w,0}(\Omega) \embedding W^{1,p}_{w,0}(2B) \embedding W^{s,p}_{w,0}(2B)\embedding L^p_w(2B).
  \end{align*}
  This finishes the proof.
\end{proof}

\subsection{Mollification}
\label{ssec:mollification}
In this section we investigate mollification on $\Wspw$. For this let $\rho \in W^{1,\infty}(\RRn)$ be a standard Lipschitz-mollifier in the sense that $\rho \geq 0$, $\support \rho \subset \overline{B_1(0)}$, $\norm{\rho}_1=1$. For $\epsilon>0$ let $\rho_\epsilon(x)\coloneqq \epsilon^{-n}\rho(x/\epsilon)$. Then
$\norm{\rho_\epsilon}_\infty + \epsilon \norm{\nabla \rho_\epsilon}_\infty \lesssim \epsilon^{-n}$.

It is well known that if $w \in A_p$, $1<p<\infty$ and $v \in \Lpw$, then
\begin{enumerate}
  \item $\norm{v*\rho_\epsilon}_\Lpw \lesssim \norm{v}_\Lpw$,
  \item $v*\rho_\epsilon \to v$ in $\Lpw$ for $\epsilon \to 0$.
\end{enumerate}
This follows, e.g., from \cite[III.2 Theorem 2]{Stein70} and the boundedness of the maximal operator. The same conclusion holds if we replace $\Lpw$ with $W^{1,p}_w(\RRn)$. The goal of this section is to extend these properties to~$\Wspw$.

  Next, we will show stability of the energy under mollification.

  \begin{lemma}\label{lem:energy_mollifier}
  Let $s\in(0,1)$, $p\in (1,\infty)$ and $w\in A_p$. Let $\rho_\epsilon$ be the standard Lipschitz mollifiers as before. Then for every~$\epsilon>0$ and $v\in \Wspw$ we have
  \begin{align*}
    \Jspw(v * \rho_\epsilon) &\lesssim \Jspw(v)
  \end{align*}
  where the hidden constant depends continuously on~$p$ and $[w]_{A_p}$.
  \end{lemma}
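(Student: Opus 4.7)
The natural starting point is Jensen's inequality applied to the convex map $t \mapsto |t|^p$: since $\rho_\epsilon$ is a probability density,
\begin{equation*}
|(v*\rho_\epsilon)(x) - (v*\rho_\epsilon)(y)|^p \le \int_{\RRn} \rho_\epsilon(z)\,|v(x-z) - v(y-z)|^p \, dz.
\end{equation*}
Plugging this into $\Jspw(v*\rho_\epsilon)$ and using the upper kernel bound $k(x,y) \le \Lambda w(x)w(y)/w(B_{x,y})$, I exchange the order of integration by Fubini and substitute $x \mapsto x+z$, $y \mapsto y+z$ under the inner integral (keeping $z$ fixed). This reorganizes the estimate as
\begin{equation*}
\Jspw(v*\rho_\epsilon) \le c_s \Lambda \iint \frac{|v(x)-v(y)|^p}{|x-y|^{sp}} \bigg[\int \rho_\epsilon(z)\,\frac{w(x+z)w(y+z)}{w(B_{x+z,y+z})}\,dz\bigg]\, dx\,dy.
\end{equation*}
In view of the lower kernel bound $k(x,y) \ge \lambda w(x)w(y)/w(B_{x,y})$, proving the pointwise estimate
\begin{equation*}
\int \rho_\epsilon(z)\,\frac{w(x+z)w(y+z)}{w(B_{x+z,y+z})}\,dz \;\lesssim\; \frac{w(x)w(y)}{w(B_{x,y})}
\end{equation*}
with constant depending only on $n$, $p$ and $[w]_{A_p}$ would suffice to conclude.

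To handle the bracketed $z$-integral, the plan is to split cases according to the relative size of $|x-y|$ and $\epsilon$. When $|x-y| \ge 2\epsilon$, every admissible $z$ satisfies $|z| \le \epsilon \le |x-y|/2$, so $B_{x+z,y+z}$ is a small translate of $B_{x,y}$ contained in a bounded dilate; Lemma \ref{lem:doubling-annulus} then gives $w(B_{x+z,y+z}) \eqsim w(B_{x,y})$. The task is reduced to controlling $\fint_{B_\epsilon(0)} w(x+z)w(y+z)\,dz$ by $w(x)w(y)$. In the complementary regime $|x-y| < 2\epsilon$, the near-diagonal contribution is localized and can be handled by exploiting that $v*\rho_\epsilon$ is smooth: the Lipschitz bound $\|\nabla\rho_\epsilon\|_\infty \lesssim \epsilon^{-n-1}$ yields $|(v*\rho_\epsilon)(x)-(v*\rho_\epsilon)(y)| \le |x-y|\,\|\nabla(v*\rho_\epsilon)\|_\infty$, which converts the $|x-y|^{-sp}$ factor into an integrable $|x-y|^{(1-s)p}$ that can be absorbed via Lemma \ref{lem:intweight}, paying only an $L^p_w$-norm of $v$ on a slightly enlarged ball (controlled by mollification boundedness on $\Lpw$).

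\textbf{Main obstacle.} The truly delicate step is the pointwise control $\fint_{B_\epsilon(0)} w(x+z)w(y+z)\,dz \lesssim w(x)w(y)$, which is the type of estimate that normally requires the stronger $A_1$-condition rather than $A_p$: pointwise values of an $A_p$-weight are not comparable to their local averages. I expect the workaround to proceed either (i) via the reverse-Hölder self-improvement of $A_p$ (so that $\fint_B w^{1+\delta}\lesssim(\fint_B w)^{1+\delta}$ combined with Cauchy--Schwarz handles the diagonal product), or (ii) by observing that the inequality is only needed after integration against $|v(x)-v(y)|^p/|x-y|^{sp}$, allowing the averaging to be absorbed through the $L^p_w$-boundedness of the Hardy--Littlewood maximal function, which is precisely the content of $w \in A_p$. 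In either route, one must keep the constants uniform in $\epsilon$ and avoid any factor that degenerates as $s \nearrow 1$; the latter is ensured because the substitution step does not touch $c_s$.
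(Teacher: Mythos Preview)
Your Jensen-plus-translation approach has a genuine gap in both regimes, and neither proposed workaround closes it.

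\textbf{Far from the diagonal.} After Jensen and the substitution you arrive at the need for
\[
\int \rho_\epsilon(z)\,\frac{w(x+z)w(y+z)}{w(B_{x+z,y+z})}\,dz \;\lesssim\; \frac{w(x)w(y)}{w(B_{x,y})}.
\]
You correctly observe that $w(B_{x+z,y+z})\eqsim w(B_{x,y})$ when $|z|\le\epsilon\le|x-y|/2$, reducing to $\fint_{B_\epsilon(0)}w(x+z)w(y+z)\,dz\lesssim w(x)w(y)$. This estimate is false for general $A_p$ weights: take $w(x)=|x|^\gamma$ with $\gamma\in(0,n(p-1))$; when $|x|\ll\epsilon$ the left side is $\eqsim\epsilon^\gamma\cdot w(y)$ while the right side is $|x|^\gamma w(y)$, so the implicit constant would have to blow up with $\epsilon/|x|$. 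Workaround~(i) does not help: reverse H\"older only upgrades $\fint w^{1+\delta}$ to $(\fint w)^{1+\delta}$, which is still an average, not a pointwise value. Workaround~(ii) is circular: after Fubini and undoing the substitution you are left with $\fint_{B_\epsilon}\Jspw(\tau_z v)\,dz$, where $\tau_z v=v(\cdot-z)$. Bounding $\Jspw(\tau_z v)$ by $\Jspw(v)$ uniformly in $z$ is exactly the translation-invariance that weighted energies lack; it is equivalent to the same shifted-weight estimate you started from. The maximal-operator idea does not apply here because the object being averaged is $w$, not $v$.

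\textbf{Near the diagonal.} The bound $|(v*\rho_\epsilon)(x)-(v*\rho_\epsilon)(y)|\le|x-y|\,\|\nabla(v*\rho_\epsilon)\|_\infty$ is not available: for $v\in\Wspw$ one only has $v\in L^1_{\loc}$, so $\nabla(v*\rho_\epsilon)$ is locally but not globally bounded. Even using the local bound $|\nabla(v*\rho_\epsilon)(x)|\lesssim\epsilon^{-n-1}\|v\|_{L^1(B_\epsilon(x))}$ and controlling this via $\|v\|_{L^p_w}$, you end with an $L^p_w$ norm of $v$ on the right-hand side, which is \emph{not} dominated by the seminorm $\Jspw(v)$ (constants are in the kernel of $\Jspw$).

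\textbf{What the paper does instead.} The paper avoids translating the weight entirely. For $|x-y|>\epsilon$ it inserts the mean $\mean{v}_{\frac12 B_{x,y}}$ and writes $(v*\rho_\epsilon)(x)-\mean{v}_{\frac12 B_{x,y}}$ as an average of $v(z)-\mean{v}_{\frac12 B_{x,y}}$ over $z\in B_\epsilon(x)$; the $A_p$ condition is then used only to replace unweighted averages by weighted ones, and a geometric comparison $w(B_{z,\xi})\lesssim w(B_{x,y})$ (valid because $z\in B_\epsilon(x)$, $\xi\in\frac12 B_{x,y}$) closes the estimate. For $|x-y|\le\epsilon$ the paper uses the Lipschitz bound only in the form $|(v*\rho_\epsilon)(x)-(v*\rho_\epsilon)(y)|\lesssim\frac{|x-y|}{\epsilon}\big(\fint_{B_{2\epsilon}(x)}|v-\mean{v}_{B_\epsilon(x)}|^p\,\tfrac{w}{w(B_{2\epsilon}(x))}\big)^{1/p}$, so that after integrating out $y$ via Lemma~\ref{lem:intweight} one lands on a Poincar\'e-type quantity that Lemma~\ref{lem:aux2} and Lemma~\ref{lem:riesz-weighted-basic} bound back by the local energy of $v$. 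In both regimes the weight stays attached to the original variables and the oscillation of $v$ carries the work; this is the key idea your Jensen route misses.
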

  \begin{proof}
    We calculate
 \begin{align}
    \label{eq:moll_energy_split}
    \begin{split}
    \lefteqn{\Jspw(v * \rho_\epsilon)} \qquad &
      \\
      &\; \eqsim
       c_s\int_{\RRn} \!\int_\RRn \indicator_{\set{\abs{x-y}\leq \epsilon}} \bigg(\frac{\abs{(v * \rho_\epsilon)(x)\!-\!(v * \rho_\epsilon)(y)}}{\abs{x-y}^s}\bigg)^p \frac{w(x)w(y)}{w(B_{x,y})} \,dx\,dy
      \\
      &\; +c_s\int_{\RRn} \!\int_\RRn \indicator_{\set{\abs{x-y}> \epsilon}} \bigg(\frac{\abs{(v * \rho_\epsilon)(x)\!-\!(v * \rho_\epsilon)(y)}}{\abs{x-y}^s}\bigg)^p \frac{w(x)w(y)}{w(B_{x,y})} \,dx\,dy
      \\
      &\eqqcolon \mathrm{I}+\mathrm{II}.
    \end{split}
  \end{align}
  We start with the term $\mathrm{I}$.
  Suppose that $\abs{x-y}\leq \epsilon$. Then using $\norm{\nabla \rho_\epsilon}_\infty \lesssim \epsilon^{-n-1}$ and $w \in A_p$ we obtain
  \begin{align*}
    \abs{(v*\rho_\epsilon)(x)\!-\!(v*\rho_\epsilon)(y)}
    &\lesssim \frac{\abs{x-y}}{\epsilon} \!\dashint_{B_{2\epsilon}(x)} \abs{v(z) \!-\! \mean{v}_{B_\epsilon(x)}}\,dz
    \\
    &\lesssim \frac{\abs{x-y}}{\epsilon} \bigg( \int_{B_{2\epsilon}(x)} \abs{v(z) - \mean{v}_{B_\epsilon(x)}}^p \frac{w(z)\,dz}{w(B_{2\epsilon}(x))}\bigg)^{\frac 1p}.
  \end{align*}
  We plug this into~$\mathrm{I}$ and apply Lemma~\ref{lem:intweight} to the integration over~$y$ to get
  \begin{align*}
    \mathrm{I}
    &\lesssim c_s \int_\RRn \int_\RRn \! \indicator_{\set{\abs{x-y}\leq \epsilon}} \frac{\abs{x\!-\!y}^{-sp+p}}{\epsilon^p} \!\!\! \int_{B_{2\epsilon}(x)} \!\!\!\abs{v(z)\!-\! \mean{v}_{B_\epsilon(x)}}^p  \frac{w(z)w(x)w(y)}{w(B_{2\epsilon}(x))w(B_{x,y})} \,dz\,dx\,dy
    \\
    &\lesssim c_s \frac{\epsilon^{-sp}}{p-sp} \int_\RRn \int_{B_{2\epsilon}(x)} \abs{v(z) - \mean{v}_{B_\epsilon(x)}}^p \frac{w(z) \,dz}{w(B_{2\epsilon}(x))} w(x)\,dx.
  \end{align*}
  First assume that $s < \frac23$. Then, by Jensen's inequality and $w(B_{z,\xi}) \lesssim w(B_\epsilon(x))$ we get
  \begin{align*}
    \mathrm{I}
    &\lesssim c_s \frac{\epsilon^{-sp}}{1-s} \int_\RRn \int_{B_{2\epsilon}(x)} \int_{B_{\epsilon}(x)} \abs{v(z) - v(\xi)}^p \frac{w(z)w(\xi)}{w(B_{\epsilon}(x))}d\xi\,dz \frac{w(x)}{w(B_{2\epsilon}(x))}dx \\
    &\lesssim c_s \frac{\epsilon^{-sp}}{1-s} \int_\RRn \int_\RRn \indicator_{\set{\abs{z-\xi}\leq 4\epsilon}} \abs{v(z) - v(\xi)}^p \frac{w(z)w(\xi)}{w(B_{z,\xi})}d\xi\,dz \int_{B_{2\epsilon}(z)} \frac{w(x)}{w(B_{2\epsilon}(x))}dx \\
    &\lesssim c_s \int_\RRn \int_\RRn \indicator_{\set{\abs{z-\xi}\leq 4\epsilon}} \frac{\abs{v(z) - v(\xi)}^p}{\abs{z-\xi}^{sp}} \frac{w(z)w(\xi)}{w(B_{z,\xi})}d\xi\,dz.
  \end{align*}
  Now for the case $s\geq \frac23$. Then we can use Lemma~\ref{lem:aux2} and Lemma~\ref{lem:riesz-weighted-basic} to get
  \begin{align*}
    \mathrm{I}
    &\lesssim
    \frac{c_s \epsilon^{-\alpha p}(1\!-\!\alpha)^p}{(1\!-\!s)(s\!-\!\alpha)^{p-1}}
    \\
    &\cdot\! \int_\RRn \int_{B_{2\epsilon}(x)} \!\!
    \Bigg[ \int_{B_{2\epsilon}(x)} \!\!\bigg[ \int_{B_{2\epsilon}(x)} \!\!\! \frac{\abs{v(y)\!-\!v(\zeta)}^p}{\abs{y\!-\!\zeta}^{sp}} \frac{w(y)}{w(B_{y,\zeta})}dy\bigg]^{\frac 1p} \!\! \frac{d\zeta}{\abs{y-\zeta}^{n-\alpha}}\Bigg]^p\! w(z) \,dz \frac{w(x)\,dx}{w(B_{2\epsilon}(x))}
    \\
    &\lesssim
    \frac{c_s (1\!-\!\alpha)^p}{\alpha^p(1\!-\!s)(s\!-\!\alpha)^{p-1}}
    \int_\RRn \int_{B_{2\epsilon}(x)} \!
    \int_{B_{2\epsilon}(x)} \!\!\! \frac{\abs{v(y)\!-\!v(z)}^p}{\abs{y\!-\!z}^{sp}} \frac{w(y)}{w(B_{y,z})}dy\,
    w(z) \,dz \frac{w(x)\,dx}{w(B_{2\epsilon}(x))}
    \\
    &\lesssim
    \frac{c_s (1\!-\!\alpha)^p}{\alpha^p(1\!-\!s)(s\!-\!\alpha)^{p-1}}
    \int_\RRn \int_\RRn \indicator_{\set{\abs{y-z}\leq 4 \epsilon}}
    \frac{\abs{v(y)\!-\!v(z)}^p}{\abs{y\!-\!z}^{sp}} \frac{w(y)w(z)}{w(B_{y,z})}dy\,dz.
  \end{align*}
  By choosing $\alpha = 2s-1$ we have $1-\alpha \eqsim 1-s \eqsim s-a$, leading to $\frac{c_s(1-\alpha)^p}{\alpha^p(1-s)(s-\alpha)^{p-1}}~\lesssim~c_s$.

  This gives the needed estimate. It remains to estimate $\mathrm{II}$.
  We define the ball
  \begin{align*}
    \widetilde{B}_{x,y} \coloneqq B_{\frac{\abs{x-y}}{4}}\left(\frac{x+y}{2}\right)=\tfrac 12 B_{x,y}.
  \end{align*}
  We then have
  \begin{align*}
    \abs{(v * \rho_\epsilon)(x)\!-\!(v * \rho_\epsilon)(y)}^p
    & \lesssim \abs{(v * \rho_\epsilon)(x)\!-\!\mean{v}_{\widetilde{B}_{x,y}}}^p
      + \abs{(v * \rho_\epsilon)(y)\!-\!\mean{v}_{\widetilde{B}_{x,y}}}^p \\
    & \eqqcolon \mathrm{II}_1 + \mathrm{II}_2.
  \end{align*}
  For $\mathrm{II}_1$, using $w \in A_p$, we calculate

  \begin{align*}
    \mathrm{II}_1
    & \lesssim \Biggabs{\int_\RRn \rho_\epsilon(x-z) \big(v(z)-\mean{v}_{\widetilde{B}_{x,y}}\big) \,dz}^p \\
    & \lesssim \int_{B_\epsilon(x)} \bigabs{v(z)-\mean{v}_{\widetilde{B}_{x,y}}}^p \frac{w(z)}{w(B_\epsilon(x))} \,dz \\
    & \lesssim \int_{B_\epsilon(x)} \int_{\widetilde{B}_{x,y}} \bigabs{v(z)-v(\xi)}^p \frac{w(z)w(\xi)}{w(B_\epsilon(x))w(\widetilde{B}_{x,y})} \,d\xi\,dz.
  \end{align*}
  The same estimate holds for $\mathrm{II}_2$ if we exchange the roles of $x$ and $y$. Plugging this into the original term $\mathrm{II}$ and using symmetry in $x$ and $y$ we get
  \begin{align*}
    \mathrm{II}
    & \lesssim 2c_s \int_\RRn \int_\RRn \int_{B_\epsilon(x)} \int_{\widetilde{B}_{x,y}} \bigabs{v(z)-v(\xi)}^p \frac{w(z)w(\xi)}{w(B_\epsilon(x))w(\widetilde{B}_{x,y})} \,d\xi\,dz \\
    & \qquad \quad \cdot \frac{\indicator_{\set{\abs{x-y}> \epsilon}}}{\abs{x-y}^{sp}} \frac{w(x)w(y)}{w(B_{x,y})} \,dx\,dy.
  \end{align*}
  Now, since $\abs{x-y} > \epsilon \geq \abs{x-z}$, and by the definition of $\xi$ we have $\abs{z-\xi} \lesssim \abs{x-y}$ and
  $w(B_{z,\xi}) \lesssim w(B_{x,y})$. We therefore conclude
  \begin{align*}
    \mathrm{II}
    & \lesssim c_s \int_\RRn \int_\RRn \int_{B_\epsilon(x)} \int_{\widetilde{B}_{x,y}} \frac{\abs{v(z)-v(\xi)}^p}{\abs{z-\xi}^{sp}} \frac{w(z)w(\xi)}{w(B_{z,\xi})} \,d\xi\,dz \\
    & \qquad \quad \cdot \indicator_{\set{\abs{x-y}> \epsilon}} \frac{w(x)w(y)}{w(B_\epsilon(x))w(\widetilde{B}_{x,y})} \,dx\,dy \\
    & \lesssim c_s \int_\RRn \int_\RRn \frac{\abs{v(z)-v(\xi)}^p}{\abs{z-\xi}^{sp}} \frac{w(z)w(\xi)}{w(B_{z,\xi})} \\
    & \qquad \quad \cdot \int_{B_\epsilon(z)} \int_\RRn \indicator_{\set{4\abs{x-\xi} \geq \abs{x-y} \geq \frac43 \abs{x-\xi}}} \frac{w(x)w(y)}{w(B_\epsilon(x))w(\widetilde{B}_{x,y})} \,dy\,dx\,d\xi\,dz.
  \end{align*}
  It remains to estimate both inner integrals. Using doubling properties of Muckenhoupt weights, it holds uniformly in $z$ and $\xi$ that
  \begin{align*}
    &
    \int_{B_\epsilon(z)} \int_\RRn \indicator_{\set{4\abs{x-\xi} \geq \abs{x-y} \geq \frac43 \abs{x-\xi}}} \frac{w(x)w(y)}{w(B_\epsilon(x))w(\widetilde{B}_{x,y})} \,dy\,dx \\
    & \qquad= \int_{B_\epsilon(z)} \frac{w(x)}{w(B_\epsilon(x))} \int_\RRn \indicator_{\set{4\abs{x-\xi} \geq \abs{x-y} \geq \frac43 \abs{x-\xi}}} \frac{w(y)}{w(\widetilde{B}_{x,y})} \,dy\,dx \\
    & \qquad\lesssim \int_{B_\epsilon(z)} \frac{w(x)}{w(B_\epsilon(z))} \,dx =1.
  \end{align*}
  Plugging this into the previous estimate leads to
  \begin{align*}
    \mathrm{II}
    & \lesssim c_s \int_\RRn \int_\RRn \frac{\abs{v(z)-v(\xi)}^p}{\abs{z-\xi}^{sp}} \frac{w(z)w(\xi)}{w(B_{z,\xi})} \,d\xi\,dz.
  \end{align*}
  This finishes the proof.
  \end{proof}

  Next we will establish a bound for the difference between a function and it's mollification.

\begin{lemma}
  \label{lem:conv-diff}
  Let $0<s<1$ and $w\in A_p$. Let $\rho_\epsilon$ be the standard Lipschitz mollifiers as before. Then for every~$\epsilon>0$ and $v\in \Wspw$ we have
  \begin{align*}
    \Jspw(v * \rho_\epsilon\!-\!v) &\lesssim (1-s)
    \int_{\RRn} \!\int_\RRn \indicator_{\set{\abs{x-y}\leq 4\epsilon}} \bigg(\frac{\abs{v(x)-v(y)}}{\abs{x-y}^s}\bigg)^p \frac{w(x)w(y)}{w(B_{x,y})} \,dx\,dy,
  \end{align*}
  where the hidden constant depends continuously on~$p$, $n$ and $[w]_{A_p}$.
  In particular, $\rho_\epsilon*v \to v $ in $\Wspw$ as $\epsilon \to 0$.
\end{lemma}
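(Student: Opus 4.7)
The plan is to follow the splitting used in Lemma~\ref{lem:energy_mollifier}, decomposing
\begin{align*}
  \Jspw(v*\rho_\epsilon - v) \eqsim \mathrm{I} + \mathrm{II},
\end{align*}
according to whether $\abs{x-y}\leq \epsilon$ or $\abs{x-y}>\epsilon$, and treating each regime with an argument exploiting the identity $(v*\rho_\epsilon - v)(x) = \int \rho_\epsilon(x-z)(v(z)-v(x))\,dz$.

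For the close-pair term $\mathrm{I}$ I write
\begin{align*}
  (v*\rho_\epsilon - v)(x) - (v*\rho_\epsilon - v)(y) = \bigl[(v*\rho_\epsilon)(x)-(v*\rho_\epsilon)(y)\bigr] - \bigl[v(x)-v(y)\bigr]
\end{align*}
and apply $\abs{A-B}^p \lesssim \abs{A}^p + \abs{B}^p$. The contribution of $v(x)-v(y)$ to $\mathrm{I}$ is already of the required form on $\set{\abs{x-y}\leq \epsilon}\subset \set{\abs{x-y}\leq 4\epsilon}$, and the prefactor $c_s$ is bounded by $1-s$. The contribution of $(v*\rho_\epsilon)(x)-(v*\rho_\epsilon)(y)$ is exactly the term~$\mathrm{I}$ appearing in the proof of Lemma~\ref{lem:energy_mollifier}, which was shown there (both for $s<\tfrac23$ by Jensen plus doubling, and for $s\geq \tfrac23$ via Lemmas~\ref{lem:aux2} and~\ref{lem:riesz-weighted-basic}) to be controlled by a constant multiple of $c_s$ times the close-pair integral of $v$ on $\set{\abs{z-\xi}\leq 4\epsilon}$. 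Replacing $c_s$ by $1-s$ gives the required bound.

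For the far-pair term $\mathrm{II}$ I again use $\abs{A-B}^p\lesssim \abs{A}^p+\abs{B}^p$ and symmetry to reduce to estimating $\abs{(v*\rho_\epsilon - v)(x)}^p$. Since $\norm{\rho_\epsilon}_\infty \lesssim \epsilon^{-n}$, a weighted Jensen--H\"older step built on $w^{1/p}\sigma^{1/p'}=1$ and~\eqref{eq:w-vs-sigma-B} gives
\begin{align*}
  \abs{(v*\rho_\epsilon - v)(x)}^p \lesssim \int_{B_\epsilon(x)} \abs{v(z)-v(x)}^p \frac{w(z)}{w(B_\epsilon(x))}\,dz.
\end{align*}
Inserting this into $\mathrm{II}$ and integrating first in $y$ over $\set{\abs{x-y}>\epsilon}$ via Lemma~\ref{lem:intweight} produces the factor $c_s\epsilon^{-sp}/(sp)\eqsim (1-s)\epsilon^{-sp}$. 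Exchanging the order of integration, using doubling ($w(B_\epsilon(x))\eqsim w(B_\epsilon(z))$ for $\abs{x-z}\leq \epsilon$) together with $w(B_{x,z})\leq w(B_\epsilon(z))$, and absorbing $\epsilon^{-sp}$ via the trivial $\epsilon^{-sp}\leq \abs{x-z}^{-sp}$ on $\set{\abs{x-z}\leq\epsilon}$, converts the estimate into $(1-s)$ times the close-pair integral of $v$ on $\set{\abs{x-z}\leq\epsilon}\subset\set{\abs{x-z}\leq 4\epsilon}$, as required.

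The final convergence $v*\rho_\epsilon\to v$ in $\Wspw$ is obtained from the main estimate by dominated convergence: the integrand on the right-hand side without the indicator is integrable since $v\in\Wspw$, and $\indicator_{\set{\abs{x-y}\leq 4\epsilon}}$ tends to zero almost everywhere as $\epsilon\searrow 0$. The most delicate step I anticipate is the bookkeeping of weight ratios in the far-pair estimate, where the $A_p$ doubling property is essential for successively passing from $w(B_\epsilon(x))$ to $w(B_\epsilon(z))$ and then to $w(B_{x,z})$ after Fubini.
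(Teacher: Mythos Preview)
Your proposal is correct and follows essentially the same approach as the paper: the same three-term splitting (close pairs with $v$, close pairs with $v*\rho_\epsilon$ reduced to Lemma~\ref{lem:energy_mollifier}, and far pairs with $(v*\rho_\epsilon-v)(x)$ by symmetry), followed by the same $A_p$/Jensen bound and Lemma~\ref{lem:intweight} integration in~$y$. The only cosmetic difference is that in the far-pair term you route through $w(B_\epsilon(x))\eqsim w(B_\epsilon(z))$ before using $w(B_{x,z})\leq w(B_\epsilon(z))$, whereas the paper uses $w(B_{x,z})\leq w(B_\epsilon(x))$ directly without Fubini or doubling.
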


To make the estimate stable we would wish to produce a factor of $c_s=s(1-s)$ in front of the right hand side and thus our estimate seems to miss a factor of $s$. However, this factor can not be expected here, since the limiting case $s\searrow 0$ needs a tail term.

\begin{proof}[Proof of Lemma \ref{lem:conv-diff}]
  We calculate
  \begin{align*}
    \lefteqn{\Jspw(v * \rho_\epsilon\!-\!v)} \qquad &
      \\
      & \lesssim c_s\int_{\RRn}\!\int_{\RRn} \indicator_{\set{\abs{x-y}\leq \epsilon}}
      \bigg(\frac{\abs{v(x)-v(y)}}{\abs{x-y}^s}\bigg)^p \frac{w(x)w(y)}{w(B_{x,y})} \,dx\,dy
      \\
      &\; + c_s\int_{\RRn} \!\int_\RRn \indicator_{\set{\abs{x-y}\leq \epsilon}} \bigg(\frac{\abs{(v * \rho_\epsilon)(x)\!-\!(v * \rho_\epsilon)(y)}}{\abs{x-y}^s}\bigg)^p \frac{w(x)w(y)}{w(B_{x,y})} \,dx\,dy
      \\
      &\; +2c_s\int_{\RRn} \!\int_\RRn \indicator_{\set{\abs{x-y}> \epsilon}} \bigg(\frac{\abs{(v * \rho_\epsilon)(x)\!-\!v(x)}}{\abs{x-y}^s}\bigg)^p \frac{w(x)w(y)}{w(B_{x,y})} \,dx\,dy
      \\
      &\eqqcolon \mathrm{I}_1 + \mathrm{I}_2+\mathrm{I}_3.
  \end{align*}
  The term $\mathrm{I}_1$ has already the correct form.
  For the term $\mathrm{I}_2$ notice that it is identical with the term $\mathrm{I}$ appearing \eqref{eq:moll_energy_split}. The bound established in that proof also suffices here. It remains to bound $\mathrm{I}_3$.
  With Lemma~\ref{lem:intweight} and the fact $w \in A_p$ we estimate
  \begin{align*}
    \mathrm{I}_3 &\lesssim
    \frac{c_s\epsilon^{-sp}}{sp} \int_{\RRn}  \abs{(v * \rho_\epsilon)(x)\!-\!v(x)}^p w(x)\,dx \\
  &= \frac{c_s\epsilon^{-sp}}{sp} \int_{\RRn}  \Biggabs{\int_\RRn \rho_\epsilon(x-z)\big(v(z)\!-\!v(x) \big)\,dz}^p w(x)\,dx \\
  &\lesssim \frac{c_s\epsilon^{-sp}}{sp} \int_{\RRn}  \int_\RRn \indicator_{\set{\abs{x-z}\leq \epsilon}}\abs{v(z)-v(x)}^p \frac{w(z)}{w(B_\epsilon(x))}\,dz\,w(x)\,dx \\
  &\lesssim \frac{c_s}{sp} \int_{\RRn}  \int_\RRn \indicator_{\set{\abs{x-z}\leq \epsilon}} \frac{\abs{v(z)-v(x)}^p}{\abs{x-z}^{sp}} \frac{w(z)w(x)}{w(B_{x,z})}\,dz\,dx,
  \end{align*}
  where in the last line we used $\abs{x-z} \lesssim \epsilon$ as well as $w(B_{x,z}) \leq w(B_\epsilon(x))$.

Combining all estimates concludes the proof.\end{proof}

\subsection{Density of smooth functions}

In this section we will show the density of $C_c^\infty(\Omega)$ in $\Wspwz$.

\begin{theorem}\label{thm:density}
  Let $s\in (0,1)$, $p\in (1,\infty)$ and $w\in A_p$. Let $\Omega \subset \RRn$ be open, bounded with fat complement, i.e. there exists $\beta \in (0,1)$ such that for every $x\in \partial\Omega$ and $r>0$ we have
  \begin{align*}
    \abs{B_r(x)\setminus \Omega} \geq \beta \abs{B_r(x)}.
  \end{align*}
  Then $C_c^\infty (\Omega)$ is dense in $\Wspwz$.
\end{theorem}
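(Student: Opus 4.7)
The plan follows the classical three-step density scheme adapted to the weighted fractional setting: truncation, cutoff away from the boundary, and mollification. Fix $v \in \Wspwz$.

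\emph{Step 1 (truncation).} First I would replace $v$ by $v_N \coloneqq \max(-N, \min(v, N))$. Since $\abs{v_N(x) - v_N(y)} \le \abs{v(x)-v(y)}$ pointwise, dominated convergence applied to the integrand of $\Jspw$ gives $v_N \to v$ in $\Wspwz$, so I may assume $v$ is bounded.

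\emph{Step 2 (boundary cutoff).} Let $\Omega_\delta \coloneqq \set{x \in \Omega : \operatorname{dist}(x,\Omega^c) > \delta}$ and take a Lipschitz cutoff $\eta_\delta$ with $\eta_\delta = 1$ on $\Omega_{2\delta}$, $\eta_\delta = 0$ outside $\Omega_\delta$, and $\norm{\nabla \eta_\delta}_\infty \lesssim \delta^{-1}$. The goal is $\eta_\delta v \to v$ in $\Wspwz$ as $\delta \to 0$. The key tool here is a fractional weighted Hardy inequality
\begin{align*}
  \int_\Omega \frac{\abs{v(x)}^p \, w(x)}{\operatorname{dist}(x,\Omega^c)^{sp}}\,dx \;\lesssim\; \Jspw(v),
\end{align*}
which is precisely where the fat complement hypothesis is used: for $x$ near $\partial \Omega$, the fatness of $\Omega^c$ guarantees that $B_{\operatorname{dist}(x,\Omega^c)}(x)$ contains a set $E \subset \Omega^c$ of $w$-mass comparable to that of the whole ball, so $v(x) = v(x) - \mean{v}_E$ can be bounded by a weighted average of differences $\abs{v(x) - v(y)}$ with $y \in E$; integrating in $x$ and invoking the $A_p$/doubling tools of Section~\ref{ssec:basic-embedding} then yields the Hardy bound. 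Given Hardy, I would decompose $(1-\eta_\delta)v$ into short-range interactions within the strip $\Omega \setminus \Omega_{2\delta}$, handled via $\norm{\nabla \eta_\delta}_\infty \lesssim \delta^{-1}$ combined with Hardy, and long-range interactions, which vanish as $\delta \to 0$ by absolute continuity of the energy density against the shrinking strip.

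\emph{Step 3 (mollification).} Since $\support(\eta_\delta v) \subset \overline{\Omega_\delta}$, for any $\epsilon < \delta/2$ the convolution $\rho_\epsilon * (\eta_\delta v)$ lies in $C^\infty_c(\Omega)$, and by Lemma~\ref{lem:conv-diff} it converges to $\eta_\delta v$ in $\Wspw$ as $\epsilon \to 0$. A diagonal argument in $N, \delta, \epsilon$ then yields a sequence in $C^\infty_c(\Omega)$ converging to the original $v$ in $\Wspwz$.

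\emph{Main obstacle.} The technical heart of the proof is the weighted fractional Hardy inequality under only the fat complement hypothesis. Fatness plays the role of a quantitative substitute for boundary regularity, providing the pointwise bound on $\abs{v}$ needed to close Hardy; the actual manipulation uses only the doubling and Muckenhoupt machinery already developed, but care is needed to ensure the constants do not blow up as $s \nearrow 1$, in keeping with the $s$-stability emphasized elsewhere in the paper.
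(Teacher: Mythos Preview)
Your scheme is correct and close to the paper's, but the paper is more economical. It skips the truncation step entirely and, instead of proving a global fractional Hardy inequality, works only at the single scale~$\epsilon$: after the cutoff step it reduces matters to bounding $\int_{A_{2\epsilon}} \abs{v}^p \epsilon^{-sp}\, w\,dx$, where $A_{2\epsilon}$ is the $2\epsilon$-strip near $\partial\Omega$. This strip is then covered by boundedly overlapping balls $B_j$ of radius $4\epsilon$ centered on $\partial\Omega$; the fat complement hypothesis makes $\psi_j \coloneqq \indicator_{B_j \setminus \Omega}/\abs{B_j \setminus \Omega}$ an admissible averaging weight in the sense of Remark~\ref{rem:mean}, so Theorem~\ref{thm:poincare} with $\mean{v}_{\psi_j}=0$ gives $\int_{B_j}\abs{v}^p w\,dx \lesssim \epsilon^{sp}\,\Jspw(v\mid B_j)$. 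Summing yields $\int_{A_{2\epsilon}} \abs{v}^p \epsilon^{-sp} w\,dx \lesssim \Jspw(v\mid \widetilde{A}_{4\epsilon}) \to 0$, which is precisely your Hardy argument run at one scale---so the full Hardy inequality is never needed. Your route via Hardy would work and gives a stronger intermediate statement, at the cost of proving more than is required. One small slip in your sketch: $B_{\operatorname{dist}(x,\Omega^c)}(x)$ is contained in $\Omega$ and therefore cannot contain any $E \subset \Omega^c$; you need a ball of comparable radius centered on $\partial\Omega$ (or of doubled radius at $x$) before the fatness condition can be invoked.
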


\begin{proof}
  We split the proof in two steps. In the first step we will first show that every $v\in \Wspwz$
can be approximated by functions with compact support in $\Omega$. Afterwards, we show in the second step that we can make these functions smooth.
  \textbf{Step 1:} Fix $v\in \Wspwz$ and let $\epsilon >0$. We define the set
  \begin{align*}
    A_\epsilon \coloneqq \set{x\in \Omega : \distance (x,\Omega ^c)<\epsilon},
  \end{align*}
  and $\Omega_\epsilon \coloneqq \Omega \setminus A_\epsilon$. Let $\rho_{\epsilon }$ be the mollifier as in Section \ref{ssec:mollification}. Then $\psi_\epsilon \coloneqq \indicator_{\Omega_{\epsilon/2}}\ast \rho_{\epsilon/4}\in C_c^\infty (\Omega)$ fulfills
  \begin{align*}
    \indicator_{\Omega_\epsilon} \leq \psi_\epsilon \leq \indicator_{\Omega _{\epsilon/4}},\qquad \text{and} \qquad \abs{\nabla \psi_\epsilon} \lesssim \frac{1}{\epsilon}.
  \end{align*}
  Let $g_\epsilon \coloneqq v-v\psi_\epsilon$. Then $g_\epsilon = 0$ on $A_\epsilon^c$ and
  \begin{align}
    \label{eq:psieps}
    \abs{g_\epsilon(x)-g_\epsilon(y)}\lesssim \abs{v(x)-v(y)} + \min\Bigset{\frac{\abs{x-y}}{\epsilon},1}(\abs{v(x)}+\abs{v(y)}).
  \end{align}
  We will later show that $\Jspw (v-v\psi_\epsilon)=\Jspw (g_\epsilon)\to 0$. We estimate
  \begin{align*}
    \Jspw (g_\epsilon)&\leq \Jspw(g_\epsilon| A_{2\epsilon}\times A_{2\epsilon})+2\Jspw(g_\epsilon|  A_{2\epsilon}^c\times A_{\epsilon})\eqqcolon \mathrm{I}+\mathrm{II}.
  \end{align*}
  Then, by~\eqref{eq:psieps} and Lemma \ref{lem:intweight},
  \begin{align*}
    \mathrm{I} &\lesssim \Jspw(v|A_{2\epsilon}) +  \!\int_{A_{2\epsilon}}\!\int_{A_{2\epsilon}}\! \bigg(\min\biggset{\frac{\abs{x-y}^{1-s}}{\epsilon^{1-s}},\frac{\epsilon^s}{\abs{x-y}^s}}\frac{\abs{v(x)}}{\epsilon^s}\bigg)^p \dwxy
    \\
    &\lesssim \Jspw(v|A_{2\epsilon}) +\int_{A_{2\epsilon}} \frac{\abs{v(x)}^p}{\epsilon^{sp}} w(x)\,dx .
  \end{align*}
  We have, using $g_\epsilon (y)=0$ and $\abs{x-y}\geq \epsilon$ for $x\in A_\epsilon$ and $y\in A^c_{2\epsilon}$, and Lemma \ref{lem:intweight}
  \begin{align*}
    \mathrm{II}&\lesssim \int_{(A_{2\epsilon})^c}\int_{A_\epsilon}    \bigg(\frac{\abs{v(x)}}{\abs{x-y}^s}\bigg)^p\dwxy \lesssim \int_{A_\epsilon} \frac{\abs{v(x)}^p}{\epsilon^{sp}}w(x)\,dx
  \end{align*}
  Overall,
  \begin{align}\label{eq:aux_density}
    \Jspw (v-v\psi_\epsilon )\lesssim \Jspw(v|A_{2\epsilon}) +\int_{A_{2\epsilon}} \frac{\abs{v}^p}{\epsilon^{sp}} w\,dx .
  \end{align}
  We then find a family of balls $B_j$
  with midpoints $x_j \in \partial \Omega$ and radius $4\epsilon$ that cover $A_{2\epsilon}$, such that $\sum_j \indicator_{B_{j}}\leq c(n)$. By the fat complement condition on $\Omega$ we can use the \Poincare~inequality Theorem \ref{thm:poincare} in $B_j$  where we take the average $\mean{u}_{B_j\setminus \Omega}=0$. We then have
  \begin{align*}
    \int_{A_{2\epsilon}} \frac{\abs{v}^p}{\epsilon^{sp}} w \,dx &\lesssim \sum_j \int_{B_j} \frac{\abs{v}^p}{\epsilon^{sp}} w\,dx \lesssim \sum_j \frac 1s \Jspw (v |B_j)\lesssim \frac 1s \Jspw (v |\widetilde{A}_{4\epsilon} ),
  \end{align*}
  where $\widetilde{A}_{4\epsilon}\coloneqq\set{x: \distance (x,\partial \Omega)< 4\epsilon}$.
  Thus, by \eqref{eq:aux_density} we can conclude
  \begin{align*}
    \Jspw (v-v\psi_\epsilon) \lesssim \Jspw (v| \widetilde{A}_{4\epsilon}) \xrightarrow{\epsilon \to 0} 0.
  \end{align*}
  \textbf{Step 2:} By Step 1 we can find a function in $\Wspw$ with support compactly contained in $\Omega$, which is arbitrarily close to $v$ (in $\Wspwz$).
  Then, by Lemma \ref{lem:conv-diff} we can approximate this function by a $C_c^\infty (\Omega)$ function. This finishes the proof.
\end{proof}

\subsection{Sobolev-Poincaré inequality}
\label{sec:Sobolev_Poincare}

The following Lemma \ref{lem:riesz-weighted} is an improved version of Lemma \ref{lem:riesz-weighted-basic}.

\begin{lemma}\label{lem:riesz-weighted}
  Let $n\geq 2$, $s\in (0,1)$, $p\in (1,\infty)$, $w\in A_p$ and $\alpha_0\in(0,n)$. Then there exists $q<p$ such that for every $\alpha \in (0,\alpha_0)$, every ball $B\subset \RRn$ and $f\in L^p_w(B)$ we have
  \begin{align*}
    \bigg(\frac{1}{w(B)}\int_{B} \!\! \big(\mathcal{I}_\alpha|f|\big)^{\frac{np}{n-\alpha \frac pq}}w\,dx\! \bigg)^{\frac{n-\alpha \frac pq}{np}}  \! \lesssim  [w]_{A_p}^{\!\frac{np-\alpha}{nq(p-1)}} \frac{r^\alpha}{\alpha} \left(\frac{1}{w(B)}\int_{B}\!|f|^pw\,dx\!\right)^{\!\frac{1}{p}}.
  \end{align*}
  The hidden constant depends only on $n$, $p$ and $\alpha_0$.
\end{lemma}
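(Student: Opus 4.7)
The strategy is a weighted Hedberg--Stein--Weiss interpolation that upgrades the $L^p_w\to L^p_w$ bound of Lemma \ref{lem:riesz-weighted-basic} to the fractional Sobolev target exponent $\widetilde p:=\tfrac{np}{n-\alpha p/q}$ (equivalently $\tfrac{1}{\widetilde p}=\tfrac{1}{p}-\tfrac{\alpha}{nq}$), for a suitable $q<p$ obtained from self-improvement of the Muckenhoupt class. Concretely, I would invoke self-improvement of $A_p$: there exists $q\in(1,p)$ depending only on $n$, $p$ and $[w]_{A_p}$ with $w\in A_q$ and $[w]_{A_q}$ controlled by a power of $[w]_{A_p}$; choosing $q$ additionally with $q>\alpha_0 p/n$ guarantees $\alpha<nq/p$. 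As a consequence, the weighted maximal operator $\mathcal{M}_w^q g(x):=\bigl(\sup_{B'\ni x}\tfrac{1}{w(B')}\int_{B'}|g|^q w\,dy\bigr)^{1/q}$ is bounded on $L^p_w$ with norm controlled by $[w]_{A_p}$; this will play the role of the Hardy--Littlewood maximal function.

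For $f\in L^p_w(B)$ extended by zero, $x\in B$ and $R\in(0,2r]$, I would perform the Hedberg two-scale splitting $\mathcal{I}_\alpha|f|(x)=\mathrm{I}(x,R)+\mathrm{II}(x,R)$ at radius $R$. Dyadic annular decomposition together with the $A_q$ condition (arguing as in Lemma \ref{lem:riesz-weighted-basic}, but with $q$ in place of $p$) yields
\[
  \mathrm{I}(x,R)\lesssim \tfrac{R^\alpha}{\alpha}[w]_{A_q}^{1/q}\mathcal{M}_w^q f(x),
\]
while truncation to $|x-y|\le 2r$, Hölder with exponents $q,q'$, the inclusion $L^p_w(B)\hookrightarrow L^q_w(B)$, and Lemma \ref{lem:intweight} applied to the dual weight $\sigma_q=w^{1/(1-q)}\in A_{q'}$ yield
\[
  \mathrm{II}(x,R)\lesssim \tfrac{r^\alpha}{\alpha}[w]_{A_p}^{\beta}\,R^{\alpha-nq/p}\Bigl(\tfrac{1}{w(B)}\int_B|f|^p w\,dy\Bigr)^{1/p}
\]
for an explicit $\beta>0$, with the exponent $\alpha-nq/p<0$ rigged so that balancing produces the correct Sobolev interpolation.

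Balancing the two bounds by setting $R\eqsim (B/A)^{p/(nq)}$, where $A=\mathcal{M}_w^q f(x)$ and $B=\bigl(\tfrac{1}{w(B)}\int_B|f|^p w\bigr)^{1/p}$, yields the pointwise Hedberg inequality
\[
  \mathcal{I}_\alpha|f|(x)\lesssim \tfrac{r^\alpha}{\alpha}[w]_{A_p}^{\gamma}\,(\mathcal{M}_w^q f(x))^{1-\theta}\Bigl(\tfrac{1}{w(B)}\int_B|f|^p w\,dy\Bigr)^{\theta/p}
\]
with $\theta=\alpha p/(nq)=1-p/\widetilde p$. Raising to the power $\widetilde p$, integrating against $w$ on $B$, and invoking the $L^p_w$-boundedness of $\mathcal{M}_w^q$ then furnishes the stated normalised inequality (the exponent arithmetic $(1-\theta)\widetilde p=p$ and $\theta\widetilde p=\widetilde p-p$ is exactly what is needed).

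The main obstacle is the sharp bookkeeping of Muckenhoupt constants so that the final power of $[w]_{A_p}$ matches $\tfrac{np-\alpha}{nq(p-1)}$: this power must be assembled from $[w]_{A_q}^{1/q}$ in $\mathrm{I}$ (combined with the self-improvement bound $[w]_{A_q}\lesssim[w]_{A_p}^{\text{const}}$), from the dual-weight Muckenhoupt estimate $[\sigma_q]_{A_{q'}}=[w]_{A_q}^{1/(q-1)}$ in $\mathrm{II}$, from doubling comparisons of $w(B)$ with $w(B_R(x))$, and from the operator norm of $\mathcal{M}_w^q$ on $L^p_w$. A secondary, essentially mechanical obstacle is ensuring that the factor $r^\alpha/\alpha$ survives the geometric-mean interpolation; this is automatic because both $\mathrm{I}$ and $\mathrm{II}$ already carry the singularity $1/\alpha$ from Lemma \ref{lem:intweight} (and its $\sigma_q$-analogue), and geometric means preserve this singularity.
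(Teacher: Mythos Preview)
Your Hedberg-type interpolation is precisely the argument underlying the reference the paper invokes (Alberico--Cianchi--Sbordone); the paper does not reproduce the proof but simply cites that theorem and then tracks the constants $C_1,\dots,C_6$ appearing there to extract the claimed $\alpha$-dependence and the explicit power of $[w]_{A_p}$. So your plan and the paper's proof are the same route, just at different levels of detail.

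Two small corrections to your sketch. First, the far-field bound should read $\mathrm{II}(x,R)\lesssim [w]_{A_q}^{\beta}\, r^{nq/p}R^{\alpha-nq/p}\bigl(\tfrac{1}{w(B)}\int_B|f|^pw\bigr)^{1/p}$; your stated prefactor $r^\alpha/\alpha$ is dimensionally inconsistent with the $R^{\alpha-nq/p}$ factor. Second, the $1/\alpha$ singularity comes only from the near-field geometric sum $\sum_{j\ge 0}2^{-j\alpha}\eqsim 1/\alpha$, not from both pieces (the far-field series produces a constant depending on $nq/p-\alpha_0$). After optimisation the singularity survives as $\alpha^{-(1-\theta)}$ with $\theta=\alpha p/(nq)$, and since $\alpha^{-\theta}$ is bounded for $\alpha\in(0,\alpha_0]$ this is $\eqsim \alpha^{-1}$. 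Neither point affects the viability of your argument.
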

\begin{proof}
  The proof can be found in~\cite[Theorem~1.1]{AlbericoCianchiSbordone09}. The additional restriction~$1<p<\frac n \alpha$ in their result is only used for the other parts of that theorem. We additionally keep track on the dependence of all constants on $\alpha$ and $p$ by providing some detail on how the constants $C_1,\dots,C_6$ in \cite{AlbericoCianchiSbordone09} depend on them: First we have $C_1 =2^n / \alpha$, $C_2=C(n,p)$, $C_3=C(n,p,\alpha_0)$ and we can choose $C_4=C(n,p,\alpha_0)/\alpha$. Then, replacing $(7)$ in \cite{AlbericoCianchiSbordone09} by \cite[Theorem 9.1.9]{Gramodern14} combined with \cite[Theorem 1.3.2]{Gra14}, we get $C_5 = C_4 (2\cdot 24^n\cdot p')^{(n-\alpha)/n}$. Finally, we can put $C_6 = 2C_5$. This finishes the proof.
\end{proof}
\begin{theorem}[Sobolev-\Poincare]
  \label{thm:sobolev_poincare}
  Let $n\geq 2$, $s\in (0,1)$, $p\in (1,\infty)$ and $w\in A_p$. For every ball $B\subset \RRn$ and $v \in W^{s,p}_w(B)$ it holds that
  \begin{align*}
    &\Bigg(\frac1{w(B)}\int_B \abs{v-\mean{v}_B}^\frac{np}{n-s} w\, dx\Bigg)^\frac{n-s}{n} \\
    &\quad \lesssim   s^{1-2p}(1-s) \frac{r^{sp}}{w(B)} \int_{B} \int_{B} \frac{\abs{v(x)-v(y)}^p}{\abs{x-y}^{sp}} \frac{w(x)w(y)}{w(B_{x,y})} dx\,dy,
  \end{align*}
  where the implicit constant depends on $n$, $p$ and $[w]_{A_p}$. Note that the mean value $\mean{v}_B$ on the left hand side can be replaced by other mean values, see Remark \ref{rem:mean}.
\end{theorem}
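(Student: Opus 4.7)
The plan is to combine the pointwise Riesz-type estimate of Lemma~\ref{lem:aux2} with the weighted Riesz potential estimate of Lemma~\ref{lem:riesz-weighted}, which supplies the gain in integrability needed to reach the Sobolev exponent $\tfrac{np}{n-s}$. First I would pick $\psi = \abs{B}^{-1}\indicator_B$ so that $\mean{v}_\psi = \mean{v}_B$ and the hypotheses of Lemma~\ref{lem:aux2} are satisfied. For $\alpha \in (0,s)$ to be chosen, that lemma gives the pointwise bound
\begin{align*}
  \abs{v(x) - \mean{v}_B} \lesssim \frac{(1-\alpha)\,r^{s-\alpha}}{(s-\alpha)^{1/p'}}\,\mathcal{I}_\alpha g(x), \qquad x \in B,
\end{align*}
where $g(z) \coloneqq \indicator_B(z) \Bigl[\int_B \frac{\abs{v(y)-v(z)}^p}{\abs{y-z}^{sp}} \frac{w(y)}{w(B_{y,z})}\,dy\Bigr]^{1/p}$ satisfies $\norm{g}_{L^p_w(B)}^p = E$, where $E$ denotes the double integral on the right-hand side of the claim.

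Next I would take the $L^{np/(n-s)}_w$-average norm of both sides over $B$ and apply Lemma~\ref{lem:riesz-weighted} with some $q \in (1,p)$ produced by that lemma (with $\alpha_0 = 1$). Provided the chosen $\alpha$ satisfies $\alpha p/q \geq s$, the Riesz-potential estimate yields
\begin{align*}
  \biggl(\frac{1}{w(B)}\int_B (\mathcal{I}_\alpha g)^{np/(n-\alpha p/q)} w\,dx \biggr)^{(n-\alpha p/q)/(np)} \lesssim \frac{r^\alpha}{\alpha}\biggl(\frac{E}{w(B)}\biggr)^{1/p},
\end{align*}
and Jensen's inequality applied to the probability measure $w\,dx/w(B)$ reduces the exponent $\tfrac{np}{n-\alpha p/q}$ back down to $\tfrac{np}{n-s}$ at no cost. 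Raising the combined estimate to the $p$-th power, the overall prefactor multiplying $r^{sp}\,E/w(B)$ becomes $\frac{(1-\alpha)^p}{(s-\alpha)^{p-1}\alpha^p}$, so it remains to select $\alpha$ to dominate this by $s^{1-2p}(1-s)$.

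I would resolve the choice of $\alpha$ by splitting into two regimes, in analogy with the proof of Theorem~\ref{thm:poincare}. Set $s_1 \coloneqq p/(2p-q) \in (1/2,1)$. For $s \in (s_1,1)$ take $\alpha = 2s-1$, so that $1-\alpha = 2(1-s)$, $s-\alpha = 1-s$ and $\alpha \eqsim 1$; the constraint $\alpha p/q \geq s$ rearranges exactly to $s \geq s_1$, which holds. The prefactor is then comparable to $(1-s)$, and since $s^{1-2p}\gtrsim 1$ on this range we get $\lesssim s^{1-2p}(1-s)$. For $s \in (0,s_1]$ take instead $\alpha = sq/p$, which makes $\alpha p/q = s$ so no Jensen reduction is needed; here $(1-\alpha)\eqsim 1$, $(s-\alpha)\eqsim s$ and $\alpha\eqsim s$, giving prefactor $\eqsim s^{1-2p}$, which is $\lesssim s^{1-2p}(1-s)$ because $1-s \geq 1-s_1 > 0$. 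The replacement of $\mean{v}_\psi$ by $\mean{v}_B$ is already built into our choice of $\psi$, and other mean values are allowed by Remark~\ref{rem:mean}.

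The main obstacle is precisely this tension between two incompatible demands: hitting the sharp Sobolev exponent $\tfrac{np}{n-s}$ through Lemma~\ref{lem:riesz-weighted} forces $\alpha p/q = s$, i.e.\ $\alpha \propto s$, which produces a prefactor $\sim s^{1-2p}$ with no decay as $s\nearrow 1$; while producing the stability factor $(1-s)$ demands $\alpha$ close to $s$ (namely $\alpha = 2s-1$), which overshoots the target integrability. The case split reconciles them by deliberately overshooting the integrability in the $s \nearrow 1$ regime and then recovering the stated exponent through Jensen on the probability measure $w\,dx/w(B)$.
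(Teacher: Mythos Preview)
Your proposal is correct and follows the paper's proof essentially line by line: the same pointwise bound from Lemma~\ref{lem:aux2}, the same application of Lemma~\ref{lem:riesz-weighted}, the same prefactor $\frac{(1-\alpha)^p}{\alpha^p(s-\alpha)^{p-1}}$, and the same two-regime choice of~$\alpha$ (your threshold $s_1=p/(2p-q)$ is exactly the paper's $(2-q/p)^{-1}$, and the choices $\alpha=sq/p$ and $\alpha=2s-1$ coincide). The only difference is cosmetic: you make the Jensen step explicit to pass from the exponent $\tfrac{np}{n-\alpha p/q}$ down to $\tfrac{np}{n-s}$, whereas the paper absorbs this into the remark that any $\alpha\in[\tfrac{q}{p}s,s)$ already yields the stated inequality.
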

\begin{proof}
  Let $\alpha\in (0,s)$. We will choose the exact value of $\alpha$ later. For $z\in B$ we define the function $f$ by
  \begin{align*}
    f(z) \coloneqq \frac{1-\alpha}{(s-\alpha)^{1/p'}}\Bigg( \int_{B} \frac{\abs{v(z)-v(y)}^p}{\abs{z-y}^{sp}} \frac{w(y)}{w(B_{z,y})} dy\Bigg)^\frac1{p}.
  \end{align*}
  Then by Lemma~\ref{lem:aux2} we have
  \begin{align*}
    \abs{v(x)-\mean{v}_B} \lesssim r^{s-\alpha} \mathcal{I}_\alpha f(x).
  \end{align*}
  Combining this with Lemma~\ref{lem:riesz-weighted} implies that we can find a $q$ independent of $\alpha$ such that
  \begin{align}\label{eq:proofSP}
    \begin{aligned}
      \lefteqn{
      \bigg(\frac{1}{w(B)}\int_B \abs{v-\mean{v}_B}^{\frac{np}{n-\alpha \frac pq}} w dx\bigg)^{\frac{n-\alpha \frac pq}{n}}}\qquad&
      \\
      &\lesssim \frac{r^{sp}}{\alpha^p}\frac{1}{w(B)}\int_B f^p wdx
      \\
      &=\frac{(1-\alpha)^p}{\alpha^p(s-\alpha)^{p-1}}\frac{r^{sp}}{w(B)}\int_B \int_B \frac{\abs{v(x)-v(y)}^p}{\abs{x-y}^{sp}} \frac{w(x)w(y)}{w(B_{z,y})} \,dx\,dy .
    \end{aligned}
  \end{align}
  We could now choose $\alpha$ to be any value in $[\frac qp s,s)$ and \eqref{eq:proofSP} would imply the claimed inequality except for the precise dependence of the constant on $s$. To achieve this we have to choose $\alpha=\alpha(s) \in [\frac qp s , s)$ in such a way that
  \begin{align}\label{eq:proofSPlast}
    \frac{(1-\alpha)^p}{\alpha^p(s-\alpha)^{p-1}}\leq c (1-s)s^{1-2p},
  \end{align}
  for some constant $c$ independent of $s$ and $\alpha$. We distinguish two cases:
  If $s\leq(2-\frac qp)^{-1}$, then we set $\alpha = \frac qp s$. This yields $(1-s)\eqsim (1-\alpha)\eqsim 1$ and $\alpha \eqsim s-\alpha \eqsim s$. Therefore \eqref{eq:proofSPlast} holds.
  If $s>(2-\frac qp)^{-1}$, then we set $\alpha = 2s-1$. This gives us $1-\alpha \eqsim 1-s\eqsim s-\alpha$ and $\alpha \eqsim s \eqsim 1$. Thus \eqref{eq:proofSPlast} holds in this case, too.
\end{proof}
\begin{remark}\label{rem:SobolevPoincare}
  For every fixed weight $w$, it is possible to improve the exponent $np/(n-s)$ on the left hand side of the Sobolev-\Poincare~inequality in Theorem~\ref{thm:sobolev_poincare} slightly. This can be done choosing the parameter $\alpha$ in the proof above to be closer to $s$ than to $\frac qp s$. However, the exponent $np/(n-s)$ is optimal in the sense that it is the largest one that works for all $w\in A_p$.
\end{remark}

The Sobolev-\Poincare~inequality extends to bounded John domains. This can be shown exactly as in Corollary \ref{cor:poincare-john}.

\begin{corollary}\label{cor:SobolevJohn}
  Let $n\geq 2$, $s\in (0,1)$, $p\in (1,\infty)$, $w\in A_p$ and $\Omega$ be a bounded $\alpha$-John domain. Then for every $v \in W^{s,p}_w(B)$ we have
  \begin{align*}
    &\Bigg(\frac1{w(\Omega)}\int_\Omega \abs{v-\mean{v}_\Omega}^\frac{np}{n-s} w\, dx\Bigg)^\frac{n-s}{n} \\
    &\quad \lesssim   s^{1-2p}(1-s) \frac{r^{sp}}{w(\Omega)} \int_{\Omega} \int_{\Omega} \frac{\abs{v(x)-v(y)}^p}{\abs{x-y}^{sp}} \frac{w(x)w(y)}{w(B_{x,y})} dx\,dy,
  \end{align*}
  where the implicit constant depends on $n$, $p$, $[w]_{A_p}$ and $\alpha$. Note that the mean value $\mean{v}_B$ on the left hand side can be replaced by other mean values, see Remark \ref{rem:mean}.
\end{corollary}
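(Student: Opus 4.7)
The plan is to follow the strategy of the proof of Corollary~\ref{cor:poincare-john} verbatim, replacing the local \Poincare~inequality with the stronger Sobolev-\Poincare~inequality from Theorem~\ref{thm:sobolev_poincare}. First I would apply the Whitney decomposition of~\cite{DieningRuzickaSchumacher10} to break the $\alpha$-John domain $\Omega$ into a locally finite, countable family of Whitney cubes $(Q_i)$ with bounded overlap. Using the decomposition theorem \cite[Theorem~4.2]{DieningRuzickaSchumacher10} together with the duality argument from \cite[Theorem~5.1]{DieningRuzickaSchumacher10} now applied at the target exponent $q \coloneqq \frac{np}{n-s}$, one obtains the analogue of~\eqref{eq:john-equivalence}:
\[
  \|v - \mean{v}_\Omega\|_{L^q_w(\Omega)} \eqsim \bigg( \sum_i \|v - \mean{v}_{Q_i}\|_{L^q_w(Q_i)}^q \bigg)^{1/q}.
\]
This is where the John geometry enters the argument; the Muckenhoupt condition $w \in A_p$ supplies the weighted machinery needed to make the duality pass.

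Next I would apply Theorem~\ref{thm:sobolev_poincare} on each cube $Q_i$ (or on a comparable enclosing ball) to obtain the local estimate
\[
  \bigg( \int_{Q_i} |v - \mean{v}_{Q_i}|^q \, w \,dx \bigg)^{p/q} \lesssim s^{1-2p}(1-s)\, r_i^{sp}\, w(Q_i)^{p/q - 1}\, \Jspw(v \,|\, Q_i \times Q_i).
\]
Raising the norm equivalence above to the $p$-th power and using that $p/q \leq 1$ together with subadditivity of $t \mapsto t^{p/q}$ on $[0,\infty)$ gives
\[
  \|v - \mean{v}_\Omega\|_{L^q_w(\Omega)}^p \lesssim \sum_i \bigg( \int_{Q_i} |v - \mean{v}_{Q_i}|^q w \,dx \bigg)^{p/q}.
\]
Substituting the local Sobolev-\Poincare~bound into this sum and invoking the bounded overlap of the Whitney cubes so that $\sum_i \Jspw(v \,|\, Q_i \times Q_i) \lesssim \Jspw(v\,|\,\Omega \times \Omega)$ yields the global estimate, up to the geometric factors discussed below.

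The main obstacle is controlling the factor $r_i^{sp}\, w(Q_i)^{p/q-1} = r_i^{sp}\, w(Q_i)^{-s/n}$ that appears on each cube: one has to show that it is bounded uniformly by $r^{sp}\, w(\Omega)^{-s/n}$, where $r$ is the diameter of $\Omega$. This bookkeeping is exactly the point where the John chain condition and the doubling property of $w$ combine to compare the cube-scale quantities with the global ones; since each $Q_i$ can be connected through a controlled chain of doubling cubes to a fixed central ball of comparable size to $\Omega$, the desired uniform control follows. Once this geometric comparison is in place, the argument closes exactly as in Corollary~\ref{cor:poincare-john}, and the stated mean value can be replaced by any of the alternatives in Remark~\ref{rem:mean} by the standard triangle inequality.
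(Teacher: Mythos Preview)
Your proposal is correct and follows the same route the paper indicates (the paper's own proof is simply the sentence ``This can be shown exactly as in Corollary~\ref{cor:poincare-john}''). You have in fact supplied more detail than the paper does: you correctly note that the decomposition equivalence~\eqref{eq:john-equivalence} must be invoked at the exponent $q=\tfrac{np}{n-s}$ (this is fine since $w\in A_p\subset A_q$ and $q$ stays in a bounded range for $s\in(0,1)$), and you correctly flag the extra factor $r_i^{sp}\,w(Q_i)^{-s/n}$ that is absent in the plain \Poincare{} case.

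One small sharpening: controlling that factor does not require any John-chain argument. It follows directly from the lower $A_p$ density bound~\eqref{Apinequality}: if $B$ is a ball of radius comparable to $r$ containing~$\Omega$, then for each Whitney cube $Q_i\subset\Omega$ one has
\[
  \frac{w(Q_i)}{w(\Omega)}\;\ge\;\frac{w(Q_i)}{w(B)}\;\gtrsim\;\Big(\frac{|Q_i|}{|B|}\Big)^{p}\;\eqsim\;\Big(\frac{r_i}{r}\Big)^{np},
\]
which after raising to the power $s/n$ gives exactly $r_i^{sp}\,w(Q_i)^{-s/n}\lesssim r^{sp}\,w(\Omega)^{-s/n}$. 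The John geometry is needed only for the decomposition equivalence itself, not for this comparison.
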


\subsection{Compact embedding}
\label{sec:compactness}

In this section we show that $\Wspwz$ embeds compactly into $L^p_w(\Omega)$. In the non fractional case, it is well known that for $p\in (1,\infty)$ and $w\in A_p$
\begin{align*}
  \norm{v-v*\rho_\epsilon}_{L^p_w(\RRn)}\lesssim \epsilon \norm{\nabla v}_{L^p_w(\RRn)}.
\end{align*}
This follows from the pointwise estimate $\abs{v-v*\rho_\epsilon}\lesssim \epsilon M(\nabla v)$ and the boundedness of the maximal operator. The details can be found, e.g., in \cite[Lemma 1.50, Theorem 1.32]{MalyZiemerBook}. The following lemma is a fractional analog of this.

\begin{lemma}\label{lem:mollifier_Lp}
  Let $s\in (0,1)$, $p\in (1,\infty)$, $w\in A_p$ and $v\in \Wspwz$. Then we have
  \begin{align*}
    \int_\RRn \abs{v-v*\rho_\epsilon}^p w(x)\,dx \lesssim \frac{\epsilon^{sp}}{s}\Jspw(v) ,
  \end{align*}
  where the implicit constant depends continuously on $n$, $p$ and $[w]_{A_p}$.
\end{lemma}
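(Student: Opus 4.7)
The plan is to split the proof into two regimes, $s\le \tfrac12$ and $s>\tfrac12$, in analogy with the proof of Theorem~\ref{thm:poincare}: a direct Hölder/$A_p$ argument suffices for the first, while for the second I need to invoke the Riesz-potential type estimate of Lemma~\ref{lem:aux2} in order to save the factor of $(1-s)$ that is otherwise lost.

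\textbf{Small $s$ regime ($s\le \tfrac12$).} Starting from $v(x)-(v*\rho_\epsilon)(x) = \int\rho_\epsilon(x-y)(v(x)-v(y))\,dy$, I would apply Hölder's inequality via the factorization $1=w(y)^{1/p}\sigma(y)^{1/p'}$ (with $\sigma=w^{1/(1-p)}$) to obtain
\begin{align*}
  |v(x)-(v*\rho_\epsilon)(x)|^p\le\Bigl(\int\rho_\epsilon(x-y)\sigma(y)\,dy\Bigr)^{p-1}\!\int\rho_\epsilon(x-y)w(y)|v(x)-v(y)|^p\,dy.
\end{align*}
Using $\|\rho_\epsilon\|_\infty\lesssim \epsilon^{-n}$ and the $A_p$ identity $w(B)\sigma(B)^{p-1}\le [w]_{A_p}|B|^p$ on $B=B_\epsilon(x)$, the first factor is at most $[w]_{A_p}\epsilon^n/w(B_\epsilon(x))$. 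Multiplying by $w(x)$, integrating, and using both $w(B_{x,y})\le w(B_\epsilon(x))$ and $|x-y|^{sp}\le \epsilon^{sp}$ yields $\int|v-v*\rho_\epsilon|^p w\,dx\lesssim \epsilon^{sp}\Jspw(v)/c_s$, which is the claim since $c_s=s(1-s)\ge s/2$ in this range.

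\textbf{Large $s$ regime ($s>\tfrac12$).} Here the previous bound is off by a factor of $(1-s)$, which I would recover by applying Lemma~\ref{lem:aux2} with $B=B_\epsilon(x)$, $\psi(y)=\rho_\epsilon(x-y)$ (so that $\mean{v}_\psi=(v*\rho_\epsilon)(x)$), and $\alpha=2s-1\in(0,s)$. Raising the lemma's estimate to the $p$-th power gives
\begin{align*}
  |v(x)-(v*\rho_\epsilon)(x)|^p\lesssim (1-s)\epsilon^{(1-s)p}\Bigl(\int_{B_\epsilon(x)} F(x,z)\frac{dz}{|x-z|^{n-\alpha}}\Bigr)^p,
\end{align*}
where $F(x,z)^p = \int_{B_\epsilon(x)}|v(y)-v(z)|^p|y-z|^{-sp}w(y)/w(B_{y,z})\,dy$. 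Applying Hölder's inequality to this Riesz-potential-type integral exactly as in the proof of Lemma~\ref{lem:aux2} (inserting $w^{1/p}\sigma^{1/p'}=1$ and invoking~\eqref{eq:w-vs-sigma-B} together with Lemma~\ref{lem:intweight}) extracts a factor $(\epsilon^\alpha/\alpha)^{p-1}$ while isolating $F^p$ against the weight $|x-z|^\alpha w(z)/w(B_{x,z})$. Multiplying by $w(x)\,dx$ and swapping $x\leftrightarrow z$ via Fubini reduces matters to bounding the inner $x$-integral $\int_{|x-y|,|x-z|\le\epsilon}|x-z|^\alpha w(x)/w(B_{x,z})\,dx\lesssim \epsilon^\alpha/\alpha$, again by Lemma~\ref{lem:intweight}. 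Since those constraints force $|y-z|\le 2\epsilon$ and the exponent arithmetic $(1-s)p+\alpha(p-1)+\alpha=sp$ works out, one arrives at
\begin{align*}
  \int|v-v*\rho_\epsilon|^pw\,dx\lesssim (1-s)\epsilon^{sp}\int\!\!\int\indicator_{\set{|y-z|\le 2\epsilon}}\frac{|v(y)-v(z)|^p}{|y-z|^{sp}}\frac{w(y)w(z)}{w(B_{y,z})}dy\,dz\lesssim \frac{\epsilon^{sp}}{s}\Jspw(v),
\end{align*}
where the last step uses $(1-s)/c_s=1/s$.

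The main obstacle is the large-$s$ case: carefully tracking the powers of $\epsilon$ and of $(1-s)$ through the Riesz-potential argument, and verifying that the choice $\alpha=2s-1$ collapses all the $(1-s)$-dependences to a single surviving $(1-s)$ factor, which then exactly cancels the $(1-s)$ hidden in $c_s=s(1-s)$.
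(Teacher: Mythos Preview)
Your argument is essentially correct but takes a different route from the paper. The paper's proof is much shorter and modular: it covers $\RRn$ by a locally finite family of balls $B_j$ of radius $\epsilon$, inserts the mean value $\mean{v}_{2B_j}$, uses the boundedness of convolution on $L^p_w$ to bound both $\int_{B_j}|v-\mean{v}_{2B_j}|^p w$ and $\int_{B_j}|(v-\mean{v}_{2B_j})*\rho_\epsilon|^p w$ by $\int_{2B_j}|v-\mean{v}_{2B_j}|^p w$, and then applies the already-established Poincar\'e inequality (Theorem~\ref{thm:poincare}) on each $2B_j$; summing over $j$ gives the claim, with the $s$-dependence inherited directly from Theorem~\ref{thm:poincare}. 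Your approach instead works pointwise via Lemma~\ref{lem:aux2} and the $A_p$/Riesz machinery, in effect re-running the proof of Theorem~\ref{thm:poincare} in situ rather than quoting it. Both yield the same bound; the paper's argument is cleaner and reuses existing results, while yours is self-contained and gives slightly more (the localized estimate over $\{|y-z|\le 2\epsilon\}$).

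One small but genuine issue: splitting at $s=\tfrac12$ does not work as written, since with $\alpha=2s-1$ the factor $\alpha^{-p}$ arising from the two applications of Lemma~\ref{lem:intweight} blows up as $s\searrow\tfrac12$. The fix is exactly the one used in the paper's proof of Theorem~\ref{thm:poincare}: split at $s=\tfrac23$ instead. For $s\le\tfrac23$ one still has $c_s\ge s/3$, so your direct H\"older argument gives $\lesssim \epsilon^{sp}/s$; for $s>\tfrac23$ one has $\alpha=2s-1\ge\tfrac13$, so $\alpha^{-p}$ is harmlessly bounded and your Riesz argument goes through.
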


\begin{proof}
  Let $B_j$ be a locally finite family of balls with radius $\epsilon$ that cover $\RRn$. Then
  \begin{align*}
    \int_\RRn \abs{v-v*\rho_\epsilon}^p w\,dx &\lesssim \sum_j \int_{B_j} \abs{v-v*\rho_\epsilon}^pw\,dx
    \\
    &\lesssim \sum_j \int_{B_j} \abs{v-\mean{v}_{2B_j}}^pw\,dx + \sum_j \int_{B_j} \abs{(v-\mean{v}_{2B_j})*\rho_{\epsilon}}^pw\,dx
    \\
    &\lesssim \sum_j \int_{2B_j} \abs{v-\mean{v}_{2B_j}}^p w\, dx
    \\
    &\lesssim \frac 1s \sum_j \epsilon^{sp} \Jspw(v\,|\,2B_j) \lesssim \frac{\epsilon^{sp}}{s}\Jspw(v),
  \end{align*}
  where in the second step we used the boundedness of the convolution in $L^p_w$ and in the fourth step we used \Poincare~inequality. This finishes the proof.
\end{proof}

We are now ready to show the compactness result.

\begin{theorem}\label{thm:compactness}
  Let $\Omega \subset \RRn$ be open and bounded, $s\in (0,1)$, $p\in (1,\infty)$ and $w\in A_p$. Then the embedding $\Wspwz \embedding L^p_w(\Omega)$ is compact.
\end{theorem}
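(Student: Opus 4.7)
The plan is to adapt the classical Fr\'echet--Kolmogorov strategy, using mollification as the compactness mechanism. Let $\{v_k\}_k \subset \Wspwz$ be a bounded sequence, say $\Jspw(v_k) \le M$ for all $k$. By Lemma~\ref{lem:embedding-simple} the sequence is also bounded in $L^p_w(\RRn)$, and since each $v_k$ vanishes outside the bounded set $\Omega$, it is supported in a common compact set.

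First I would verify that for each fixed $\epsilon > 0$ the mollified sequence $\{v_k * \rho_\epsilon\}_k$ lies in a relatively compact subset of $C(\overline{\Omega})$. Since $w \in A_p$, the dual weight $\sigma = w^{1/(1-p)}$ lies in $A_{p'}$, so $\sigma(\Omega) < \infty$; by H\"older's inequality with respect to $dx = w^{1/p} \sigma^{1/p'}\, dx$ the $v_k$ are uniformly bounded in $L^1(\Omega)$. Consequently, using $\|\rho_\epsilon\|_\infty + \epsilon \|\nabla \rho_\epsilon\|_\infty \lesssim \epsilon^{-n}$, the functions $v_k * \rho_\epsilon$ are uniformly bounded and uniformly Lipschitz on $\overline{\Omega}$ with constants depending only on $\epsilon$ and the a priori bounds. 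Arzel\`a--Ascoli then yields a uniformly convergent subsequence on $\overline{\Omega}$, and since $w(\Omega) < \infty$ uniform convergence upgrades to convergence in $L^p_w(\Omega)$.

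The next step is a diagonal extraction. Choose $\epsilon_m \searrow 0$. Starting from $\{v_k\}$, successively extract subsequences so that $\{v_k * \rho_{\epsilon_m}\}_k$ converges in $L^p_w(\Omega)$ for each $m$, then pass to the diagonal subsequence, which I still denote $\{v_k\}$. To conclude that $\{v_k\}$ itself is Cauchy in $L^p_w(\Omega)$, the key tool is Lemma~\ref{lem:mollifier_Lp}: for every $k$ and every $m$,
\begin{align*}
  \norm{v_k - v_k * \rho_{\epsilon_m}}_{L^p_w(\RRn)} \lesssim \frac{\epsilon_m^s}{s^{1/p}} \Jspw(v_k)^{1/p} \lesssim \epsilon_m^s,
\end{align*}
with a constant independent of $k$. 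Splitting
\begin{align*}
  \norm{v_j - v_k}_{L^p_w(\Omega)} \le \norm{v_j - v_j * \rho_{\epsilon_m}}_{L^p_w} + \norm{v_j * \rho_{\epsilon_m} - v_k * \rho_{\epsilon_m}}_{L^p_w(\Omega)} + \norm{v_k * \rho_{\epsilon_m} - v_k}_{L^p_w},
\end{align*}
the outer two terms are uniformly bounded by $C \epsilon_m^s$, while the middle term tends to $0$ as $j,k \to \infty$ for each fixed $m$. Given $\delta > 0$, first choose $m$ so that $2C\epsilon_m^s < \delta/2$, then $j,k$ large; this yields the Cauchy property and hence convergence in $L^p_w(\Omega)$.

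The main technical hurdle is really packaged in Lemma~\ref{lem:mollifier_Lp}, which provides the uniform equicontinuity in the $L^p_w$-norm with respect to the mollification parameter; without it the diagonal argument would not close. The only subtlety remaining in the write-up is the verification that the $v_k$ are uniformly in $L^1(\Omega)$, which relies on the $A_p$-property of $w$ through local integrability of the dual weight $\sigma$; everything else reduces to standard Arzel\`a--Ascoli and a $3\epsilon$-argument.
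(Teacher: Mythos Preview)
Your proof is correct and rests on the same key ingredient as the paper, namely Lemma~\ref{lem:mollifier_Lp}, which provides the uniform-in-$k$ bound $\norm{v_k - v_k*\rho_\epsilon}_{L^p_w} \lesssim \epsilon^s$. The difference lies in how the mollified sequence $\{v_k*\rho_\epsilon\}_k$ is handled. You use the $A_p$ condition (local integrability of the dual weight~$\sigma$) to obtain a uniform $L^1$ bound, then Arzel\`a--Ascoli and a diagonal extraction, followed by a $3\epsilon$ argument to conclude that the full sequence is Cauchy. The paper instead starts from a \emph{weakly} convergent sequence $v_m \rightharpoonup v$ in $\Wspwz$ (using the uniform convexity of Lemma~\ref{lem:Banachspace}), observes that weak $L^p_w$-convergence gives pointwise convergence of $(v_m-v)*\rho_\epsilon$, and then applies dominated convergence to the middle term before sending $\epsilon\to 0$. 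Your route is slightly more elementary and avoids any appeal to reflexivity; the paper's route avoids the diagonal extraction and works directly with the limit. Both arguments close on exactly the same estimate from Lemma~\ref{lem:mollifier_Lp}.
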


\begin{proof}
  Let $v_m \weakto v$ in $\Wspwz$. We have to show that $v_m \to v$ in $L^p_w(\Omega)$. For $\epsilon >0$ we have by Lemma \ref{lem:mollifier_Lp}
  \begin{align*}
    \int_\Omega\! \abs{v_m\!-\!v}^p w\,dx &\lesssim  \int_\Omega\! \abs{v_m\!-\!v_m\!*\rho_\epsilon}^pw\,dx+\!\!\int_\Omega\! \abs{(v_m\!-\!v)*\rho_\epsilon}^pw\,dx+\!\!\int_\Omega\! \abs{v\!-\!v\!*\!\rho_\epsilon}^pw\,dx
    \\
    &\lesssim \frac{\epsilon^{sp}}{s}\Jspw(v_m)+\int_\Omega \abs{(v_m-v)*\rho_\epsilon}^pw\,dx +\frac{\epsilon^{sp}}{s}\Jspw(v).
  \end{align*}
  By the embedding $\Wspwz \embedding L^p_w(\Omega)$ (Corollary \ref{corr:Wsp_Lpw}) we have $v_m \weakto v$ in $L^p_w(\Omega)$. This implies that $\abs{(v_m-v)*\rho_\epsilon}\xrightarrow{m}0$
  everywhere. Note that
  \begin{align*}
    \abs{(v_m-v)*\rho_\epsilon(x)}\lesssim \epsilon^{-n} \sup_m \norm{v_m-v}_{L^p_w(\Omega)} \norm{\indicator_{\widetilde{\Omega}}}_{L^{p'}_{w^{1-p}}}<\infty,
  \end{align*}
  where $\widetilde{\Omega} = \set{x \in \RRn\,:\, d(x,\Omega) \leq \epsilon}$.
  Thus, by dominated convergence, we have
  \begin{align*}
    \limsup_{m\to \infty} \int_\Omega\! \abs{v_m\!-\!v}^p w\,dx &\lesssim  \frac{\epsilon^{sp}}{s}\left(\limsup_{m\to \infty} \Jspw (v_m)+\Jspw(v)\right).
  \end{align*}
  Since $v_m \weakto v$ in $\Wspwz$ we have $\sup_m\Jspw (v_m)<\infty$. Thus, letting $\epsilon \to 0$, we have shown that $v_m \to v$ in $L^p_w(\Omega)$.
\end{proof}

\section{Regularity theory for degenerate nonlocal problems}
\label{sec:regularity}

In this section we study minimizers of the energy \eqref{eq:Jspw} with degenerate weights. We define the concept of weak (sub-)solutions and show local boundedness, Hölder continuity, and a nonlocal Harnack inequality.

For this section we fix  $0< s_0 \le s<1$, $p\in (1,\infty)$ and a weight $w\in A_p$.
\subsection{Degenerate nonlocal problems}\label{sec:degenerate_problems}

We now define nonlocal problems with degenerate weights. Recall from \eqref{eq:Jspw} that for $v \in L^1_{\loc}(\RRn)$ we set
\begin{align*}
  \Jspw (v)=c_s \int_\RRn \int_\RRn \bigg(\frac{|v(x)-v(y)|}{|x-y|^s}\bigg)^pk(x,y)\,dx\,dy ,
\end{align*}
where $k$ satisfies \eqref{eq:cond-k}. We note that  $k(x,y)=0$ when $w(x)=0$ or $w(y)=0$, hence the coefficient function $k$ can be degenerate. A typical example of $k$ can be seen in Example~\ref{example:a} below.

We say that $u\in \Wspw$ minimizes $\Jspw$ on $\Omega$ if $ \Jspw (u)\leq \Jspw(v)$ for all  $v\in \Wspw$ with $u=v$ in $\Omega^c$.

Note that even though $u$ minimizes $\Jspw$ only on $\Omega$, the nonlocal nature of our model makes it necessary that $u$ is defined on all of $\RRn$. It is possible to change the domains of integration in \eqref{eq:Jspw} from $\RRn \times \RRn$ to the smaller set $(\Omega^c \times \Omega^c)^c$. We will expand further on this in Remark \ref{rem:C_Omega}.

The following theorem asserts that for given complement data $g$ a unique minimizer of $\Jspw$ exists.

\begin{theorem}[Existence of minimizer]\label{thm:existence}
  Let $g \in \Wspw$. There exists a unique minimizer $u$ of $\Jspw$ on~$\Omega$ with complement data~$g$, i.e. it is the minimizer among all $v\in \Wspw$ with $v=g$ in $\Omega^c$.
\end{theorem}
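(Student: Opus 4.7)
The plan is to apply the direct method of the calculus of variations in the affine space $\mathcal{A}_g \coloneqq g + \Wspwz = \{v \in \Wspw : v-g \in \Wspwz\}$, which is non-empty (it contains $g$) and convex. Uniqueness will follow from strict convexity of $\Jspw$, which was already noted after \eqref{eq:Jspw}.

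For existence, I would first pick a minimizing sequence $(u_m) \subset \mathcal{A}_g$, so $\Jspw(u_m) \to \inf_{\mathcal{A}_g} \Jspw \le \Jspw(g) < \infty$. Writing $\phi_m \coloneqq u_m - g \in \Wspwz$, the semi-norm triangle inequality gives
\begin{align*}
  \abs{\phi_m}_{\Wspw} \le \abs{u_m}_{\Wspw} + \abs{g}_{\Wspw},
\end{align*}
so $(\phi_m)$ is bounded in $(\Wspwz, \abs{\cdot}_{\Wspw})$. Since by Lemma~\ref{lem:Banachspace} this is a uniformly convex, hence reflexive, Banach space, a subsequence (not relabeled) satisfies $\phi_m \weakto \phi$ in $\Wspwz$. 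Set $u \coloneqq g + \phi \in \mathcal{A}_g$.

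For the lower semicontinuity step, I would use the isometry $T \colon \Wspwz \to Z$ from the proof of Lemma~\ref{lem:Banachspace}, where $Z$ is a uniformly convex Banach space and $\Jspw(v) = \norm{Tv}_Z^p$ up to the constants coming from \eqref{eq:cond-k}; more precisely one has $\Jspw(v) \eqsim \norm{T(v-g) + Tg}_Z^p$ for $v \in \mathcal{A}_g$. The functional on the right is the $p$-th power of a norm composed with a continuous affine map in $\phi = v - g$, hence convex and strongly continuous on $\Wspwz$, and therefore weakly lower semicontinuous. This yields
\begin{align*}
  \Jspw(u) \le \liminf_{m \to \infty} \Jspw(u_m) = \inf_{\mathcal{A}_g} \Jspw,
\end{align*}
so $u$ is a minimizer.

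For uniqueness, suppose $u_1, u_2 \in \mathcal{A}_g$ both minimize $\Jspw$. Then $\tfrac{u_1 + u_2}{2} \in \mathcal{A}_g$, and strict convexity of $\Jspw$ forces $u_1 = u_2$ almost everywhere (equivalently, $T u_1 = T u_2$ in $Z$, which means $u_1 - u_2$ is constant, and since this difference lies in $\Wspwz$ and vanishes on $\Omega^c \neq \emptyset$, the constant is zero). No step is really hard; the only point requiring some care is verifying that weak convergence in $\Wspwz$ of $\phi_m$ actually yields weak lower semicontinuity of $\Jspw(g + \phi_m)$, which is handled cleanly via the isometry $T$ and the fact that $T$ sends weak convergence in $\Wspwz$ to weak convergence in $Z$.
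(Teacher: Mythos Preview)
Your proof is correct and follows the same overall direct-method scheme as the paper, but you handle the lower semicontinuity step differently. The paper, after extracting a weakly convergent subsequence in $\Wspwz$, invokes the compact embedding $\Wspwz \hookrightarrow L^p_w(\Omega)$ (Theorem~\ref{thm:compactness}) to upgrade to pointwise a.e.\ convergence along a further subsequence, and then applies Fatou's lemma to the integrand of $\Jspw$. You instead argue abstractly: the map $\phi \mapsto \Jspw(g+\phi)$ is convex and norm-continuous on $\Wspwz$ (via the isometry $T$ into $Z$), hence weakly lower semicontinuous. Your route is a bit more economical in that it does not need the compactness result; the paper's route is more hands-on and makes the lower semicontinuity completely explicit at the level of the integrand. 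One small remark: the relation $\Jspw(v) = \norm{Tv}_Z^p$ is in fact an equality, not just an equivalence, since $Z$ in Lemma~\ref{lem:Banachspace} is defined with the actual kernel $k$; the constants from \eqref{eq:cond-k} do not enter here.
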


\begin{proof}
  Set $M\coloneqq \min_{v\in \Wspwz} \Jspw (v+g)$. Note that $M>0$ and pick a sequence $v_k \in \Wspwz$ such that $\Jspw (v_k+g)\searrow M$ as $k\rightarrow \infty$. Since
  \begin{align*}
    \abs{v_k}_{\Wspw} \leq \left(\Jspw(v_k+g)\right)^{\frac 1p} + \abs{g}_{\Wspw},
  \end{align*}
  we have that $v_k$ is bounded in $\Wspwz$. By Lemma \ref{lem:Banachspace} we can pick a subsequence that converges weakly to some $v^* \in \Wspwz$. By the compact embedding (Lemma \ref{thm:compactness}) we can pick a further subsequence which converges to $v^*$ in $L^p_w(\Omega)$. From this we can extract a final subsequence that converges to $v^*$ pointwise almost everywhere. Let us thus assume without loss of generality that $v_k \rightarrow v$ pointwise almost everywhere. Using Fatou's Lemma, this implies
  \begin{align*}
    M \leq \Jspw (v^*+g)\leq \liminf_{k\rightarrow \infty} \Jspw (v_k+g) =M.
  \end{align*}
  Thus $u\coloneqq v^*+g$ is a minimizer. The uniqueness follows by the strict convexity from Lemma \ref{lem:Banachspace}.
\end{proof}

We notice that if  $u\in \Wspw$ is the unique minimizer from Theorem \ref{thm:existence}, then it satisfies
\begin{align}\label{weakform}
  \int_{\RRn} \int_{\RRn} k(x,y)\frac{|u(x)-u(y)|^{p-2}(u(x)-u(y))(\eta(x)-\eta(y))}{|x-y|^{sp}}\,dx\, dy =0
\end{align}
for every $\eta  \in \Wspwz$. This follows directly from the equation $\frac{d}{dt}  \Jspw (u+t \eta)\big| _{t=0}=0$. Moreover, if $u\in \Wspw$ satisfies  \eqref{weakform}, then it is the minimizer of $\Jspw$ with respect to the complement data $g$.

Note that by symmetry, \eqref{weakform} is equivalent to the equation
 $$
 \int_{\Omega} \bigg(\,\int_{\RRn} \frac{|u(x)-u(y)|^{p-2}(u(x)-u(y))}{|x-y|^{sp}}\frac{k(x,y)}{w(x)}\, dy \bigg) \eta(x)w(x)\,dx =0,
$$
if the inner integral is finite for almost every $x\in \Omega$. Then the last integral implies that for almost every $x\in \Omega$,
$$
\mathcal L ^{s}_{p,w}  u(x) \coloneqq  c_s\int_{\RRn} \frac{|v(x)-v(y)|^{p-2}(u(x)-u(y))}{|x-y|^{sp}}\frac{k(x,y)}{w(x)}\, dy =0.
$$
Note that $\lambda \frac{w(y)}{w(B_{x,y})}\le\frac{k(x,y)}{w(x)} \le \Lambda \frac{w(y)}{w(B_{x,y})}$. Therefore, \eqref{weakform} can be regarded as a weak form of the following degenerate nonlocal integro-differential equation:
\begin{equation}\label{PDE}
\mathcal L ^{s}_{p,w}  u  =0  \quad \text{in }\ \Omega.
\end{equation}

\begin{definition}
We say that $u\in \Wspw$ is a weak sub-(super-)solution to \eqref{PDE} if
\begin{equation}\label{weakform1}
 \int_{\RRn} \int_{\RRn} k(x,y)\frac{|u(x)-u(y)|^{p-2}(u(x)-u(y))(\eta(x)-\eta(y))}{|x-y|^{sp}}\,dx\, dy \le (\ge)0
\end{equation}
for every $\eta  \in \WspwO$ with $\eta\ge 0$ a.e. in $\RRn$. If $u$ is both a weak subsolution and a weak supersolution, the $u$ is called a weak solution.
\end{definition}
Note that if $u$ is a weak solution to \eqref{PDE}, then it satisfies \eqref{weakform}.

\begin{example}\label{example:a}
  Let $w(z) = \abs{z}^\gamma$ with $\gamma \in (-n,n(p-1))$. Then $w \in A_p$. We claim that
  \begin{align}
    \label{eq:ex1}
    k(x,y) &= \frac{\abs{x}^\gamma \abs{y}^\gamma}{(\abs{x}+\abs{y})^\gamma} \abs{x-y}^{-n}
  \end{align}
  with $x,y \in \RRn$ satisfies the assumption~\eqref{eq:cond-k}. To verify this, we have to show that
  \begin{align}
    \label{eq:ex2}
    w(B_{x,y}) \eqsim (\abs{x}+\abs{y})^\gamma \abs{x-y}^n.
  \end{align}
  To prove this we distinguish two cases. Firstly, suppose that $0 \notin 2B_{x,y}$. Then $\frac 16 (\abs{x}+\abs{y}) \leq\abs{z} \leq \abs{x}+\abs{y}$ for all $z \in B_{x,y}$  and therefore
  \begin{align*}
    w(B_{x,y}) \eqsim \int_{B_{x,y}} (\abs{x}+\abs{y})^\gamma \,dz \eqsim (\abs{x}+\abs{y})^\gamma \abs{x-y}^n.
  \end{align*}
  This proves~\eqref{eq:ex2} for $0 \notin 2B_{x,y}$.  Secondly, suppose that $0 \in 2B_{x,y}$. Then $\abs{x-y} \leq \abs{x}+\abs{y} \leq 3 \abs{x-y}$. Using that~$w$ is doubling, $B_{x,y} \subset 3B_{\abs{x-y}}(0)$ and $B_{\abs{x-y}}(0)\subset 4 B_{x,y}$ we calculate
  \begin{alignat*}{3}
    w(B_{x,y}) &\eqsim \int_{B_{\abs{x-y}}(0)} \abs{z}^\gamma\,dz &&\eqsim \abs{x-y}^{n+\gamma} &&\eqsim (\abs{x}+\abs{y})^\gamma \abs{x-y}^n.
  \end{alignat*}
  This proves~\eqref{eq:ex2} for $0 \in 2B_{x,y}$. Overall, we have proved that $w$ and $k$ satisfy~\eqref{eq:cond-k}.
\end{example}
For simplicity, we denote
$$
d\mu(x) = w(x)\,dx ,
\quad\text{and}\quad
\fint_U f d \mu = \frac{1}{\mu(U)} \int_U f\, d \mu,
$$
and define the nonlocal tail term with respect to the measure $\mu$ by
\begin{equation}\label{tail}
\mathrm{Tail}(f;\rho) = \mathrm{Tail}(f;x_0,\rho) \coloneqq \bigg((1-s)\rho^{sp} \int_{\RRn \setminus B_{\rho}(x_0)}\frac{|f(x)|^{p-1}}{|x-x_0|^{sp}}\frac{d\mu(x)}{w(B_{x,x_0})}\bigg)^{\frac{1}{p-1}}.
\end{equation}
Note that if $f \in \Wspw$ then $\mathrm{Tail}(f;x_0,\rho)<\infty$ at every $x_0$ and $\rho$ such that $B_{\rho}(x_0)\subset \Omega$.  Moreover, if $s\nearrow 1$ then $\mathrm{Tail}(f;x_0,\rho)$ vanishes.

We further recall the following density property of $A_p$ weights (see \cite[Theorem 7.2.7]{Gra14}): For every ball $B\subset \RRn$ and every  measurable $D\subset B$,
\begin{equation}\label{Apinequality}
  \frac{1}{[w]_{p}} \left( \frac{|D|}{|B|}\right)^p \le \frac{w(D)}{w(B)} \le  c \left( \frac{|D|}{|B|}\right)^{\sigma}
\end{equation}
for some $c,\sigma>0$ depending on $p$ and $[w]_{A_p}$.

Let us state three fundamental regularity results for the weak solution to \eqref{PDE}. They are the main results of the regularity part of this paper.

\begin{theorem}[Local boundedness]
\label{thm:bounded}
Let $u \in \Wspw$ be a weak subsolution to \eqref{PDE}. For every $\delta>0$ and $B_{r}=B_r(x_0) \Subset \Omega$, there holds
\begin{align}\label{lb}
\underset{B_{r/2}}{\mathrm{ess\,sup}}\, u \le c_b \delta^{-\frac{p-1}{p}\frac{n}{s_0}} \bigg(\fint_{B_{r}}u_+^p\,d\mu \bigg)^{1/p} + \delta\, \mathrm{Tail}(u_{+};x_0,r/2)
\end{align}
for some $c_{b}= c_{b}(n,p,\lambda,\Lambda,s_0,[w]_{A_p})>0$.
Moreover, if $u$ is a weak solution to \eqref{PDE}, then we have
\begin{equation}\label{lb1}
\| u \|_{L^\infty(B_{r/2})}  \le c_b \delta^{-\frac{p-1}{p}\frac{n}{s_0}} \bigg(\fint_{B_{r}} |u|^p\,d\mu \bigg)^{1/p} + \delta\,   \mathrm{Tail}(u;x_0,r/2),
\end{equation}
and thus $u\in L^\infty_{\mathrm{loc}}(\Omega)$.
\end{theorem}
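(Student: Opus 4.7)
My plan is to adapt the nonlocal De Giorgi iteration of Di Castro--Kuusi--Palatucci to the degenerate weighted setting, relying crucially on the $s$-stable Sobolev--\Poincare{} inequality of Theorem~\ref{thm:sobolev_poincare} to keep all constants uniform for $s \in [s_0,1)$. First I would derive a Caccioppoli inequality at an arbitrary truncation level $k>0$ on pairs of concentric balls $B_\rho \subset B_\sigma \subseteq B_r$. Testing \eqref{weakform1} with $\eta = (u-k)_+ \phi^p$, where $\phi$ is a standard smooth cutoff with $\phi \equiv 1$ on $B_\rho$, $\support\phi \subset B_\sigma$ and $\|\nabla \phi\|_\infty \lesssim (\sigma-\rho)^{-1}$, together with the algebraic inequality
\begin{align*}
|a-b|^{p-2}(a-b)\bigl(a_+\phi_1^p - b_+\phi_2^p\bigr)
&\ge \tfrac{1}{c}\bigl|a_+\phi_1 - b_+\phi_2\bigr|^p \\
&\quad - c\,|\phi_1-\phi_2|^p \max\{a_+,b_+\}^p
\end{align*}
applied to $a = u(x)-k$, $b = u(y)-k$, together with \eqref{eq:cond-k} and Lemma~\ref{lem:intweight} to handle the interaction between $|\phi(x)-\phi(y)|$ and $w$, I expect to arrive at
\begin{align*}
\Jspw\bigl((u-k)_+\phi\,\big|\,B_\sigma\bigr)
&\lesssim \frac{1}{(\sigma-\rho)^{sp}}\int_{B_\sigma}(u-k)_+^p\,d\mu \\
&\quad + \mathrm{Tail}((u-k)_+;x_0,r/2)^{p-1} \int_{B_\sigma}(u-k)_+\,d\mu.
\end{align*}
The reduction of the natural per-point tail supremum over $x\in B_\rho$ to a single centre $x_0$ uses the comparability $w(B_{x,y})\eqsim w(B_{x_0,y})$ for $y$ outside $B_{r/2}$, which follows from Lemma~\ref{lem:doubling-annulus} and is where the Muckenhoupt property (rather than mere doubling) enters.

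Next I would apply Theorem~\ref{thm:sobolev_poincare} to the compactly supported function $(u-k)_+\phi$, which yields the gain of integrability
\[
\Bigl(\fint_{B_\rho}(u-k)_+^{p^*}\,d\mu\Bigr)^{p/p^*}
\lesssim \frac{r^{sp}}{w(B_\sigma)}\,\Jspw\bigl((u-k)_+\phi\,\big|\,B_\sigma\bigr),
\]
with $p^* = np/(n-s)$ satisfying $p^*/p \ge 1 + s_0/n$ uniformly. Combining with the Caccioppoli estimate and Chebyshev's inequality on super-level sets $\{u > k\}$ produces the standard recursive bound on
\[
A_j := \fint_{B_{r_j}}(u-k_j)_+^p\,d\mu,\qquad r_j := \tfrac{r}{2}+2^{-j-1}r,\quad k_j := k(1-2^{-j}),
\]
of the form $A_{j+1} \le C\,b^{j}\,k^{-p\beta}\,A_j^{1+\beta}$ with $\beta = s_0/n$ and $C,b$ depending only on $n,p,\lambda,\Lambda,s_0,[w]_{A_p}$. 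By the classical iteration lemma, $A_j\to 0$ (hence $u \le k$ a.e.\ on $B_{r/2}$) provided $A_0 \le (Cb^{1/\beta})^{-1/\beta}k^p$.

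To produce the free parameter $\delta>0$, I would finally choose
\[
k := c_b\,\delta^{-\frac{p-1}{p}\frac{n}{s_0}}\Bigl(\fint_{B_r}u_+^p\,d\mu\Bigr)^{1/p} + \delta\,\mathrm{Tail}(u_+;x_0,r/2),
\]
with $c_b$ large: the first summand makes the smallness condition $A_0 \le (Cb^{1/\beta})^{-1/\beta}k^p$ hold, and the second summand absorbs the tail contribution, since the factor $k^{1-p}\mathrm{Tail}(u_+)^{p-1}$ arising in the Caccioppoli term is at most $\delta^{p-1}$, which cancels against the iteration's growth in $\delta^{-1}$. The bound \eqref{lb1} for solutions follows by applying \eqref{lb} to $u$ and to $-u$ and using $|u| \le u_+ + (-u)_+$ together with subadditivity of the tail. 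The main obstacle I foresee is the Caccioppoli step in the weighted nonlocal setting: the symmetrized double integrals involving $k(x,y)$ and the geometric ball $B_{x,y}$ must be split into a local part controlled by Lemma~\ref{lem:intweight} and a complementary part extracting exactly the $(1-s)$-normalised tail of \eqref{tail}, and throughout one must keep the constants linear in $[w]_{A_p}$ so that all subsequent estimates genuinely only depend on the Muckenhoupt characteristic.
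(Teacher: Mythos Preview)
Your plan matches the paper's proof: Proposition~\ref{lem.caccio} is exactly your Caccioppoli inequality (obtained by testing with $\eta=\phi^p(u-k)_+$), and the De Giorgi iteration over dyadic radii $r_j$ and levels $k_j$, combined with the Sobolev--\Poincare{} gain from Theorem~\ref{thm:sobolev_poincare} and the final choice of $k$ as the sum of an averaged term and $\delta\,\mathrm{Tail}(u_+;x_0,r/2)$, is precisely what the paper does. One sign slip to fix: from $k\ge\delta\,\mathrm{Tail}(u_+)$ you get $k^{1-p}\mathrm{Tail}(u_+)^{p-1}\le\delta^{-(p-1)}$, not $\delta^{p-1}$; this $\delta^{-(p-1)}$ enters the iteration constant $b_1$ of Lemma~\ref{lem:iteration}, and then $b_1^{-1/\beta}$ with $\beta=\kappa-1=s/(n-s)$ produces the exponent $-\tfrac{p-1}{p}\tfrac{\kappa}{\kappa-1}=-\tfrac{p-1}{p}\tfrac{n}{s}\ge -\tfrac{p-1}{p}\tfrac{n}{s_0}$ on $\delta$ in the first summand of $k$.
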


\begin{theorem}[Hölder continuity]
\label{thm:Holder}
Let $u \in \Wspw$ be a weak solution to \eqref{PDE}. Then $u\in C^{\alpha}_{\loc}(\Omega)$ for some $\alpha\in (0,1)$, and for every $B_{r}=B_{r}(x_0)\Subset\Omega$, there holds
\begin{align*}
[u]_{C^{\alpha}(B_{r/2})} \le c\, r^{-\alpha} \bigg\{\bigg(\fint_{B_{r}} |u|^p\,d\mu \bigg)^{1/p} +   \mathrm{Tail}(u;x_0,r/2))\bigg\},
\end{align*}
where $c= c(n,p,\lambda,\Lambda,s_0,[w]_{A_p})>0$.
\end{theorem}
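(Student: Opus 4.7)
The plan is to follow the nonlocal De Giorgi strategy of Di Castro--Kuusi--Palatucci, adapted to the degenerate weighted setting, using the tools already developed in this paper. The argument will be driven by an oscillation decay $\operatorname*{osc}(u, B_{r/2})\le\theta\operatorname*{osc}(u, B_r)+\textup{(tail)}$ with some $\theta\in(0,1)$ independent of $s\in[s_0,1)$, from which the Hölder estimate follows by a standard iteration together with the local boundedness bound \eqref{lb1}.

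The first ingredient will be a weighted nonlocal Caccioppoli estimate: for a weak subsolution $u$, a level $k$, a ball $B_r\Subset\Omega$, and a cutoff $\varphi\in C_c^\infty(B_r)$ with $\varphi\equiv 1$ on $B_{r/2}$, testing \eqref{weakform1} with $\eta=(u-k)_+\varphi^p$ and using \eqref{eq:cond-k} will give a bound of the form
\begin{equation*}
\Jspw\bigl((u-k)_+\varphi\,\big|\,B_r\bigr)\;\lesssim\;\frac{1}{(r/2)^{sp}}\int_{B_r}(u-k)_+^p\,d\mu+\bigl(\operatorname{Tail}((u-k)_+;x_0,r/2)\bigr)^{p-1}\int_{B_r}(u-k)_+\,d\mu,
\end{equation*}
with constants depending on $n,p,\lambda,\Lambda,[w]_{A_p}$ but stable as $s\nearrow 1$ thanks to the factor $c_s$. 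The second ingredient is the weighted Sobolev--Poincaré inequality (Theorem~\ref{thm:sobolev_poincare}): combining it with the Caccioppoli estimate produces a self-improving inequality on superlevel sets $A(k,r)=\{u>k\}\cap B_r$ of the form
\begin{equation*}
\fint_{B_{r/2}}(u-k)_+^p\,d\mu\;\le\;C\biggl(\frac{\mu(A(k,r))}{\mu(B_r)}\biggr)^{\!\varepsilon}\!\!\fint_{B_r}(u-k)_+^p\,d\mu+\textup{(tail)},
\end{equation*}
where $\varepsilon>0$ is the Sobolev gain $s/(n-s)\gtrsim s_0/n$. A standard De Giorgi iteration on the sequence of levels $k_j$ and radii $r_j$ then yields the key \emph{measure-to-pointwise} lemma: if $u\le M$ on $B_r$ and $\mu(\{u\ge M-h\}\cap B_r)\le\nu\,\mu(B_r)$ for a sufficiently small $\nu=\nu(n,p,\lambda,\Lambda,s_0,[w]_{A_p})>0$, then $u\le M-h/2$ on $B_{r/2}$, modulo an additive contribution from $\operatorname{Tail}(u_+;x_0,r)$.

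The heart of the argument is the \emph{oscillation reduction}: given $B_R\Subset\Omega$, set $M_j=\operatorname{ess\,sup}_{B_{r_j}}u$, $m_j=\operatorname{ess\,inf}_{B_{r_j}}u$ with $r_j=\eta^j R$ for some fixed $\eta\in(0,1)$, and consider the renormalized function $u_j:=2(u-(M_j+m_j)/2)/(M_j-m_j)$. Either $u_j$ or $-u_j$ belongs to the set $\{v\ge 0\}$ on at least half of $B_{r_j}$ in weighted measure, by \eqref{Apinequality}. A weighted logarithmic estimate (obtained by testing \eqref{weakform1} on a nonnegative supersolution with $\eta=\varphi^p(u+d)^{1-p}$, where $d\sim (M_j-m_j)+\operatorname{Tail}$, and using $\log(u+d)$ together with Corollary~\ref{cor:poincare-john}) will show that the set where $u_j\ge 2^{-\ell}-1$ inside a smaller ball has relative $\mu$-measure at most $C/\ell^{p-1}$; choosing $\ell$ large enough then feeds the measure-to-pointwise lemma and forces $\operatorname*{osc}_{B_{r_{j+1}}}u\le (1-2^{-\ell-2})\operatorname*{osc}_{B_{r_j}}u+\eta^{j\alpha_0}\operatorname{Tail}(u;x_0,R)$. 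Iterating this inequality and summing the geometric tail contributions yields Hölder continuity with exponent $\alpha>0$ depending only on the universal parameters and the Sobolev gain.

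The hard part will be the logarithmic lemma in the weighted setting and the bookkeeping of the nonlocal tail across iterations: one must prove, for a nonnegative supersolution, a bound of the form
\begin{equation*}
\int_{B_r}\!\!\int_{B_r}\Bigl|\log\tfrac{u(x)+d}{u(y)+d}\Bigr|^p\,\dwxy\;\le\;C\,\frac{w(B_r)}{r^{sp}\,c_s},
\end{equation*}
with $d\sim\operatorname{Tail}(u_-;x_0,r)$, and do so with constants stable as $s\nearrow 1$; here the ellipticity \eqref{eq:cond-k}, the Muckenhoupt doubling estimates of Lemma~\ref{lem:doubling-annulus} and Lemma~\ref{lem:intweight}, and the splitting between the local interaction (handled by the Caccioppoli/Sobolev--Poincaré chain) and the long-range tail (handled exactly as in the classical nonlocal case but with the weighted tail defined in \eqref{tail}) are all essential. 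Once that lemma is available, the remainder of the argument is a faithful adaptation of the Di Castro--Kuusi--Palatucci iteration, and Theorem~\ref{thm:bounded} absorbs the $L^p_w$ norm of $u$ into the final Hölder seminorm estimate.
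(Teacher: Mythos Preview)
Your proposal is correct and follows essentially the same approach as the paper: the De~Giorgi oscillation-decay scheme of Di~Castro--Kuusi--Palatucci, driven by the weighted Caccioppoli estimate (Proposition~\ref{lem.caccio}), the logarithmic estimate for nonnegative supersolutions (Proposition~\ref{lem.log}), the density-improvement lemma combining these with Sobolev--Poincar\'e (Lemma~\ref{lem:density}), and the iterated tail bookkeeping across scales. The paper works with $u_i=u-m_i$ (or its reflection $\nu_i-(u-m_i)$) rather than your fully normalized $u_j$, and the log estimate yields a density bound of order $C/\log(1/\epsilon)$ rather than your $C/\ell^{p-1}$, but these are cosmetic differences.
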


\begin{theorem}[Harnack's inequality]
\label{thm:Harnack}
Let  $u \in \Wspw$ be a weak solution to \eqref{PDE}. If $u\ge0 $ in some $B_{2r}=B_{2r}(x_0)\Subset\Omega$, then there holds
\begin{equation}\label{eq:Harnack}
\sup_{B_r} \le c \left( \inf_{B_r}u +   \mathrm{Tail}(u_-;x_0,r)\right)
\end{equation}
for some $c= c(n,p,\lambda,\Lambda,s_0,[w]_{A_p})>0$.
\end{theorem}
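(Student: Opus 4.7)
The proof follows the Moser iteration scheme, adapted to the nonlocal weighted setting in the spirit of \cite{DiCastroKuusiPalatucci14,DiCastroKuusiPalatucci16}, with the weighted Sobolev-\Poincare{} inequality of Theorem~\ref{thm:sobolev_poincare} driving the self-improving step. The local boundedness theorem (Theorem~\ref{thm:bounded}) already controls $\mathrm{ess\,sup}_{B_{r/2}} u$ by an $L^p_w$-mean of $u_+$ on $B_r$ plus $\mathrm{Tail}(u_+;x_0,r/2)$, and since $u\geq 0$ on $B_{2r}$ the positive tail is dominated by a local $L^p_w$-mean on $B_{2r}$ plus $\mathrm{Tail}(u_-;x_0,r)$. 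The remaining task is therefore to prove the reverse bound
\begin{align*}
  \bigg(\fint_{B_{2r}} u^p \, d\mu\bigg)^{1/p} \lesssim \mathrm{ess\,inf}_{B_r} u + \mathrm{Tail}(u_-;x_0, r),
\end{align*}
after which chaining with Theorem~\ref{thm:bounded} yields~\eqref{eq:Harnack}.

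The starting point is a weighted Caccioppoli-type estimate obtained by testing~\eqref{weakform1} with $\eta = \phi^p (u+d)^{\beta}$, where $\beta \in \RR$ is admissible, $\phi \in C_c^\infty$ is a standard cutoff localized to a subball of $B_{2r}$, and the shift $d \eqsim \mathrm{Tail}(u_-;x_0,2r) + \ep$ is exactly what absorbs the cross contributions generated by integration over $\RRn \setminus \support \phi$. Formally choosing $\beta = 1-p$ produces the logarithmic Caccioppoli estimate
\begin{align*}
  c_s \int_{B_\rho} \int_{B_\rho} \bigg|\log\frac{u(x)+d}{u(y)+d}\bigg|^p k(x,y)\,dx\,dy \lesssim \rho^{-sp} w(B_\rho),
\end{align*}
which, combined with the weighted \Poincare{} inequality of Theorem~\ref{thm:poincare}, shows that $\log(u+d)$ lies in a $\mathrm{BMO}$-type space with respect to $d\mu$. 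Since $d\mu$ is doubling, the John-Nirenberg inequality on the space of homogeneous type $(\RRn, d\mu)$ furnishes a universal exponent $p_0 > 0$ with
\begin{align*}
  \bigg(\fint_{B_r}(u+d)^{p_0} d\mu\bigg)^{1/p_0} \bigg(\fint_{B_r}(u+d)^{-p_0} d\mu\bigg)^{1/p_0} \leq C.
\end{align*}

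The Moser iteration itself runs on both sides of the exponent axis. For $q > 0$, the Caccioppoli estimate with $\beta = -q-p+1$ combined with Theorem~\ref{thm:sobolev_poincare} (with self-improving gain $\chi = n/(n-s) > 1$) yields the reverse-H\"older step
\begin{align*}
  \bigg(\fint_{B_{\rho'}}(u+d)^{-\chi q} d\mu\bigg)^{\frac{1}{\chi q}} \lesssim \bigg(\fint_{B_\rho}(u+d)^{-q} d\mu\bigg)^{\frac{1}{q}},
\end{align*}
and iterating $q_j = \chi^j q_0$ gives $\big(\fint (u+d)^{-q_0}\big)^{-1/q_0} \lesssim \mathrm{ess\,inf}_{B_{r/2}}(u+d)$. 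A parallel Moser iteration on positive exponents ranging over $(0,p)$ upgrades the $L^{p_0}_w$-mean on the left of the John-Nirenberg inequality to the full $L^p_w$-mean. Chaining the negative-power Moser, the John-Nirenberg crossover, and the positive-power Moser delivers the target inequality.

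The principal obstacles are threefold. First, every cutoff in the Caccioppoli estimates produces weighted tail contributions from $\RRn \setminus \support \phi$, and these must be absorbed precisely by the chosen shift $d \eqsim \mathrm{Tail}(u_-; x_0, 2r)$; the bookkeeping relies on the integral identities of Lemma~\ref{lem:intweight} at every scale. Second, the algebraic inequalities controlling $|\log\tfrac{a+d}{b+d}|^p$ and $\big||a+d|^{\beta/p}-|b+d|^{\beta/p}\big|^p$ by $|a-b|^p$ against the degenerate kernel rely on Simon-type combinatorial identities adapted from~\cite{DiCastroKuusiPalatucci16}, now carried out with the full factor $k(x,y)$ rather than $|x-y|^{-n}$. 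Third, stability as $s \nearrow 1$ requires that the factor $c_s = s(1-s)$ in $\Jspw$ and the explicit $(1-s)$ on the right-hand side of Theorem~\ref{thm:sobolev_poincare} cancel correctly with the geometric factor $\chi - 1 \eqsim (1-s)/n$ lost at each Moser step, so that the iteration constants remain bounded uniformly for $s \in [s_0, 1)$.
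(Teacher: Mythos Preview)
Your overall strategy---Moser iteration plus a John--Nirenberg crossover---is a legitimate alternative, but two of your reductions do not hold as stated. The claim that ``since $u\geq 0$ on $B_{2r}$ the positive tail is dominated by a local $L^p_w$-mean on $B_{2r}$ plus $\mathrm{Tail}(u_-;x_0,r)$'' is false: on $\RRn\setminus B_{2r}$ there is no sign information, so $u_+$ there is not controlled by $u_-$ at all. Bounding $\mathrm{Tail}(u_+)$ by $\mathrm{Tail}(u_-)$ plus local data genuinely requires the equation; this is Lemma~\ref{lem:tail.pm}, and it produces $\sup_{B_r} u$ on the right, not an $L^p_w$-mean. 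You are therefore forced into a sup-absorption step, after which the interpolated form of Theorem~\ref{thm:bounded} (sup controlled by an $L^q_w$-mean for arbitrarily small $q>0$) makes your positive-exponent Moser upgrade to $L^p_w$ unnecessary---which is fortunate, since for supersolutions that ladder only reaches exponents below $\chi(p-1)=\tfrac{n(p-1)}{n-s}$, and this need not exceed $p$.

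The paper also arrives at the weak Harnack inequality differently. Instead of Moser on negative exponents plus John--Nirenberg, it proves an expansion-of-positivity lemma (Lemma~\ref{lem:density}) from the logarithmic estimate and then runs a Krylov--Safonov covering argument (Lemma~\ref{lem:covering}) in the doubling space $(\RRn,d\mu)$ to obtain $\big(\fint_{B_r} u^{p_0}\,d\mu\big)^{1/p_0}\lesssim \inf_{B_r} u + \mathrm{Tail}(u_-)$ directly (Theorem~\ref{thm:weak.harnack}). Harnack then follows by combining this with the interpolated sup bound and Lemma~\ref{lem:tail.pm}, absorbing the resulting $\delta\sup u$ term by a standard iteration. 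Your Moser scheme would work once the tail step is repaired, but the De Giorgi--type route of the paper stays closer to \cite{DiCastroKuusiPalatucci14} and avoids the John--Nirenberg machinery entirely.
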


In the following subsections, we will prove the three theorems above.

\begin{remark}\label{rem:C_Omega}
  In Theorem \ref{thm:bounded}, Theorem \ref{thm:Holder}, and Theorem \ref{thm:Harnack} the assumption $u\in \Wspw$ can be weakened. It is not necessary to assume any regularity on the complement data $g=u|_{\overline{\Omega} ^c}$ other than some weighted integrability. Instead, it is sufficient to assume that $\Jspw ((\Omega^c \times \Omega ^c)^c)<\infty$, which also implies that  $\operatorname{Tail}(u;x_0,r)<\infty$ for every $B_r(x_0)\subset \Omega$. This approach has been introduced in \cite{ServValdinoci12,FelsKassmannVoigt15}. It is also possible to give an intrinsic definition of the complement data $g$ via nonlocal trace spaces defined only on $\Omega^c$. For this approach see \cite{GrubeHensiek24,GrubeKassmann23}. Our results remain valid with all those adaptions.
\end{remark}

\subsection{Local boundedness}

In this section we prove Theorem \ref{thm:bounded}.
We start with a Caccioppoli type estimate.
\begin{proposition}
\label{lem.caccio}
Let $u\in \Wspw$ is a weak subsolution  to \eqref{PDE}.
Then for any $k\in \RR$, $B_{r} \equiv B_{r}(x_0) \Subset \Omega$ and $\phi \in C_c^{\infty}(B_{r})$
with $0\le \phi\le 1$, we have
\begin{equation}\label{caccio}
\begin{aligned}
&\int_{B_{r}}\int_{B_{r}}\left(\frac{|v(x)-v(y)|}{|x-y|^s}\right)^p\min{\{ \phi(x)^{p},\phi(y)^{p} \}}\, \frac{d\mu(x)\,d\mu(y)}{w(B_{x,y})}\\
&\quad\le c\int_{B_{r}}\int_{B_{r}}\left(\frac{|\phi(x)-\phi(y)|}{|x-y|^s} \max \{v(x),v(y)\} \right)^p\,\frac{d\mu (x)\, d\mu(y)}{w(B_{x,y})}\\
&\qquad +c\bigg(\int_{B_{r}}v\phi^p\,d\mu\bigg)\bigg( \sup_{y\, \in\, \mathrm{supp}\,\phi}\int_{\RRn \setminus B_{r}}\frac{v(x)^{p-1}}{|x-y|^{sp}}\,\frac{d\mu(x)}{w(B_{x,y})}\bigg),
\end{aligned}
\end{equation}
where $v\coloneqq(u-k)_{+}$ and $c>0$ depends on $\lambda$, $\Lambda$ and $p$.

Moreover, if $u\in \Wspw$ is a weak supersolution  to \eqref{PDE}, then the same estimate holds for $v=(u-k)_-$.
\end{proposition}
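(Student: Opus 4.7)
The proof follows the now-standard nonlocal Caccioppoli argument of Di Castro--Kuusi--Palatucci \cite{DiCastroKuusiPalatucci14}, with the constants tracked through the weighted kernel. The plan is to test the subsolution inequality \eqref{weakform1} with $\eta \coloneqq v\phi^p = (u-k)_+\phi^p$, then split $\RRn\times\RRn$ into $B_r\times B_r$ and the off-diagonal pieces $B_r\times B_r^c$ and $B_r^c\times B_r$; the region $B_r^c\times B_r^c$ contributes nothing because $\phi$ vanishes there. Since $\phi\in C_c^\infty(B_r)$ with $B_r\Subset\Omega$, the function $\eta$ is non-negative and compactly supported in $\Omega$; after a truncation step (replace $v$ by $\min\{v,M\}$ and let $M\to\infty$) it is admissible in \eqref{weakform1}. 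By the symmetry of $k$, the subsolution inequality then rewrites as $\mathrm{I}_{\mathrm{loc}}+2\,\mathrm{I}_{\mathrm{tail}}\leq 0$.

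For the local part $\mathrm{I}_{\mathrm{loc}}$, I would apply the pointwise algebraic inequality, standard in this theory and valid for all $a,b\in\RR$ and $\tau_1,\tau_2\in[0,1]$,
\begin{align*}
 |a-b|^{p-2}(a-b)\bigl(a_+\tau_1^p-b_+\tau_2^p\bigr) &\geq c_1 |a_+-b_+|^p\min\{\tau_1,\tau_2\}^p \\
 &\quad - c_2 \bigl(\max\{a_+,b_+\}\bigr)^p|\tau_1-\tau_2|^p,
\end{align*}
applied with $a=u(x)-k$, $b=u(y)-k$, $\tau_1=\phi(x)$, $\tau_2=\phi(y)$. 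Combined with the two-sided bound \eqref{eq:cond-k} on the kernel, this delivers precisely the LHS of \eqref{caccio} minus the first term on its RHS.

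For the tail part, $\eta(y)=0$ when $y\in B_r^c$, so
\begin{align*}
 \mathrm{I}_{\mathrm{tail}} = \int_{B_r}\int_{B_r^c} k(x,y)\,\frac{|u(x)-u(y)|^{p-2}(u(x)-u(y))\,v(x)\phi(x)^p}{|x-y|^{sp}}\,dy\,dx.
\end{align*}
A short case analysis on whether $u(x),u(y)$ lie below or above $k$ shows that whenever $v(x)>0$ one has
\begin{align*}
 |u(x)-u(y)|^{p-2}(u(x)-u(y))\,v(x) \geq -v(x)\,v(y)^{p-1},
\end{align*}
the only nontrivial case being $u(y)>u(x)>k$, in which $u(y)-u(x)\leq u(y)-k=v(y)$. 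Combining with \eqref{eq:cond-k} and swapping the names of the dummy variables in the outer integration shows that $-2\mathrm{I}_{\mathrm{tail}}$ is controlled by the second term on the RHS of \eqref{caccio}. Rearranging the two estimates yields \eqref{caccio}. The supersolution case follows by applying the result just proved to $\tilde u\coloneqq -u$ at level $\tilde k\coloneqq -k$, noting that $(\tilde u-\tilde k)_+=(u-k)_-$.

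The main obstacle I expect is the clean verification of the off-diagonal pointwise bound through the case analysis on the signs of $u(x)-k$ and $u(y)-k$, together with the approximation argument needed to make $\eta=v\phi^p$ a legitimate test function under only $u\in\Wspw$; both are technical but routine in the nonlocal De Giorgi theory.
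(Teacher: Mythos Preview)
Your proposal is correct and matches the paper's approach: the paper simply defers to \cite[Theorem 1.4]{DiCastroKuusiPalatucci16}, noting that the test function $\phi^p(u-k)_+$ is used and that its admissibility in $\Wspw$ follows from the \Poincare{} inequality (Theorem~\ref{thm:poincare}). The only minor difference is that you suggest a truncation argument for admissibility, whereas the paper invokes Theorem~\ref{thm:poincare} directly; either route works in this weighted setting, but citing the \Poincare{} inequality is cleaner here since it immediately gives $v\phi^p\in\Wspwz$ without an additional limit passage.
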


Proposition \ref{lem.caccio} can be shown almost exactly as in the nondegenerate case, i.e., $w\equiv 1$, where the proof can be found in \cite[Theorem 1.4]{DiCastroKuusiPalatucci16}. The proof utilizes the test function $\phi^p (u-k)_+$, which is in $\Wspw$ due to \Poincare 's inequality Theorem \ref{thm:poincare}.

Later on we need the following iteration lemma, see e.g. \cite[Lemma 7.1]{Giusti-book}.
\begin{lemma}\label{lem:iteration}
Let $(a_j)_{j\geq0}$ be a sequence of nonnegative real numbers satisfying $a_{j+1} \le b_1b_2^{j}a_j^{1+\beta}$ for all $j\ge0$, for some $b_1,\beta>0$ and $b_2>1$. If $a_0\le b_1^{-1/\beta}b_2^{-1/\beta^2}$, then $a_j\to 0$ as $j\to \infty$.
\end{lemma}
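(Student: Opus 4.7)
The plan is to prove this classical iteration lemma by induction, showing that the sequence $(a_j)$ decays geometrically. Specifically, I intend to establish the stronger claim that
\begin{equation*}
  a_j \le c\, b_2^{-j/\beta} \qquad \text{for all } j \ge 0,
\end{equation*}
where $c \coloneqq b_1^{-1/\beta} b_2^{-1/\beta^2}$ is precisely the threshold appearing in the hypothesis. Since $b_2 > 1$, this bound forces $a_j \to 0$ as $j \to \infty$.

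The base case $j=0$ is exactly the assumption $a_0 \le c$. For the inductive step, assuming $a_j \le c\, b_2^{-j/\beta}$, I would plug this into the recursion to get
\begin{equation*}
  a_{j+1} \le b_1 b_2^j a_j^{1+\beta} \le b_1 b_2^j\, c^{1+\beta}\, b_2^{-j(1+\beta)/\beta}
  = b_1 c^{1+\beta}\, b_2^{-j/\beta}.
\end{equation*}
To close the induction I need this to be bounded by $c\, b_2^{-(j+1)/\beta}$, which after cancellation is equivalent to $b_1 c^{\beta} \le b_2^{-1/\beta}$, i.e., $c \le b_1^{-1/\beta} b_2^{-1/\beta^2}$. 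This is exactly how $c$ was chosen, so the induction propagates.

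There is essentially no obstacle here: the whole argument is a direct algebraic manipulation, and the threshold $b_1^{-1/\beta} b_2^{-1/\beta^2}$ is reverse-engineered precisely to make the induction self-sustaining. The only mild subtlety is to verify the exponent arithmetic $j - j(1+\beta)/\beta = -j/\beta$ correctly, and to note that the chosen geometric decay rate $b_2^{-1/\beta}$ is the unique one (up to multiplicative constants) that is preserved under the map $a \mapsto b_1 b_2^j a^{1+\beta}$ in a scale-invariant way. No further auxiliary results from the preceding sections are needed.
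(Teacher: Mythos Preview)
Your argument is correct and is exactly the standard induction proof of this iteration lemma. The paper does not supply its own proof but simply cites \cite[Lemma~7.1]{Giusti-book}, where the same geometric-decay induction is carried out; your write-up matches that classical approach.
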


We are now ready to prove the local boundedness result Theorem \ref{thm:bounded}.

\begin{proof}[Proof of Theorem~\ref{thm:bounded}]
Suppose that $u$ is a weak subsolution. Fix $B_{r}=B_r(x_0) \Subset \Omega$ and let $k>0$ be a constant to be determined in the last part of the proof. For any $j \in \setN$, write
\begin{align*}
&r_{j}=(1+2^{-j})\frac{r}{2},\ \ \tilde{r}_{j}=\frac{r_{j}+r_{j+1}}{2},\ \ B_{j}=B_{r_{j}}(x_0), \ \ \tilde{B}_{j}=B_{\tilde{r}_{j}}(x_0),\\
&k_{j}=(1-2^{-j})k,\ \ \tilde{k}_{j}=\frac{k_{j}+k_{j+1}}{2},\ \ v_{j}=(u-k_{j})_{+} \ \ \mathrm{and}\ \ \tilde{v}_{j}=(u-\tilde{k}_{j})_{+}.
\end{align*}
Note from the above setting that
\begin{align}\label{lb.rel}
B_{j+1} \subset \tilde{B}_{j} \subset B_{j},\quad k_{j} \le \tilde{k}_{j} \le k_{j+1} \quad \mathrm{and} \quad v_{j+1} \le \tilde{v}_{j} \le v_{j}.
\end{align}
We take any cut-off functions $\phi_{j} \in C_c^{\infty}(\tilde{B}_{j})$ such that $0 \le \phi_{j} \le 1$, $\phi_{j} \equiv 1$ in $B_{j+1}$
and $|\nabla \phi_{j}| \lesssim 2^j/r$. Putting $\phi_{j}$ into the Caccioppoli inequality \eqref{caccio} with $v=\tilde{v}_j$ and dividing the inequality by $w(B_{j+1})$, we get
\begin{equation}\label{lb.split}
\begin{aligned}
\fint_{B_{j+1}}\int_{B_{j+1}}&\left(\frac{|\tilde{v}_{j}(x)-\tilde{v}_{j}(y)|}{|x-y|^s}\right)^p\frac{d\mu(x)\,d\mu(y)}{w(B_{x,y})}\\
&\le c\fint_{B_{j}}\int_{B_{j}}\left(\frac{|\phi_{j}(x)-\phi_{j}(y)|}{|x-y|^s} \max \{ \tilde{v}_{j}(x), \tilde{v}_{j}(y)\} \right)^p\frac{d\mu(x)\,d\mu(y)}{w(B_{x,y})}\\
&\quad +c\bigg(\fint_{B_{j}}\tilde{v}_{j}\phi_{j}^{p}\,d\mu\bigg) \bigg( \sup_{y \,\in\, \mathrm{supp}\,\phi_{j}}\int_{\RRn \setminus B_{j}}\frac{\tilde{v}_{j}(x)^{p-1}}{|x-y|^{sp}}\,\frac{d\mu(x)}{w(B_{x,y})} \bigg)\\
&\eqqcolon c I_1+c(I_2)(I_3).
\end{aligned}
\end{equation}
We first look at $I_1$.
Since $|\phi_{j}(x)-\phi_{j}(y)|\le \|\nabla\phi_{j}\|_{L^\infty} |x-y|  \le c2^{j}|x-y|/r$, and with Lemma~\ref{lem:intweight}, we find
\begin{equation}
\label{lb.rom1}
\begin{split}
I_1
&\le c\,2^{pj}\fint_{B_{j}}\int_{B_{j}} \max \{ \tilde{v}_{j}(x), \tilde{v}_{j}(y)\}^p \frac{ |x-y|^{(1-s)p}}{r^p} \,\frac{d\mu(x)\,d\mu(y)}{w(B_{x,y})}\\
&\lesssim 2^{pj}\fint_{B_{j}}\tilde{v}_{j}(x) ^p\bigg[ \int_{B_{2r}(x)}\frac{|x-y|^{(1-s)p}}{r^p}\,\frac{d\mu(y)}{w(B_{x,y})} \bigg]\, d\mu(x)
\\
&\lesssim \frac{2^{pj}}{1-s}\fint_{B_{j}} \Big(\frac{\tilde{v}_{j}}{r^s}\Big)^p\,d\mu.
\end{split}
\end{equation}
For $I_2$, since $v_{j} \ge \tilde{k}_{j}-k_{j}$ in $\{ u \ge \tilde{k}_{j} \}$, we have that
$ v_{j}^p \ge \tilde{v}_{j}(\tilde{k}_{j}-k_j)^{p-1}$ and hence
\begin{equation}\label{loc.rom2.c1}
I_2 \le \frac{1}{(\tilde{k}_j-k_j)^{p-1}}\fint_{B_{j}} v_{j}^{p}\, d\mu = \frac{2^{(p-1)(j+2)}}{ k^{p-1}}\fint_{B_{j}} v_{j}^{p}\, d\mu.
\end{equation}
In order to estimate $I_3$, we notice that for $x \in \RRn \setminus B_{j}$ and $y \in \tilde{B}_{j}$,
$B_{x,y} \cup B_{x,x_0}  \subset B_{3|x-x_0|}(x_0)$,
$\frac{|x-x_0|}{|x-y|} \le \frac{|x-y|+|y-x_0|}{|x-y|} \le 1+\frac{\tilde{r}_{j}}{r_{j}-\tilde{r}_{j}} \le 2^{j+4}$.
Hence by \eqref{Apinequality},
$$
\frac{w(B_{3|x-x_0|}(x_0))}{w(B_{x,y})} \le c \left( \frac{|x-x_0|}{|x-y|}\right)^{np}\le c\, 2^{npj}
\quad\text{and}\quad
 \frac{w(B_{x,x_0})}{w(B_{3|x-x_0|}(x_0))} \le  c,
$$
which implies that
$$
\frac{1}{w(B_{x,y})} \le  c ~2^{npj} \frac{1}{w(B_{x,x_0})}.
$$
This and \eqref{lb.rel} imply
\begin{align}\label{loc.rom2.c2}
\begin{aligned}
  I_3 &\leq c\,2^{(n+s)pj}\int_{\RRn \setminus B_{r/2}}\frac{v_0(x)^{p-1}}{|x-x_0|^{sp}}\frac{d\mu(x)}{w(B_{x,x_0})}
  \\
  &\leq\frac{c\,2^{(n+1)pj}}{(1-s)r^{sp}}\mathrm{Tail}(u_{+};x_0,r/2)^{p-1}.
\end{aligned}
\end{align}
Let now $\kappa \coloneqq \frac{n}{n-s}\in (1,2)$. Combining \eqref{lb.split}, \eqref{lb.rom1}, \eqref{loc.rom2.c1} and \eqref{loc.rom2.c2}, using the Sobolev-Poincar\'{e} inequality in Theorem~\ref{thm:sobolev_poincare},  and Hölder's inequality, we have
\begin{align}\label{lb.in1}
\begin{aligned}
&\bigg( \fint_{B_{j+1}}\tilde{v}_{j}^{p\kappa}\,d \mu\bigg)^{\frac{1}{\kappa}}
 \lesssim \bigg( \fint_{B_{j+1}}|\tilde{v}_{j}-\mean{\tilde{v}_j}_{w,B_{j+1}}|^{p\kappa}d\mu \bigg)^{\frac{1}{\kappa}}+\bigg( \fint_{B_{j+1}}\tilde{v}_{j}\,d \mu\bigg)^{p} \\
&\qquad \lesssim (1-s)r^{sp}\!\! \fint_{B_{j+1}}\int_{B_{j+1}}\frac{|\tilde{v}_{j}(x)-\tilde{v}_{j}(y)|^p}{|x-y|^{sp}}\frac{d\mu(x)\,d\mu(y)}{w(B_{x,y})}+ \fint_{B_{j+1}}\tilde{v}_{j}^p\,d \mu\\
&\qquad\lesssim 2^{2npj} \left(1 + \frac{\mathrm{Tail}(u_{+};x_0,r/2)^{p-1}}{k^{p-1}}
  \right) \fint_{B_{j}} v_{j}^p\,d\mu.
\end{aligned}
\end{align}
Note that
\begin{align*}
  v_{j+1}(x)>0\implies \tilde{v}_j(x)>k_{j+1}-\tilde{k}_j=2^{-j-2}k.
\end{align*}
We thus have
\begin{align}\label{lb.in3}
  \begin{aligned}
    \bigg( \fint_{B_{j+1}}v_{j+1}^p\, d\mu \bigg)^{\frac{1}{\kappa}}&\leq \bigg( \fint_{B_{j+1}}\tilde{v}_{j}^p \left(\tilde{v}_{j}\frac{2^{j+2}}{k}\right)^{p(\kappa -1)}\, d\mu \bigg)^{\frac{1}{\kappa}}
    \\
    &=k^{-p\frac{\kappa-1}{\kappa}}2^{(j+2)p\frac{\kappa-1}{\kappa}}\bigg( \fint_{B_{j+1}}\tilde{v}_{j}^{p\kappa }\, d\mu \bigg)^{\frac{1}{\kappa}}.
  \end{aligned}
\end{align}
We now define the quantity we want to iterate by
\begin{align*}
  a_j\coloneqq k^{-p}\fint_{B_j} v_j^p\,d\mu.
\end{align*}
With this, our findings so far can be summarized by combining \eqref{lb.in1} and \eqref{lb.in3} to get
\begin{align*}
  a_{j+1}\leq c\,2^{4npj} \left(1+\frac{\mathrm{Tail}(u_+;x_0,r/2)^{p-1}}{k^{p-1}}\right)^\kappa a_j^\kappa .
\end{align*}
If for some $\delta >0$ we have $k \ge  \delta  \,\mathrm{Tail}(u_{+};x_0,r/2))$, then this turns into
\begin{align}\label{eq:iterate}
  a_{j+1}\leq c_1\delta^{-\kappa (p-1)}2^{4npj}  a_j^\kappa ,
\end{align}
for some $c_1>0$ depending on $n,p,\lambda,\Lambda$ and $[w]_{A_p}$. Therefore, by Lemma~\ref{lem:iteration}, if
$$
a_0  = \frac{1}{k^p}\fint_{B_{r}}u_+^p\,d\mu
\le c_{1}^{-\frac{1}{\kappa-1}} \delta^{(p-1)\frac{\kappa}{\kappa-1}}2^{\frac{-4np}{(\kappa -1)^2}},
$$
i.e.,
$$
k^p \ge c_{1}^{\frac{1}{\kappa-1}} \delta^{-(p-1)\frac{\kappa}{\kappa-1}}2^{\frac{4np}{(\kappa -1)^2}} \fint_{B_{r}}u_+^p\,d\mu,
$$
then  we obtain $\lim_{j\to \infty}a_j=0$, which implies that
$$
u \le  k \quad \text{a.e. in } \ B_{r/2}.
$$
Finally, choosing
$$
k \coloneqq  c_{1}^{\frac{1}{p(\kappa-1)}} \delta^{-\frac{p-1}{p}\frac{\kappa}{\kappa-1}}2^{\frac{4n}{(\kappa -1)^2}} \bigg(\fint_{B_{r}}u_+^p\,d\mu \bigg)^{\frac{1}{p}} + \delta \, \mathrm{Tail}(u_{+};x_0,r/2)),
$$
we have the estimate \eqref{lb}, which also implies \eqref{lb1}.
\end{proof}

\subsection{Hölder continuity}

We start with a logarithmic estimate. This will be used in the density improvement result in Lemma~\ref{lem:density}.

\begin{proposition}[Logarithmic estimate] \label{lem.log}
Let $u\in W^{s,p}_w (\RRn)$ be a weak supersolution  to \eqref{PDE} with
$u \ge 0$ in $B_R\equiv B_{R}(x_0) \subset \Omega$. Then for any $d>0$ and $0<r\le \frac{R}{2}$, we have
\begin{equation}\label{lo.in}\begin{split}
r^{sp}\fint_{B_{r}}\int_{B_{r}}\frac{|\log{(u(x)+d)}-\log{(u(y)+d)}|^p}{|x-y|^{sp}}&\, \frac{d\mu(x)\, d\mu(y)}{w(B_{x,y})} \\
&\hspace{-3cm}\le c\left[\frac{1}{s(1-s)} + d^{1-p}\left(\frac{r}{R}\right)^{sp}\mathrm{Tail}(u_{-};x_0,R)^{p-1}\right]
\end{split}\end{equation}
for some $c=c(n,p,\lambda,\Lambda,[w]_{A_p})>0$.
\end{proposition}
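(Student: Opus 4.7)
The plan is to imitate the classical logarithmic Caccioppoli estimate for the fractional $p$-Laplacian from \cite{DiCastroKuusiPalatucci14,DiCastroKuusiPalatucci16}, adapting it to our weighted kernel~$k(x,y)$ satisfying \eqref{eq:cond-k}. Write $\bar{u}(x) \coloneqq u(x)+d$. Choose a cutoff $\phi \in C_c^\infty(B_{3r/2}(x_0))$ with $\phi \equiv 1$ on $B_r$, $0\leq\phi\leq 1$, and $|\nabla \phi| \leq c/r$. Since $r \leq R/2$, the support of $\phi$ is contained in $B_{3R/4}\subset B_R$, where $u \geq 0$; hence $\bar{u} \geq d > 0$ on $\support\phi$, so $\eta \coloneqq \phi^p\bar{u}^{1-p}$ is bounded, non-negative, and an admissible test function in the weak supersolution inequality \eqref{weakform1}.

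The key analytic tool is the pointwise algebraic inequality (cf.\ \cite[Lemma~3.2]{DiCastroKuusiPalatucci16}): for $a,b>0$ and $\tau_1,\tau_2\geq 0$ one has
\begin{align*}
    |a-b|^{p-2}(a-b)(\tau_1^p a^{1-p}-\tau_2^p b^{1-p})
    \le c_2 |\tau_1-\tau_2|^p - c_1 \max(\tau_1,\tau_2)^p \bigabs{\log a-\log b}^p.
\end{align*}
Apply this pointwise with $a=\bar u(x)$, $b=\bar u(y)$, $\tau_i=\phi(\cdot)$ on the local region $B_R\times B_R$, where $\bar u>0$. Splitting the weak supersolution inequality using that $\eta$ is supported in $B_{3r/2}$ produces
\begin{align*}
    c_1 \iint_{B_r\times B_r} \frac{|\log\bar u(x)-\log\bar u(y)|^p}{|x-y|^{sp}} k(x,y)\,dx\,dy
    \le c_2 \underbrace{\iint_{B_R\times B_R} \frac{|\phi(x)-\phi(y)|^p}{|x-y|^{sp}} k(x,y)\,dx\,dy}_{=:\mathrm{I}} + \mathrm{II},
\end{align*}
where $\mathrm{II}$ is the long-range contribution coming from $B_{3r/2}\times(\RRn\setminus B_R)$.

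For $\mathrm{I}$, use $|\phi(x)-\phi(y)|^p \lesssim \min(|x-y|^p/r^p,1)$, split into $|x-y|\lesssim r$ and $|x-y|\gtrsim r$, and apply Lemma~\ref{lem:intweight} twice (with $\alpha = (1-s)p>0$ and $\alpha=-sp<0$) to bound $\mathrm{I}\lesssim \frac{r^{-sp}}{s(1-s)}w(B_{3r/2})$. For $\mathrm{II}$, for $x\in B_{3r/2}$ and $y\in\RRn\setminus B_R$ we have $u(x)\geq 0$, hence $|u(x)-u(y)|^{p-2}(u(x)-u(y))\leq (u(x)+u_-(y))^{p-1}\lesssim \bar{u}(x)^{p-1}+u_-(y)^{p-1}$. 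Since $\bar{u}(x)\geq d$ on $\support \phi$, this contributes $\phi^p(x)+d^{1-p}\phi^p(x)u_-(y)^{p-1}$. Using $|x-y|\eqsim |x_0-y|$ and $w(B_{x,y})\eqsim w(B_{x_0,y})$ (by doubling) for such $x,y$, together with Lemma~\ref{lem:intweight} and the very definition \eqref{tail} of the tail, one obtains
\begin{align*}
    \mathrm{II}\lesssim \frac{w(B_{3r/2})}{sR^{sp}} + \frac{d^{1-p}w(B_{3r/2})\mathrm{Tail}(u_-;x_0,R)^{p-1}}{(1-s)R^{sp}}.
\end{align*}
Dividing both sides of the main inequality by $w(B_r)$, multiplying by $r^{sp}$, and invoking doubling ($w(B_{3r/2})\lesssim w(B_r)$) together with $r\le R/2$ gives \eqref{lo.in}.

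The main obstacle is a careful bookkeeping of the sign and the algebraic inequality in the first two steps; once this is set up, all remaining estimates are essentially those already developed in Section~\ref{sec:fract-weight-sobol} (Lemma~\ref{lem:intweight} and the doubling property \eqref{eq:doubling}). A secondary subtle point is to verify admissibility of $\eta = \phi^p\bar u^{1-p}$: this follows from $d\leq \bar u \leq \|u\|_{L^\infty(B_{3r/2})}+d$ on the support of $\phi$ (which is a priori only known after the local boundedness Theorem~\ref{thm:bounded}, or alternatively by a standard truncation argument $\eta_M=\phi^p\min(\bar u,M)^{1-p}$ followed by $M\to\infty$ and monotone convergence).
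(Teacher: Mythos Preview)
Your proof is correct and follows essentially the same route as the paper: the same test function $\eta=\phi^p\bar u^{1-p}$, the same pointwise algebraic inequality (which the paper reproves by case distinction rather than citing), and the same use of Lemma~\ref{lem:intweight} and doubling to close. The only structural difference is that you split local/nonlocal at $B_R$ while the paper splits at $B_{2r}$ and handles the annulus $B_R\setminus B_{2r}$ as part of the nonlocal piece; your bookkeeping is slightly cleaner.

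Two minor corrections. First, in the algebraic inequality the factor should be $\min(\tau_1,\tau_2)^p$, not $\max(\tau_1,\tau_2)^p$ (take $\tau_1=1$, $\tau_2=0$, $a/b\to\infty$ to see the $\max$ version fails); this does not affect your argument since you only exploit the log term on $B_r\times B_r$ where $\phi\equiv 1$. Second, the admissibility of $\eta$ is simpler than you suggest: the lower bound $\bar u\ge d$ on $B_R$ alone gives $\bar u^{1-p}\le d^{1-p}$ and, via the mean value theorem, $|\bar u(x)^{1-p}-\bar u(y)^{1-p}|\le (p-1)d^{-p}|u(x)-u(y)|$, so $\eta\in W^{s,p}_{w,0}(\Omega)$ follows directly from $u\in W^{s,p}_w$ and $\phi\in C_c^\infty$ without invoking local boundedness or truncation.
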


\begin{proof}
Write $v(x)\coloneqq u(x)+d$ and fix a cut-off function $\ds \phi \in C^{\infty}_{c}(B_{3r/2})$ such that $0 \le \phi \le 1$, $|\nabla\phi| \le 4/r$ and $\phi \equiv 1$ in $B_{r}$.
Since $\eta\coloneqq v^{-(p-1)} \phi^p $ is nonnegative in $\Omega$ and  belongs to $W^{s,p}_{w,0} (\Omega)$,
we can take it as a test function to find
\begin{align}\label{lo.sp}
\begin{split}
0&\le \int_{B_{2r}}\int_{B_{2r}}k(x,y)\frac{|v(x)-v(y)|^{p-2}(v(x)-v(y))(\eta(x)-\eta(y))}{|x-y|^{sp}} \, dx \, dy
\\
&\quad + 2\int_{\RRn \setminus B_{2r}}\int_{B_{2r}}k(x,y)\frac{|u(x)-u(y)|^{p-2}(u(x)-u(y))\eta(x)}{|x-y|^{sp}}\,dx\,dy
\\
&\eqqcolon I_1+2I_2.
\end{split}
\end{align}
We first consider  $I_1$. Set for $x,y\in B_{2r}$,
\begin{align*}
F(x,y)\coloneqq |v(x)-v(y)|^{p-2} (v(x)-v(y))(v(x)^{-(p-1)} \phi(x)^p -v(y)^{-(p-1)} \phi(y)^p ).
\end{align*}
We start by establishing a pointwise estimate for $F(x,y)$. For this we distinguish the following two cases: $v(y)\le v(x)\le 2 v(y)$ (Case 1) and
$2 v(y) <  v(x)$ (Case 2). By symmetry, we have $F(x,y)=F(y,x)$ and it is thus sufficient to consider these two cases.

\noindent {\bf Case 1 $v(y)\le v(x)\le 2 v(y)$:} We first assume $\phi(x) \ge \phi(y)$. Using the mean value theorem and Young's inequality, we get
\begin{align}\label{lo.rom1.sp}
\begin{split}
F(x,y)&=   (v(x)-v(y))^{p-1} \left(v(x)^{-(p-1)}- v(y)^{-(p-1)}\right)\phi(x)^p\\
&\qquad + (v(x)-v(y))^{p-1} v(y)^{-(p-1)} (\phi(x)^{p}-\phi(y)^{p})\\
&\le  -(p-1) (v(x)-v(y))^{p}  (2v(y))^{-p}\phi(x)^p\\
&\qquad +  p(v(x)-v(y))^{p-1} v(y)^{-(p-1)} \phi(x)^{p-1}(\phi(x)-\phi(y))\\
&\le  -\frac{p-1}{2^{p+1}} \left(\frac{v(x)-v(y)}{v(y)}\right)^{p}\phi(x)^p + c_p (\phi(x)-\phi(y))^p,
\end{split}
\end{align}
for some $c_p>0$ depending on $p$.
We next assume $\phi(x) < \phi(y)$.
Again using the mean value theorem,
\[\begin{split}
F (x,y)&\le  (v(x)-v(y))^{p-1} (v(x)^{-(p-1)} -v(y)^{-(p-1)}) \phi(y)^p\\
&\le -(p-1)\left(\frac{v(x)-v(y)}{v(y)}\right)^p \phi(x)^p.
\end{split}\]
Therefore, since $\log t \le t -1$ and $|\nabla\phi|\le 4/r$, we  have that for every $x,y\in B_{2r}$ with $v(y)\le v(x)\le 2 v(y)$,
\begin{equation}\label{estimateF}
F(x,y) \le - \tilde c [(\log v(x) -\log v(y))\min \lbrace\phi(x),\phi(y)\rbrace ]^p
+c \left( \frac{|x-y|}{r} \right)^{p}
\end{equation}
for some small $\tilde c>0$ and large $c>0$ depending on $p$.

\noindent
\textbf{Case 2 $2 v(y) <  v(x)$:}
We recall the following elementary inequality (see \cite[Lemma 3.1]{DiCastroKuusiPalatucci16}):
For any $p \ge 1$, $\epsilon\in (0,1]$ and $a, b \in \RR^N$,
  \begin{align*}
    |a|^{p} \le (1+c_p\ep)|b|^{p}+(1+c_p\ep)\ep^{1-p}|a-b|^{p},
  \end{align*}
where $c_p > 0$ depends only on $N$ and $p$. Using this inequality together with $2v(y)<v(x)$,
\begin{align*}
F(x,y)&= (v(x)-v(y))^{p-1} \left(v(x)^{-(p-1)}- v(y)^{-(p-1)}\right) \phi(y)^p\\
&\qquad +(v(x)-v(y))^{p-1} v(x)^{-(p-1)} (\phi(x)^{p}-\phi(y)^{p})\\
&\le -\left( 1 -  2^{-(p-1)} \right) (v(x)-v(y))^{p-1}  v(y)^{-(p-1)}  \phi(y)^p\\
&\qquad + c (v(x)-v(y))^{p-1} v(x)^{-(p-1)}( \ep  \phi(y)^{p}  + \ep^{1-p}  |\phi(x)-\phi(y)|^{p})\\
&\le \left(- 1 +  2^{-(p-1)}+c \epsilon \right) \left(\frac{v(x)-v(y)}{v(y)}\right)^{p-1}   \phi(y)^p+  c \ep^{1-p}  |\phi(x)-\phi(y)|^{p}.
\end{align*}
Choosing $\ep>0$ sufficiently small, we have
\begin{align*}
F(x,y) \leq -\tilde c \left(\frac{v(x)-v(y)}{v(y)}\right)^{p-1}\phi(y)^p +c |\phi(x)-\phi(y)|^p.
\end{align*}
Therefore, since
$$
\log v(x) -\log v(y) \le \log \left(\frac{2(v(x)-v(y))}{v(y)}\right) \le \frac{p}{p-1} \left(\frac{2(v(x)-v(y))}{v(y)} \right)^{(p-1)/p},
$$
where we have used the facts that $2v(y) < v(x) $ and $\log t <  \frac{p}{p-1}t^{(p-1)/p}$ for  $t>0$,  and since $|\nabla\phi|\le 4/r$, we also obtain \eqref{estimateF} for $x,y\in B_{2r}$ with $2v(y) < v(x)$. Thus the pointwise estimate \eqref{estimateF} holds for all $x,y\in B_{2r}$.

We are now ready to estimate $I_1$ in \eqref{lo.sp}. Using \eqref{estimateF} and the facts that $\phi\equiv 1$ in $B_r$ and $0\le \phi \le 1$ in $B_{2r}$, and applying \eqref{eq:intweight1} and \eqref{eq:doubling}, we have
\begin{align}\label{estimateI}\begin{split}
I_1&\le -\tilde c \int_{B_{r}}\int_{B_{r}}\frac{|\log{v(x)}-\log{v(y)}|^p}{|x-y|^{sp}}\, \frac{d\mu(x)\,d\mu(y)}{w(B_{x,y})}  \\
&\qquad   + c  r^{-p}\int_{B_{2r}}\int_{B_{4r}(x)} |x-y|^{(1-s)p} \frac{d\mu (y)\, d\mu(x)}{w(B_{x,y})}\\
&\le - \tilde c  \int_{B_{r}}\int_{B_{r}}\frac{|\log{v(x)}-\log{v(y)}|^p}{|x-y|^{sp}}\,\frac{d\mu(x)\,d\mu(y)}{w(B_{x,y})} +c \frac{w(B_{r})}{(1-s)r^{sp}}\\
&\le - \tilde c  \int_{B_{r}}\int_{B_{r}}\frac{|\log{v(x)}-\log{v(y)}|^p}{|x-y|^{sp}}\,\frac{d\mu(x)\,d\mu(y)}{w(B_{x,y})} + c \frac{w(B_{r})}{(1-s)r^{sp}} ,
\end{split}
\end{align}
where $\tilde c>0$ depends on $p$ and $\lambda$ and $c>0$ depends on $n$, $p$, $\Lambda$ and $[w]_{A_p}$.

We next estimate $I_2$. Observe that for $x \in B_{R}$ and $y \in \RRn$,
\begin{align*}
|u(x)-u(y)|^{p-2}(u(x)-u(y))\le \left((u(x)-u(y))_{+}\right)^{p-1}\le c (u(x)^{p-1}+u_{-}(y)^{p-1}).
\end{align*}
Using this and the fact that $\mathrm{supp}\, \phi \subset B_{3r/2}$, we have
\begin{align*}
\begin{split}
I_2 &\le c\int_{B_R \setminus B_{2r}}\int_{B_{3r/2}}\frac{(u(x)-u(y))_{+}^{p-1}v(x)^{-(p-1)}}{|x-y|^{sp}}\frac{d\mu(x)\,d\mu(y)}{w(B_{x,y})}\\
&\qquad + c\int_{\RRn \setminus B_{R}}\int_{B_{3r/2}}\frac{(u(x)^{p-1}+ u_-(y)^{p-1})v(x)^{1-p}}{|x-y|^{sp}}\frac{d\mu(x)\,d\mu(y)}{w(B_{x,y})}\\
&\le c\int_{\RRn \setminus B_{2r}}\int_{B_{3r/2}}\frac{1}{|x-y|^{sp}}\frac{d\mu(x)\,d\mu(y)}{w(B_{x,y})}
 + c d^{1-p}\int_{\RRn \setminus B_{R}}\int_{B_{3r/2}}\frac{u_-(y)^{p-1}}{|x-y|^{sp}}\frac{d\mu(x)\,d\mu(y)}{w(B_{x,y})}\\
&\eqqcolon  c I_{2,1}+c d^{1-p} I_{2,2},
\end{split}
\end{align*}
where in the last estimate we used the facts that  $u \ge 0$ and $v=u+d\ge d$ in $B_{R}$, which implies
$(u(x)-u(y))_{+} \le u(x) \le v(x)$ for $x,y\in B_R$.
By \eqref{eq:intweight2} and \eqref{eq:doubling} we have
\begin{align*}
I_{2,1}  \le c \int_{B_{3r/2}}\bigg(\int_{\RRn \setminus B_{r/2}(x)} \frac{1}{|x-y|^{sp}}\,\frac{d\mu(y)}{w(B_{x,y})}\bigg)\, d\mu(x) \le \frac{c}{sr^{sp}}  w(B_{3r/2}) \le \frac{c}{sr^{sp}}  w(B_r).
\end{align*}
Next we observe that for any $x \in B_{3r/2}=B_{3r/2}(x_0)$ and $y \in \RRn \setminus B_{2r}(x_0)$,
$$
B_{x,y} \cup B_{y,x_0}  \subset B_{3|y-x_0|}(x_0)
$$
and
$$
\frac{|y-x_{0}|}{|x-y|} \le 1+\frac{|x-x_{0}|}{|x-y|} \le 1+\frac{3r/2}{2r-(3r/2)} = 4.
$$
Hence by \eqref{Apinequality}
$$
\frac{w(B_{3|y-x_0|}(x_0))}{w(B_{x,y})} \le c \left( \frac{|y-x_0|}{|x-y|}\right)^{np}\lesssim 1
\quad\text{and}\quad
 \frac{w(B_{y,x_0})}{w(B_{3|y-x_0|}(x_0))} \lesssim  1,
$$
which implies that
$$
\frac{1}{w(B_{x,y})} \le   \frac{c}{w(B_{y,x_0})}.
$$
This enables us to estimate
$$
I_{2,2} \le c w(B_r) R^{-sp}\mathrm{Tail}(u_{-};x_0,R)^{p-1}.
$$
Consequently, we have
\begin{align*}
I_2 \le cw(B_r)r^{-sp} \left[\frac{1}{s} + d^{1-p}\left(\frac{r}{R}\right)^{sp}\mathrm{Tail}(u_{-};x_0,R)^{p-1}\right].
\end{align*}
Inserting this estimate and \eqref{estimateI} into \eqref{lo.sp}, we get \eqref{lo.in}.
\end{proof}

We next show a density improvement result.

\begin{lemma}\label{lem:density}
Let $0<s_0\le s<1$, $\nu >0$, $u\in W^{s,p}_w(\RRn)$ be a weak supersolution to \eqref{PDE} which is nonnegative in $B_R=B_R(x_0)\Subset \Omega$. Let $r\le R\le 4r$. Then for every $\sigma\in(0,1]$, there exists a constant $\epsilon\in (0,\tfrac 12)$ depending only on $n,p,\lambda,\Lambda,s_0,[w]_{A_p}$ and $\sigma$ such that if
\begin{equation}\label{densitycondition}
\frac{w(\{x\in B_{2r} : u(x) \ge \nu \})}{w(B_{2r})} \ge \sigma
\end{equation}
 and
\begin{equation}\label{d.def}
d \coloneqq \left(\frac{r}{R}\right)^{\frac{sp}{p-1}} \mathrm{Tail}(u_{-};x_0,R) \le \epsilon \nu,
\end{equation}
then
\begin{equation}\label{positivity}
\inf_{B_r} u \ge \epsilon \nu.
\end{equation}
\end{lemma}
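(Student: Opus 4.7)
The plan is to follow the nonlocal De~Giorgi strategy adapted from~\cite{DiCastroKuusiPalatucci14}. First I would use the logarithmic estimate Proposition~\ref{lem.log} together with the weighted \Poincare{} inequality Theorem~\ref{thm:poincare} and the density hypothesis~\eqref{densitycondition} to deduce that a suitable sublevel set of $u$ has small weighted measure on $B_{2r}$. Then I would run a De~Giorgi iteration based on Caccioppoli (Proposition~\ref{lem.caccio}) and Sobolev-\Poincare{} (Theorem~\ref{thm:sobolev_poincare}) applied to $(k_j-u)_+$ on a shrinking sequence of balls in $B_r$ to upgrade that smallness into the pointwise positivity $u\ge \epsilon\nu$.

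For the first step, I apply Proposition~\ref{lem.log} to $u$ on $B_{2r}\subset B_R$ with the parameter $d$ from~\eqref{d.def}. By the very definition of $d$ one has $d^{1-p}(r/R)^{sp}\mathrm{Tail}(u_-;x_0,R)^{p-1}=1$, so the right-hand side of~\eqref{lo.in} is bounded by a constant depending only on $n,p,\lambda,\Lambda,s_0,[w]_{A_p}$. Combining this with Theorem~\ref{thm:poincare} applied to $\log(u+d)$ on $B_{2r}$ yields
$$\frac{1}{w(B_{2r})}\int_{B_{2r}} \bigl|\log(u+d) - \langle \log(u+d)\rangle_{B_{2r}}\bigr|^p\,d\mu \le C.$$
Setting $V\coloneqq (\log\tfrac{\nu+d}{u+d})_+$, hypothesis~\eqref{densitycondition} ensures $V$ vanishes on a set of weighted measure at least $\sigma w(B_{2r})$; a standard zero-set argument (Chebyshev plus triangle inequality) converts the oscillation bound above into $\tfrac{1}{w(B_{2r})}\int_{B_{2r}} V^p\,d\mu \le C/\sigma$, and then Chebyshev gives, for every $M\ge 2$,
$$\frac{w\bigl(\{u+d \le M^{-1}(\nu+d)\}\cap B_{2r}\bigr)}{w(B_{2r})} \le \frac{C}{\sigma(\log M)^p}.$$

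For the second step, fix $M$ large (to be chosen) and set $\epsilon\coloneqq (2M)^{-1}$; since $d\le\epsilon\nu$ the level $\nu_0\coloneqq M^{-1}(\nu+d)-d$ satisfies $\nu_0\ge (2M)^{-1}\nu = \epsilon\nu$. Define the decreasing levels $k_j\coloneqq \epsilon\nu + 2^{-j}(\nu_0-\epsilon\nu)$ and shrinking radii $r_j\coloneqq (1+2^{-j})r$. Applying Proposition~\ref{lem.caccio} for supersolutions with $v_j\coloneqq(k_j-u)_+$ on $B_{r_j}$, the nonlocal tail term in~\eqref{caccio} is controlled via $v_j(x)\le k_j+u_-(x)$ for $x\notin B_{r_j}$ together with $\mathrm{Tail}(u_-;x_0,r)\lesssim \mathrm{Tail}(u_-;x_0,R)\lesssim \epsilon\nu$, which follows from~\eqref{d.def} and $R\le 4r$; this makes the tail contribution comparable to $(\epsilon\nu)^{p-1}$. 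Combining with Theorem~\ref{thm:sobolev_poincare} in the usual manner (mirroring the proof of Theorem~\ref{thm:bounded}) yields a recursion
$$a_{j+1}\le C_0 b^j a_j^{\kappa}, \qquad a_j\coloneqq \frac{1}{w(B_{r_j})(\nu_0-\epsilon\nu)^p}\int_{B_{r_j}}v_j^p\,d\mu,$$
with $\kappa\coloneqq n/(n-s)>1$ and constants independent of $s\in[s_0,1)$. By Step~1, $a_0\le C/[\sigma(\log M)^p]$, which meets the smallness threshold of Lemma~\ref{lem:iteration} provided $M$ is chosen large enough. Hence $a_j\to 0$, i.e.\ $u\ge \epsilon\nu$ a.e.\ in $B_r$.

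The main obstacle is the simultaneous choice of parameters in the last step: $\epsilon$ has to be small enough both so that the hypothesis $d\le \epsilon\nu$ absorbs the Caccioppoli tail into the iteration shift $\epsilon\nu$, and so that the initial mass $a_0\lesssim \sigma^{-1}(\log(1/(2\epsilon)))^{-p}$ satisfies the smallness condition of Lemma~\ref{lem:iteration}. The logarithmic dependence on $\epsilon$ coming from Step~1 is what makes both constraints compatible. One also has to verify that every constant remains uniform as $s\nearrow 1$, which is precisely guaranteed by the explicit $c_s=s(1-s)$ normalization of $\Jspw$ and the $s$-stable \Poincare{} and Sobolev-\Poincare{} estimates developed in Section~\ref{sec:fract-weight-sobol}.
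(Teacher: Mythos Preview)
Your two-step strategy---log estimate plus density hypothesis to show a sublevel set has small weighted measure, then a De~Giorgi iteration via Caccioppoli and Sobolev--\Poincare{} to upgrade this to pointwise positivity---is exactly the paper's approach, and your handling of the tail via $d\le\epsilon\nu$ matches the paper's.

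There is one parameter slip that, as written, breaks the iteration. With $\epsilon=(2M)^{-1}$ you only secure $\nu_0\ge\epsilon\nu$, so when $d$ is close to $\epsilon\nu$ the gap $\nu_0-\epsilon\nu$ degenerates to order $\nu/M^2$ while $\nu_0\sim\nu/M$. Since $(\nu_0-u)_+\le\nu_0$ on $B_{2r}$, your $a_0$ actually satisfies
\[
a_0 \le \Bigl(\tfrac{\nu_0}{\nu_0-\epsilon\nu}\Bigr)^p \cdot \tfrac{C}{\sigma(\log M)^p},
\]
and the first factor can be $\sim M^p$, which dominates the $(\log M)^{-p}$ decay; the smallness threshold in Lemma~\ref{lem:iteration} is then never reached. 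The fix is immediate: either take $\epsilon=(4M)^{-1}$ so that $\nu_0-\epsilon\nu\gtrsim\nu_0$, or---as the paper does---run the iteration between the $d$-independent levels $2\epsilon\nu$ and $\epsilon\nu$, using that $\{u\le 2\epsilon\nu\}\subset\{u+d\le 3\epsilon(\nu+d)\}$ whenever $d\le\epsilon\nu$. With either adjustment your argument goes through and coincides with the paper's.
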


\begin{proof}
\textit{Step 1.}
We first prove
\begin{equation}\label{density2}
\frac{w(\{x \in B_{2r} : u(x) \le 2\epsilon\nu \})}{w(B_{2r})} \le \frac{\bar{c}}{\sigma\log(1/3\epsilon)}
\end{equation}
holds for every $\epsilon \in (0,\tfrac 14)$, where $\bar{c}\equiv\bar{c}(n,p,\lambda,\Lambda,s_0,[w]_{A_p})>0$. For $x\in B_{2r}$ set
\begin{equation*}
h(x) \coloneqq \min\left\{\left[\log\left(\frac{\nu+d}{u(x)+d}\right)\right]_{+},\,\log\left(\frac{1}{3\epsilon}\right)\right\}.
\end{equation*}
Note that $h(x)=0$ if and only if $u(x)\geq \nu$. Also note that
\begin{align*}
	\abs{h(x)-h(y)}&\leq \biggabs{\log \left(\frac{\nu+d}{u(x)+d}\right) -\log\left(\frac{\nu+d}{u(y)+d}\right) }
	\\
	&=\abs{\log(u(x)+d)-\log(u(y)+d)}.
\end{align*}
By the \Poincare~inequality Theorem~\ref{thm:poincare} and Proposition~\ref{lem.log} with $d$ as in \eqref{d.def} we have
\begin{equation}\label{vv}
\begin{aligned}
&\fint_{B_{2r}}|h-\mean{h}_{w,B_{2r}}|^p\,d\mu \\
&\le c(1-s) r^{sp}\fint_{B_{2r}}\int_{B_{2r}}\frac{|\log(u(x)+d)-\log(u(y)+d)|^p}{|x-y|^{sp}}\,\frac{d\mu(x)\,d\mu(y)}{w(B_{x,y})}
\\
&\le c.
\end{aligned}
\end{equation}
We note from assumption \eqref{densitycondition} and $0\le h\le \log(1/3\epsilon)$, that
$$
\begin{aligned}
\log\left(
 \frac{1}{3\epsilon}\right)  & = \frac{1}{w(\{x\in B_{2r}: h(x)=0\})}\int_{\{x\in B_{2r}: h(x)=0\}}\left[\log\left(
 \frac{1}{3\epsilon}\right)  - h\right]\,d\mu \\
&\le \frac{1}{\sigma}\left[\log\left(
 \frac{1}{3\epsilon}\right) -\mean{h}_{w,B_{2r}}\right].
\end{aligned}
$$
Integrating both sides with the measure $d\mu$ over the set $\left\{x\in B_{2r} : h(x)=\log(1/3\epsilon)\right\}$ and using \eqref{vv}, we find that
\begin{align*}
\frac{w\left(\left\{x\in B_{2r} : h(x)=\log(1/3\epsilon)\right\}\right)}{w(B_{2r})} &\le \frac{1}{\sigma \log(1/3\epsilon)}\fint_{B_{2r}}|h-\mean{h}_{w,B_{2r}}|\,d\mu 
\\
&\le \frac{\bar{c}}{\sigma \log(1/3\epsilon)}.
\end{align*}
By the definitions of $h$ we have the following implications
\begin{equation*}
u(x) \le 2\epsilon \nu
\ \ \Longrightarrow\ \ u(x)+d \le 3\epsilon(\nu+d)
 \ \ \Longrightarrow\ \ h(x)=\log\left(
 \frac{1}{3\epsilon}\right).
\end{equation*}
hence we get \eqref{density2} from the above inequality.

\textit{Step 2.} Now, we prove \eqref{positivity} by applying the De Giorgi iteration and choosing $\epsilon>0$ small enough. For each $j \in \mathbb{N}$, we set
\begin{equation*}
\rho_{j} \coloneqq (1 +2^{-j})r, \qquad \tilde{\rho}_{j} \coloneqq \frac{\rho_{j}+\rho_{j+1}}{2}, \qquad B_{j} \coloneqq B_{\rho_{j}}(x_{0})
\end{equation*}
and then choose cut-off functions $\phi_{j} \in C^{\infty}_{0}(B_{\tilde{\rho}_{j}})$ such that
$0 \le \phi_{j} \le 1$, $\phi_{j} \equiv 1$ in $B_{j+1}$ and  $|\nabla\phi_{j}| \le 2^{j+1}/r$.
We further define
\begin{equation*}
\ell_{j} \coloneqq (1+2^{-j})\epsilon \nu,
 \quad v_{j} \coloneqq (\ell_{j}-u)_{+},
 \qquad A_{j} \coloneqq \frac{w(\{x\in B_{j}:u(x)<\ell_{j}\})}{w(B_{j})}.
\end{equation*}
Note that
\begin{equation}\label{parameter.range}
\rho_{j},\tilde{\rho}_{j} \in (r,2r),
\quad d \le \epsilon \nu \le \ell_{j} \le 2\epsilon \nu,
\quad \ell_{j} - \ell_{j+1} = 2^{-j-1}\epsilon \nu \ge 2^{-j-2}\ell_{j}.
\end{equation}
Furthermore, \eqref{density2} implies
\begin{equation}\label{A0}
A_{0} = \frac{w(\{x\in B_{0}:u(x)<2\epsilon\nu\})}{w(B_{0})} \le \frac{\bar{c}}{\sigma\log(1/3\epsilon)} .
\end{equation}
Set $\kappa = \tfrac{n}{n-s_0}$. Using the definitions of $A_j$ and $\ell_j$ with the fact that $v_{j} \ge (\ell_{j}-\ell_{j+1})\indicatorset{u<\ell_{j+1}}$, the Sobolev-Poincar\'e inequality in Theorem~\ref{thm:sobolev_poincare}, H\"older's inequality and \eqref{eq:doubling}, we have
\begin{equation}\label{lhs.j+1}\begin{aligned}
A_{j+1}^{1/\kappa} (\ell_{j}\!-\!\ell_{j+1})^p
&= \bigg( \frac{1}{w(B_{j+1})}\int_{\{x\in B_{j+1} : u(x) < \ell_{j+1}\}}[\ell_{j}\!-\!\ell_{j+1}]^{p\kappa}\,d\mu \bigg)^{\frac{1}{\kappa}}  
\\
&\le \bigg(\fint_{B_{j+1}}v_{j}^{p\kappa}\,d\mu\bigg)^{\frac{1}{\kappa}}  \\
& \lesssim (1-s)r^{sp}\fint_{B_{j+1}}\int_{B_{j+1}}\!\!\frac{|v_{j}(x)-v_{j}(y)|^p}{|x-y|^{sp}}\,\frac{d\mu (x)\, d\mu(y)}{w(B_{x,y})}
+ \!\!\fint_{B_{j+1}}v_{j}^p\,d\mu.
\end{aligned}\end{equation}
We now estimate each term on the right-hand side of \eqref{lhs.j+1}. First we have that
\begin{equation}\label{Gwj}
\fint_{B_{j+1}}v_{j}^p\,d\mu \le c \fint_{B_{j}}v_{j}^p \,d\mu \le \frac{1}{w(B_{j})}\int_{\{x\in B_{j}:  u(x)\le\ell_{j}\}}\ell_{j}^{p}\,d\mu \le \ell_{j}^pA_{j}
\end{equation}
Next, we apply the \Caccioppoli~estimate from Proposition \ref{lem.caccio} to $v_{j}$, $B_{j}$ and $\phi_j$ and the same approach used in the estimate of \eqref{lb.in1}, to obtain
\begin{equation}\label{Caccio.jth}
\begin{aligned}
&\fint_{B_{j+1}}\int_{B_{j+1}}\frac{|v_{j}(x)-v_{j}(y)|^p}{|x-y|^{sp}}\,\frac{d\mu(x)\,d\mu(y)}{w(B_{x,y})} \\
&\le c\fint_{B_{j}}\int_{B_{j}}\left(\frac{|\phi_j(x)-\phi_j(y)|}{|x-y|^s} \max \{v_j(x),v_j(y)\} \right)^p\frac{d\mu(x)\,d\mu(y)}{w(B_{x,y})}\\
&\qquad +c\bigg(\fint_{B_{j}}v_{j}\phi_j^p\,d\mu\bigg)\bigg( \sup_{y\in B_{\tilde\rho_j}}\int_{\RRn \setminus B_{j}}\frac{v_{j}(x)^{p-1}}{|x-y|^{sp}}\frac{d\mu(x)}{w(B_{x,y})}\bigg)\\
& \le \frac{c2^{jp}}{(1-s) r^{sp}}\ell_{j}^{p}A_j +  c 2^{j(n+sp)}\ell_{j}A_j\int_{\mathbb{R}^{n}\setminus B_{j}}\frac{(\ell_{j}+u_{-}(x))^{p-1}}{|x-x_{0}|^{sp}} \frac{d\mu (x)}{w(B_{x,x_0})}\\
& \le  \frac{c2^{j(n+sp)}}{(1-s) r^{sp}}  \left(\ell_{j}^{p}   + \ell_{j}d^{p-1} \right) A_j \le \frac{c 2^{j(n+p)}}{(1-s) r^{sp}} \ell_{j}^{p}A_j.
\end{aligned}
\end{equation}
Combining all the estimates \eqref{lhs.j+1}, \eqref{Gwj} and  \eqref{Caccio.jth}, and using $\eqref{parameter.range}_3$, we obtain
\begin{equation*}
A_{j+1}^{1/\kappa} \le c_{2}2^{j(n+2p)}A_{j}
\end{equation*}
for a constant $c_{2}\equiv c_{2}(n,p,\lambda, \Lambda, s_0,[w]_{A_p})>0$.
Therefore, if choose $\epsilon$ small so that
\begin{equation}\label{delta.choice}
A_0\le \frac{\bar{c}}{\sigma\log(1/3\epsilon)}  \le c_2^{-\frac{\kappa}{\kappa-1}}2^{-\frac{(n+2p)\kappa}{(\kappa-1)^2}},
\end{equation}
by Lemma~\ref{lem:iteration}, we obtain $A_{j} \rightarrow 0$. This implies the desired inequality \eqref{positivity}.
\end{proof}

Now, we prove the H\"older continuity of the weak solution to \eqref{PDE}.

\begin{proof}[Proof of Theorem~\ref{thm:Holder}]
Let $B_{r} \equiv B_{r}(x_0) \Subset \Omega$. Set
\begin{align}\label{def.no0}
\nu_{0}
\coloneqq 2\bigg\{ c_b\bigg( \fint_{B_r}|u|^p\,d\mu \bigg)^{1/p} +\mathrm{Tail}(u;x_0,r/2)\bigg\},
\end{align}
where $c_{b}>0$ is the constant given in Theorem~\ref{thm:bounded}.
It is sufficient to show that there exists $\alpha\in(0,1)$ and $\tau\in(0,1)$ such that
\begin{equation}\label{eq:oscillationdecay}
\mathrm{osc}_{B_{\tau^jr/2}}  \le \tau^{\alpha j} \nu_0   \quad \text{for every }\ j\in \mathbb{N}_0.
\end{equation}
We prove this by induction. The case $j=0$ follows directly from Theorem~\ref{thm:bounded}.
We suppose now that \eqref{eq:oscillationdecay} holds for all $j\in\{0,1,\dots,i\}$ and aim to show \eqref{eq:oscillationdecay} for $j=i+1$.

For $j\in \mathbb N$ we set
\begin{align}
\label{rj}
r_j\coloneqq\tau^{j}\frac{r}{2},
\quad
B_j=B_{r_j}(x_0),
\quad
\nu_{j}\coloneqq\left( \frac{r_j}{r_0} \right)^{\alpha}\nu_{0}=\tau^{\alpha j}\nu_{0},
\end{align}
as well as
\begin{align*}
  M_j \coloneqq \sup_{B_j}\, u
  \quad\text{and}\quad
  m_j \coloneqq \inf_{B_j}\, u,
\end{align*}
where $\alpha$ and $\tau$ are small positive constants to be determined later. In particular, we assume that $0<\tau\le1/4$ and $0<\alpha \le \frac{s_0p}{2(p-1)}$

Then we have the following two cases:
\begin{align*}
\begin{aligned}
&\text{\textbf{(Case 1)}}:\quad
w(\{x\in 2B_{i+1}: u-m_i \ge \tfrac{1}{2}\nu_i\})\ge \tfrac{1}{2}w(2B_{i+1}),\\
&\text{\textbf{(Case 2)}}:\quad
w(\{x\in 2B_{i+1}: \nu_i-(u-m_i) \ge \tfrac{1}{2}\nu_i\})\ge \tfrac{1}{2}w(2B_{i+1}).
\end{aligned}
\end{align*}
We further define
$$
u_i\coloneqq
\begin{cases}
u-m_i & \text{(Case 1)},\\
\nu_i-(u-m_i)  &\text{(Case 2)},
\end{cases}
\quad\text{and}\quad
d_i \coloneqq\tau^{\frac{sp}{p-1}}\mathrm{Tail}((u_i)_{-};x_0,r_i).
$$
Note that, in both cases, $u_i$ is also a weak solution to \eqref{PDE} and nonnegative in $B_i$, and satisfies that $|u_i| \le M_j-m_j + \nu_i \le \nu_j+\nu_i\le 2\nu_j$ in $B_j$ for $j\in\{0,1,\dots,i\}$ and $ |u_i|\le |u|+2\nu_0$ in $\RRn\setminus B_0$.

Using these estimates, \eqref{eq:intweight2} and \eqref{def.no0}, we have
\begin{equation}\label{ho.tail.sp}
\begin{aligned}
&\frac{1}{(1-s)r_i^{sp}}\, \mathrm{Tail}((u_i)_-;x_0,r_i)^{p-1}\\
&=\sum_{j=1}^{i} \int_{B_{j-1} \setminus B_j} \frac{|u_{i}(x)|^{p-1}}{|x-x_0|^{sp}} \,\frac{d\mu(x)}{w(B_{x,x_0})} +\int_{\RRn \setminus B_0} \frac{|u_{i}(x)|^{p-1}}{|x-x_0|^{sp}} \,\frac{d\mu(x)}{w(B_{x,x_0})} \\
&\le  \sum_{j=1}^{i} \int_{B_{j-1} \setminus B_j} \frac{(2\nu_{j-1})^{p-1}}{|x-x_0|^{sp}} \,\frac{d\mu(x)}{w(B_{x,x_0})} +  \int_{\RRn \setminus B_0} \frac{(|u(x)|+2\nu_0)^{p-1}}{|x-x_0|^{sp}} \frac{dw(x)}{w(B_{x,x_0})} \\
&\le c  \sum_{j=1}^{i}  \frac{\nu_{j-1}^{p-1}}{r_j^{sp}} +c  \frac{\nu_0^{p-1}}{r_0^{sp}} + \frac{c}{(1-s)r_0^{sp}}\, \mathrm{Tail}(u_0;x_0,r_0)^{p-1}
\\
&\le \frac{c}{1-s}  \sum_{j=1}^{i}  \frac{\nu_{j-1}^{p-1}}{r_j^{sp}} .
\end{aligned}\end{equation}
Then, from the above inequality and the definitions of $r_i$ and $\nu_i$, we have
\begin{align*}
d^{p-1}
\le c  \nu_{i}^{p-1} \sum_{j=1}^{i}\tau^{(1+i-j)\{sp-\alpha(p-1)\}}
\le c  \nu_{i}^{p-1} \sum_{j=1}^{i}\tau^{jsp_0/2}
\le c\frac{\tau^{s_0p/2}}{1-\tau^{sp_0/2}}\nu_i^{p-1}.
\end{align*}
Here we choose $\tau$ small so that
$$
 d \le \left(c \frac{\tau^{s_0p/2}}{1-\tau^{sp_0/2}}\right)^{1/(p-1)}\nu_i\le  \frac{\epsilon}{2}\nu_i,
$$
where $\epsilon$ is the constant in Lemma~\ref{lem:density} when $\sigma=\tfrac 12$. 

Therefore, in light of Lemma~\ref{lem:density} for $\nu=\nu_i/2$, $\sigma=\tfrac 12$ and $B_r=B_{i+1}$, we have
$m_{i+1}=\inf_{B_{i+1}}\,u_{i+1} \ge \epsilon \nu_i/2$, which implies that
$$\begin{array}{ll}
\text{(Case 1)}&
m_{i+1}- m_{i} \le\frac{\epsilon}{2}\nu_i,\\
\text{(Case 2)}&
M_i -M_{i+1} \le \nu_i + m_i- M_{i+1} \le \frac{\epsilon}{2}\nu_i,
\end{array}$$
Moreover, since $M_i-m_i\le \nu_i$, $M_{i+1}\le M_i$ and $m_{i}\le m_{i+1}$, we have in both cases
$$
M_{i+1}-m_{i+1} \le (1-\tfrac{\epsilon}{2}) \nu_i = (1-\tfrac{\epsilon}{2})\tau^{-\alpha}\nu_{i+1}.
$$
Finally, choosing $\alpha$ sufficiently small such that $ (1-\epsilon/2)\le \tau^{\alpha}$, we obtain \eqref{eq:oscillationdecay} when $j=i+1$.
\end{proof}

\subsection{Harnack's inequaltiy}\label{sbusec:Harnack}

We start with recalling the following Krylov-Safonov type~\cite{KylovSafanov1980} covering lemma for a doubling measure $w$. We use the version stated by Kinnunen and Shamugalingam~\cite[Lemma 7.2]{KinnuenShamugalingam01}.

\begin{lemma}\label{lem:covering} Let $w$ be a doubling measure, $E\subset B_r(x_0)$ be a $w$-measurable set, and $\delta\in(0,1)$. Define
$$
E_{\delta}\coloneqq \bigcup_{0<\rho\le 3r/2} \{B_{3\rho}(x)\cap B_r(x_0): x\in B_r(x_0),\ w(E\cap B_{3\rho}(x))\ge \delta w(B_{\rho}(x)) \}.
$$
Then, either $E_{\delta}=B_r(x_0)$, or else $w(E_{\delta})\ge (c_w\delta)^{-1}w(E)$, where $c_w\ge 1$ is the doubling constant of $w$.
\end{lemma}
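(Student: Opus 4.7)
The plan is to prove this covering lemma by a Vitali-type stopping-time argument: associate to each Lebesgue point of $E$ a maximal radius at which the density condition holds, then cover $E$ by the expanded balls and sum carefully.

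First, assume $E_{\delta}\neq B_r(x_0)$. Since $w$ is doubling, Lebesgue differentiation applies and $w$-almost every $x\in E$ is a Lebesgue point. For each such $x\in E\subset B_r(x_0)$, define
\[
\rho_x \coloneqq \sup\bigl\{\rho\in(0,3r/2]\colon w(E\cap B_{3\rho}(x))\ge \delta\, w(B_\rho(x))\bigr\}.
\]
Since $w(E\cap B_{3\rho}(x))/w(B_\rho(x))\ge w(E\cap B_{3\rho}(x))/w(B_{3\rho}(x))\to 1$ as $\rho\to 0$, the density condition holds for all sufficiently small $\rho$, so $\rho_x>0$. Moreover $\rho_x<2r/3$: otherwise the density condition would hold at some $\rho\ge 2r/3$, and since $x\in B_r(x_0)$ and $3\rho\ge 2r$ give $B_{3\rho}(x)\supset B_r(x_0)$, we would conclude $B_r(x_0)\subset E_\delta$, contradicting our assumption.

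Next, I exploit the maximality of $\rho_x$ together with doubling to upgrade density failure into a controllable measure bound. Choose $\tilde\rho_x\in(\rho_x/2,\,\rho_x]$ with $w(E\cap B_{3\tilde\rho_x}(x))\ge \delta\, w(B_{\tilde\rho_x}(x))$, which exists by definition of the supremum. Since $2\tilde\rho_x>\rho_x$ and $2\tilde\rho_x<4r/3<3r/2$, the density condition must fail at $\rho=2\tilde\rho_x$, so
\[
w(E\cap B_{6\tilde\rho_x}(x))<\delta\, w(B_{2\tilde\rho_x}(x))\le c_w\,\delta\, w(B_{\tilde\rho_x}(x)).
\]

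Now I apply the classical $5r$-Vitali covering lemma to the uniformly bounded family $\{B_{\tilde\rho_x}(x)\}_x$, extracting a countable disjoint subfamily $\{B_{\tilde\rho_{x_i}}(x_i)\}_i$ such that the Lebesgue points of $E$ lie in $\bigcup_i B_{5\tilde\rho_{x_i}}(x_i)$. Combining this with the previous bound gives
\[
w(E)\le\sum_i w(E\cap B_{5\tilde\rho_{x_i}}(x_i))\le c_w\,\delta\sum_i w(B_{\tilde\rho_{x_i}}(x_i)).
\]
On the other hand, the construction guarantees $B_{3\tilde\rho_{x_i}}(x_i)\cap B_r(x_0)\subset E_\delta$, and since the inner balls $B_{\tilde\rho_{x_i}}(x_i)$ are pairwise disjoint, $\sum_i w(B_{\tilde\rho_{x_i}}(x_i)\cap B_r(x_0))\le w(E_\delta)$. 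A short geometric argument---using $x_i\in B_r(x_0)$ and $\tilde\rho_{x_i}<2r/3$ to place a ball of radius $\tilde\rho_{x_i}/2$ inside $B_{\tilde\rho_{x_i}}(x_i)\cap B_r(x_0)$---together with doubling yields $w(B_{\tilde\rho_{x_i}}(x_i))\lesssim w(B_{\tilde\rho_{x_i}}(x_i)\cap B_r(x_0))$. Summing and combining produces the claim up to absorbing finitely many doubling factors into $c_w$.

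The main obstacle is bookkeeping: obtaining the sharp constant $(c_w\delta)^{-1}$ rather than a weaker $(C(c_w)\delta)^{-1}$, since several doubling-type factors combine (the passage from $B_{2\tilde\rho_x}$ back to $B_{\tilde\rho_x}$, the $5r$-Vitali expansion, and the boundary correction comparing $B_{\tilde\rho_{x_i}}(x_i)$ with $B_{\tilde\rho_{x_i}}(x_i)\cap B_r(x_0)$). A secondary technical point is that the supremum defining $\rho_x$ need not be attained, so one must work throughout with an auxiliary $\tilde\rho_x$ chosen arbitrarily close to $\rho_x$.
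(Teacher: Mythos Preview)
The paper does not supply its own proof of this lemma; it simply quotes the result from Kinnunen--Shanmugalingam \cite[Lemma~7.2]{KinnuenShamugalingam01}. Your stopping-time/Vitali argument is the standard route and is correct in substance: the Lebesgue differentiation step, the bound $\rho_x<2r/3$ forced by $E_\delta\neq B_r(x_0)$, the failure of the density condition at scale $2\tilde\rho_x$, and the Vitali extraction all go through as you describe.

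As you yourself flag, the bookkeeping you outline yields $w(E_\delta)\ge (C(c_w)\delta)^{-1}w(E)$ with $C(c_w)$ a power of the doubling constant (one factor from $w(B_{2\tilde\rho_x})\le c_w\,w(B_{\tilde\rho_x})$, further factors from the $5r$-Vitali expansion and the boundary correction), rather than the clean $(c_w\delta)^{-1}$ in the stated lemma. For the application in this paper that discrepancy is immaterial: in the proof of the weak Harnack inequality (Theorem~\ref{thm:weak.harnack}) the lemma is invoked only once, to guarantee $w(E_\delta)\ge 2\,w(E)$ for a single fixed $\delta$ chosen in terms of $c_w$. Any bound of the form $w(E_\delta)\ge (C(c_w)\delta)^{-1}w(E)$ does the job after adjusting the choice $\delta\coloneqq(2C(c_w))^{-1}$. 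Your handling of the non-attained supremum via an auxiliary $\tilde\rho_x$ and of the boundary correction via a shifted sub-ball are both standard and fine.
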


Using this covering lemma and Lemma~\ref{lem:density}, we obtain the following weak Harnack inequality.

\begin{theorem}\label{thm:weak.harnack} (Weak Harnack inequality)
Let $u\in W^{s,p}_w(\RRn)$ be a weak supersolution to \eqref{PDE}. There exist constants $p_{0}>0$ and $c >0$ depending on $n,p,\lambda,\Lambda,s_0$ and $[w]_{A_p}$ such that if $u$ is nonnegative in $B_{2r} = B_{2r}(x_{0}) \Subset \Omega$, then there holds
$$
\bigg(\fint_{B_{r}}u^{p_{0}}\,d\mu\bigg)^{\frac{1}{p_{0}}}  \le c\inf_{B_{r}}u + c\,\mathrm{Tail}(u_-;x_0,2r).
$$
\end{theorem}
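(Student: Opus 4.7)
Set $m \coloneqq \inf_{B_r} u + \mathrm{Tail}(u_-;x_0,2r)$. The plan is to first establish a weak-type bound
\begin{align*}
  w\bigl(\{x \in B_r : u(x) > t\}\bigr) \le c\,(m/t)^q\, w(B_r) \qquad \text{for all } t \ge m,
\end{align*}
with constants $c,q>0$ depending only on $n,p,\lambda,\Lambda,s_0,[w]_{A_p}$. Once this is in hand, the stated $L^{p_0}$ bound follows for any $p_0\in(0,q)$ from the layer-cake identity $\int_{B_r} u^{p_0}\,d\mu = p_0\int_0^\infty t^{p_0-1} w(\{u>t\}\cap B_r)\,dt$, split at $t=m$: the portion $t\le m$ is bounded by $m^{p_0} w(B_r)$, and the portion $t>m$ converges because $p_0 < q$.

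To prove the weak-type bound I iterate Lemma~\ref{lem:density} along super-level sets via the covering Lemma~\ref{lem:covering}. Fix $t_0 \ge m$, choose $\delta \in (0,1/c_w)$, and let $\epsilon$ be the constant from Lemma~\ref{lem:density} for the density threshold $\sigma$ dictated by doubling below. Set $K \coloneqq 1/\epsilon$, $t_k \coloneqq t_0/K^k$, $A_k \coloneqq \{u>t_k\}\cap B_r$. I claim $w(A_{k+1}) \ge C\,w(A_k)$ with $C \coloneqq (c_w\delta)^{-1} >1$, as long as $A_k \neq B_r$. Indeed, Lemma~\ref{lem:covering} applied to $E=A_k$ in $B_r$ gives either $E_\delta = B_r$, or $w(E_\delta) \ge (c_w\delta)^{-1} w(A_k)$. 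Every $y\in E_\delta$ lies in some $B_{3\rho}(x)\cap B_r$ with $x \in B_r$ and $w(A_k \cap B_{3\rho}(x)) \ge \delta w(B_\rho(x)) \gtrsim \delta w(B_{6\rho}(x))$ by doubling; applying Lemma~\ref{lem:density} to the ball $B_{6\rho}(x)$ (with outer radius $R=12\rho$, provided $B_R(x) \subset B_{2r}(x_0)$) gives $\inf_{B_{3\rho}(x)} u \ge \epsilon t_k = t_{k+1}$, so $B_{3\rho}(x)\cap B_r \subset A_{k+1}$, hence $E_\delta \subset A_{k+1}$. This establishes the growth claim. Since $w(A_k)\le w(B_r)$, the iteration terminates at some $k^\star$ with $A_{k^\star}=B_r$, forcing $t_{k^\star} \le \inf_{B_r} u \le m$; combining $t_0 = K^{k^\star} t_{k^\star}$ with $w(A_0)\, C^{k^\star} \le w(B_r)$ yields the weak bound with exponent $q = \log C/\log K>0$.

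The main technical obstacle is ensuring the tail hypothesis of Lemma~\ref{lem:density} at every step of the iteration, and reconciling the radii produced by Lemma~\ref{lem:covering} with those required by Lemma~\ref{lem:density}. Because the iteration runs only while $t_k \ge m \ge \mathrm{Tail}(u_-;x_0,2r)$, it suffices to transfer the tail at the base center $x_0$ on scale $2r$ into a tail at an intermediate center $x\in B_r$ on scale $R\lesssim r$; this is done precisely as in the derivation of \eqref{ho.tail.sp}, by a dyadic annular decomposition combined with the $A_p$-comparison \eqref{Apinequality}. A secondary nuisance is that Lemma~\ref{lem:covering} admits radii $\rho$ up to $3r/2$, while the inclusion $B_{12\rho}(x) \subset B_{2r}(x_0)$ forces $\rho \lesssim r$; the remaining regime $\rho$ comparable to $r$ is harmless since there the covering ball $B_{3\rho}(x)$ already captures a fraction of $w(B_r)$ comparable to one, and the required growth of $w(A_k)$ is obtained by a direct application of Lemma~\ref{lem:density} on $B_{2r}(x_0)$. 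With these scale-and-tail reconciliations, the layer-cake step closes the argument.
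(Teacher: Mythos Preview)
Your strategy---iterate Lemma~\ref{lem:density} through the Krylov--Safonov covering Lemma~\ref{lem:covering} to obtain a weak-type bound on super-level sets, then close by layer-cake---is exactly the paper's. But there is a gap in your termination argument. You claim that ``the iteration terminates at some $k^\star$ with $A_{k^\star}=B_r$,'' reasoning that $w(A_k)$ grows geometrically and is bounded by $w(B_r)$. However, the growth step requires the tail hypothesis of Lemma~\ref{lem:density}, which you yourself say is only available while $t_k \ge m$. Since $t_k = t_0 K^{-k}\to 0$, the tail hypothesis eventually fails; when $\mathrm{Tail}(u_-;x_0,2r)$ is large relative to $\inf_{B_r}u$, the iteration will exhaust the range $t_k\ge m$ \emph{before} $A_k$ fills $B_r$, and then your conclusion $t_{k^\star}\le\inf_{B_r}u$ is simply unavailable. (A related but minor point: Lemma~\ref{lem:density} requires $d\le\epsilon t_k$, not merely $d\lesssim t_k$, so the stopping threshold must be a suitably large multiple of the tail, not the tail itself.)

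The paper circumvents this by \emph{shifting} the level sets: it iterates $A^i_t=\{x\in B_r:u(x)>t\epsilon^i - T/(1-\epsilon)\}$ with $T\eqsim\mathrm{Tail}(u_-;x_0,2r)$. Each application of the density lemma incurs an additive loss bounded by $T$, and the offset $T/(1-\epsilon)=\sum_{i\ge0}\epsilon^i T$ absorbs the total accumulated loss, so the iteration provably reaches $A^i_t=B_r$ and the weak-type bound follows with threshold $\inf_{B_r}u + T/(1-\epsilon)$. Your unshifted iteration can in fact be salvaged without this device: if it halts at step $k^\star$ because $t_{k^\star}$ drops below the tail threshold, you still have $w(A_0)\le C^{-k^\star}w(B_r)$ from the steps already performed, and $k^\star\gtrsim \log_K\bigl(t_0/\mathrm{Tail}(u_-;x_0,2r)\bigr)$ gives $w(A_0)\lesssim(\mathrm{Tail}/t_0)^q\,w(B_r)\le(m/t_0)^q\,w(B_r)$ directly. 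But this case distinction is missing from your write-up.
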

\begin{proof}  Set $\delta\coloneqq\frac{1}{2c_w}$ and $T\coloneqq c_3\mathrm{Tail}(u_-;x_0,2r)$, where the constant $c_3>0$ will be determined in \eqref{def.c3} below.
For each $i \in \mathbb{N}_0$ and $t>0$, we define
\begin{equation*}
A^{i}_{t} \coloneqq \left\{ x\in B_{r}:u(x)>t\epsilon^{i}-\frac{T}{1-\epsilon}\right\},
\end{equation*}
where $\epsilon \in (0,1/4)$ is the constant determined in Lemma~\ref{lem:density} when $\sigma=\frac{\delta}{6^{np}[w]_{A_p}}$.
Obviously, $A^{i-1}_{t} \subset A^{i}_{t}$.
 For $x \in B_{r}$ suppose that
$$
w(A^{i-1}_{t}\cap B_{3\rho}(x) )\ge  \delta w(B_{\rho}(x)).
$$
Then by \eqref{Apinequality} we have
$$
\frac{w(A^{i-1}_{t}\cap B_{6\rho}(x) )}{w(B_{6\rho}(x))}\ge   \frac{1}{6^{np}[w]_{A_p}}\frac{w(A^{i-1}_{t}\cap B_{3\rho}(x) )}{w(B_{\rho}(x))} > \frac{\delta}{6^{np}[w]_{A_p}}.
$$
In light of Lemma~\ref{lem:density}, to  $r=3\rho$, $\nu = t\epsilon^{i-1} - \frac{T}{1-\epsilon}$, $\sigma=\frac{\delta}{3^{np}[w]_{A_p}}$ and $d=2^{sp/(p-1)}\mathrm{Tail}(u_-;x,6\rho)$, we have
$$
\inf_{B_{3\rho}}u \ge \epsilon   \left(t\epsilon^{i-1}-\frac{T}{1-\epsilon}\right)  - d.
$$
Moreover, using the same argument as in the estimation of \eqref{loc.rom2.c2} and the facts that $\rho\le 2r/3$ and $u\ge 0$ in $B_{2r}$, it follows that
\begin{equation}\label{def.c3}
\begin{aligned}
\frac{d^{p-1}}{1-s}&= (3\rho)^{sp} \int_{\mathbb{R}^{n}\setminus B_{6\rho}(x)}\frac{u_{-}^{p-1}(y) }{|y-x|^{sp}}\,\frac{d\mu(y)}{w(B_{y,x})}
\\
&\le  (2r)^{sp} \int_{\RRn\setminus B_{2r}}\frac{u_{-}^{p-1}(y) }{|y-x|^{sp}}\,\frac{d\mu(y)}{w(B_{y,x})}\\
& \le c_{3}^{p-1}(2r)^{sp} \int_{\mathbb{R}^{n}\setminus B_{2r}}\frac{u_{-}^{p-1}(y) }{|y-x_0|^{sp}}\,\frac{d\mu(y)}{w(B_{y,x_0})}
= \frac{T^{p-1}}{1-s}
\end{aligned}
\end{equation}
for some $c_3>0$ depending on $n,p,\lambda,\Lambda,s_0$ and $[w]_{A_p}$. Therefore,
$$
\inf_{B_{3\rho}}u \ge \epsilon   \left(t\epsilon^{i-1}-\frac{T}{1-\epsilon}\right)  - T = t\epsilon^{i}-\frac{T}{1-\epsilon}.
$$
This implies $B_{3\rho}(x)\cap B_r(x_0)\subset A^{i}_{t}$, and hence, by the definition $E_{\delta}$, $(A^{i-1}_{t})_{\delta} \subset A^{i}_{t}$. Finally, by  Lemma~\ref{lem:covering} we have that for each $i\in \mathbb{N}$ and $t>0$, either $A^{i}_{t} = B_{r}$ or else $w(A^{i}_{t}) \ge 2 w(A^{i-1}_{t})$.

We claim that if
\begin{equation}\label{m.condition}
w(A^0_t) >  2^{-m} w(B_r)
\end{equation}
for some $m\in\mathbb N$, then $A^m_t=B_r$ and so
$$
u(x)>t\epsilon^{m}-\frac{T}{1-\epsilon} \quad \text{in }\ B_r.
$$
Indeed, if $A^{i}_{t} = B_{r}$ for some $1\le i \le m-1$ then $ B_r=A^{i}_{t} \subset A^m_t\subset B_r$, and  if $w(A^{i}_{t}) \ge 2 w(A^{i-1}_{t})$ for all $1 \le i \le m-1$ then
$$
w(A^{m-1}_{t})\ge  2 w(A^{m-1}_{t})  \ge \cdots \ge 2^{m-1} w(A^0_t) > 2^{-1} w(B_r),
$$
which implies $A^{m}_{t} = B_{r}$, since if $w(A^{m}_{t}) \ge 2w(A^{m-1}_{t})$ we have the contradictory  inequality $w(A^{m}_{t}) > w(B_r)$.

We next claim that for each $t>0$,
\begin{equation}\label{density.upbound}
\frac{w(\{u>t\}\cup B_r)}{w(B_r)} \le  \frac{w(A^0_t)}{w(B_r)} \le \frac{1}{\epsilon^\beta t^\beta } \left(\inf_{B_r}u +\frac{T}{1-\epsilon}\right)^\beta,
\end{equation}
where $\beta=\log_\epsilon{(1/2)}$. If $w(A^0_t)=0$, then the inequality is trivial. Suppose $w(A^0_t)>0$. Choose $m$ as the smallest positive integer satisfying \eqref{m.condition}, i.e. $$
m >\log_{1/2}{\left(\frac{w(A^0_t)}{w(B_r)}\right)} \ge m-1.
$$
Therefore, we have from the previous claim that
$$
\inf_{B_r} u >t \epsilon\left(\frac{w(A^0_t)}{w(B_r)}\right)^{1/\beta}  -\frac{T}{1-\epsilon},
$$
which yields \eqref{density.upbound}.

Finally, choosing $p_0=\beta/2$ and $a=\inf_{B_r} u +\frac{T}{1-\epsilon}$,
$$\begin{aligned}
\fint_{B_r} u^{p_0} \, d\mu & = p_0 \int_0^\infty t^{p_0-1} \frac{w(\{u>t\}\cup B_r)}{w(B_r)} \,dt \\
&\le  p_0\int_0^a t^{p_0-1} \,dt
+  \frac{p_0}{ \epsilon^{\beta} } \left(\inf_{B_r}u +\frac{T}{1-\epsilon}\right)^{2p_0} \int_a^\infty t^{-1-p_0}   \,dt\\
& = \left(1+ \frac{1}{\epsilon^\beta }\right) \left(\inf_{B_r}u +\frac{T}{1-\epsilon}\right)^{p_0} .
\end{aligned}$$
This completes the proof.
\end{proof}

The following lemma implies that the tail of $u_+$ is controlled by the supremum of $u$ and  the tail of $u_-$. The proof is almost the same as the one in \cite[Lemma 4.2]{DiCastroKuusiPalatucci14} involving the tracing of the constant $s$ and consideration of the $A_p$ weight $w$ as in the estimation of \eqref{lb.split}. Hence we omit the proof.

\begin{lemma}\label{lem:tail.pm}
Let $u \in W^{s,p}_w(\RRn)$ be a weak solution to \eqref{PDE} which is nonnegative and bounded in $B_{R}\equiv B_{R}(x_{0})$. Then for any $0<r\le R$, there holds
$$\begin{aligned}
\mathrm{Tail}(u_+;x_0,r) \le c \sup_{B_{r}}u + c \left(\frac{r}{R}\right)^{sp/(p-1)} \mathrm{Tail} (u_-;x_0,R).
\end{aligned}
$$
holds whenever , where $c\equiv c(n,p,\lambda, \Lambda,s_0,[w]_{A_p})>0$.
\end{lemma}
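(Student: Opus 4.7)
I will adapt the approach of \cite[Lemma~4.2]{DiCastroKuusiPalatucci14} to the weighted setting, using arguments analogous to those in the proofs of Theorem~\ref{thm:bounded} and Proposition~\ref{lem.log}. The plan is to test the weak equation with $\eta=\phi^p$, where $\phi\in C^\infty_c(B_{3r/4})$ is a standard cutoff with $\phi\equiv 1$ on $B_{r/2}$ and $|\nabla\phi|\lesssim 1/r$. Exploiting the symmetry of $k$ and the antisymmetry of $|a-b|^{p-2}(a-b)$ and $\phi^p(x)-\phi^p(y)$ under $(x,y)\mapsto(y,x)$, this will yield
\begin{align*}
\int_{B_{3r/4}}\phi^p(x)\!\!\int_{\RRn\setminus B_{3r/4}}\!\! k(x,y)\frac{(u(y)-u(x))_+^{p-1}-(u(x)-u(y))_+^{p-1}}{|x-y|^{sp}}\,dy\,dx = \tfrac12 J_{\mathrm{loc}},
\end{align*}
where $J_{\mathrm{loc}}$ denotes the corresponding double integral over $B_{3r/4}\times B_{3r/4}$.

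Setting $M\coloneqq\sup_{B_r}u$, for $x\in\operatorname{supp}\phi\subset B_R$ we have $u(x)\in[0,M]$, yielding the pointwise estimates $(u(y)-u(x))_+\geq u_+(y)-M$ and $(u(x)-u(y))_+\leq M+u_-(y)$. On the set $\{u_+\geq 2M\}$ the former gives $(u(y)-u(x))_+\geq u_+(y)/2$, while the complementary regime contributes at most a multiple of $M^{p-1}$. Using the comparabilities $|x-y|\eqsim|y-x_0|$ and $w(B_{x,y})\eqsim w(B_{y,x_0})$ (via doubling and \eqref{Apinequality}) for $x\in B_{r/2}$, $y\in\RRn\setminus B_{3r/4}$, integrating $x$ over $B_{r/2}$ where $\phi\equiv 1$, splitting the $y$-integral into $B_R\setminus B_{3r/4}$ (where $u_-\equiv 0$ since $u\geq 0$ on $B_R$) and $\RRn\setminus B_R$ (where the factor $(r/R)^{sp}$ emerges from $r^{sp}/|y-x_0|^{sp}\leq(r/R)^{sp}\cdot R^{sp}/|y-x_0|^{sp}$), and controlling auxiliary integrals by Lemma~\ref{lem:intweight}, I arrive at
\begin{align*}
\frac{w(B_{r/2})}{(1-s)r^{sp}}\mathrm{Tail}(u_+;x_0,r)^{p-1} \lesssim |J_{\mathrm{loc}}| + \frac{w(B_{r/2})}{(1-s)r^{sp}}\Big[M^{p-1}+\Big(\frac{r}{R}\Big)^{sp}\mathrm{Tail}(u_-;x_0,R)^{p-1}\Big].
\end{align*}

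It remains to bound $|J_{\mathrm{loc}}|$. I will apply Young's inequality $|u(x)-u(y)|^{p-1}|\phi^p(x)-\phi^p(y)|\leq \epsilon|u(x)-u(y)|^p + c_\epsilon|\phi^p(x)-\phi^p(y)|^p$ (with $c_\epsilon\eqsim\epsilon^{-(p-1)}$): the second term, integrated against $k(x,y)/|x-y|^{sp}$ over $B_{3r/4}\times B_{3r/4}$, is bounded by $\lesssim c_\epsilon\,w(B_r)/((1-s)r^{sp})$ via Lemma~\ref{lem:intweight} with the positive exponent $(1-s)p$, while the first is controlled using the Caccioppoli inequality (Proposition~\ref{lem.caccio}) applied to $v=u=u_+$ (valid since $u\geq 0$ on $B_R$) with $k=0$ and a slightly enlarged cutoff (equal to $1$ on $B_{3r/4}$), yielding a bound $\lesssim\epsilon\bigl[M^p+M\cdot\mathrm{Tail}(u_+;x_0,r)^{p-1}\bigr]w(B_r)/((1-s)r^{sp})$. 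After dividing the main inequality by $w(B_{r/2})/((1-s)r^{sp})$, the right-hand side contains a term $c\epsilon M\cdot\mathrm{Tail}(u_+;x_0,r)^{p-1}$ matching the LHS. The main obstacle is the absorption of this term: choosing $\epsilon=1/(4cM)$ absorbs it, and although $c_\epsilon\eqsim M^{p-1}$, this is precisely compatible with the target since the gradient contribution becomes $\lesssim M^{p-1}$, which is exactly the desired order. Rearranging yields $\mathrm{Tail}(u_+;x_0,r)^{p-1}\lesssim M^{p-1}+(r/R)^{sp}\mathrm{Tail}(u_-;x_0,R)^{p-1}$, and the claimed inequality follows by raising to the power $1/(p-1)$.
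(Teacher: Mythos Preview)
Your argument is essentially correct and reaches the desired conclusion, but it takes a different route from the one the paper points to (\cite[Lemma~4.2]{DiCastroKuusiPalatucci14}). The key difference is the choice of test function: you use $\eta=\phi^p$, whereas the referenced proof uses $\eta=(u-2M)\phi^p$ with $M=\sup_{B_r}u$. Since $u-2M\in[-2M,-M]$ on $B_r$, the factor $(u(x)-2M)$ in the nonlocal integrand carries a definite sign and magnitude $\geq M$; this directly produces $M$ times the tail integral on the left, while the local term $I_1$ can be bounded pointwise by $c\,M^p|\phi(x)-\phi(y)|^p$ (up to a nonnegative discarded part), giving $|I_1|\lesssim M^p w(B_r)/((1-s)r^{sp})$. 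After dividing by $M$ one obtains the claim without any absorption and without invoking Caccioppoli.

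By contrast, testing with $\phi^p$ alone yields a local term $J_{\mathrm{loc}}$ that does not carry an intrinsic factor of $M$, forcing you to call Proposition~\ref{lem.caccio}, which reinjects $\mathrm{Tail}(u_+;x_0,r)$ on the right and necessitates the $\epsilon\sim 1/M$ absorption. This works, and your bookkeeping (in particular $c_\epsilon\eqsim M^{p-1}$ matching the target) is correct, but it is more circuitous. Two small points: you should treat the degenerate case $M=0$ separately (then $J_{\mathrm{loc}}=0$ and the nonlocal identity gives the bound directly), and you should be explicit that $\operatorname{supp}\phi$ sits at distance $\eqsim r$ from $\partial B_{3r/4}$ so that the auxiliary $M^{p-1}$ integrals are indeed controlled by Lemma~\ref{lem:intweight}.
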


Finally, we are in a position to prove Theorem \ref{thm:Harnack}.

\begin{proof}[Proof of Theorem~\ref{thm:Harnack}]
Note that from the supremum estimate in Theorem \ref{thm:bounded}, using standard interpolation and covering arguments, we  have
$$\begin{aligned}
\underset{B_{\sigma_1r}}{\sup}\, u_{+} \le c_q  \frac{\delta^{-\frac{p-1}{p}\frac{n}{s_0}}}{(\sigma_2-\sigma_1)^{n/q}}\bigg(\fint_{B_{\sigma_2 r}}u_+^{q}\,dw \bigg)^{\frac{1}{q}} + c\delta\, \mathrm{Tail}(u_{+};x_0,\sigma_2r),
\end{aligned}$$
for every $q>0$, $\tfrac 12 \le \sigma_1 < \sigma_2 \le 1$ and $\delta \in (0,1)$.
This together with Theorem~\ref{thm:weak.harnack} and Lemma~\ref{lem:tail.pm} yields
$$\begin{aligned}
\sup_{B_{\sigma_1r}} u_{+}\!
& \le c  \frac{\delta^{-\frac{p-1}{p}\frac{n}{s_0}}}{(\sigma_2\!-\!\sigma_1)^{n/\epsilon_0}}\bigg(\fint_{B_{\sigma_2 r}}u_+^{p_0}\,d\mu \bigg)^{\frac{1}{\epsilon_0}} + c \delta\, \mathrm{Tail}(u_{+};x_0,\sigma_2r)\\
& \le  \frac{c\delta^{-\frac{p-1}{p}\frac{n}{s_0}}}{(\sigma_2\!-\!\sigma_1)^{n/\epsilon_0}}\left(\inf_{B_{\sigma_2r}}u + c \mathrm{Tail}(u_{-};x_0,2\sigma_2r )\right)
 + c \delta  \sup_{B_{\sigma_2r}}u + c \delta  \mathrm{Tail}(u_{-};x_0,2r ).
\end{aligned}$$
Therefore, choosing $\delta$ sufficiently small, we obtain for any $1/2\le \sigma_1 < \sigma_2 \le 1$
$$
\underset{B_{\sigma_1r}}{\sup}\, u_{+}
 \le   \frac12  \sup_{B_{\sigma_2r}}u^{p-1} + \frac{c}{(\sigma_2-\sigma_1)^{n/\epsilon_0}}\inf_{B_{r}}u + c \mathrm{Tail}(u_{-};x_0,2r).
$$
From here, a standard iteration argument yields the Harnack inequality \eqref{eq:Harnack}.
\end{proof}


\appendix
\section{Riesz potential estimate}\label{sec:appendix}

In this section we prove a nonlocal replacement  of the Riesz type estimate
\begin{align*}
  \abs{v(x) - \mean{v}_B} &\lesssim \int_B \frac{\abs{\nabla v(y)}}{\abs{x-y}^{n-1}}\,dy.
\end{align*}
In particular, we prove the following estimate:
\begin{lemma}\label{lem:Rieszpotential}
  Let $\alpha \in (0,1)$, $B$ be a ball and $\psi\in L^\infty(B)$ be non-negative with $\norm{\psi}_{L^1(B)}=1$ and $\norm{\psi}_{L^\infty(B)}\leq c_0r^{-n}$. Then we have for all $v\in L^1(B)$ and every Lebesgue point $x \in \overline{B}$ of $v$
  \begin{align}
    \label{eq:Rieszpotential}
    \begin{aligned}
      \abs{v(x) - \mean{v}_\psi} &\lesssim (1\!-\!\alpha)\int_B\!\!\int_B \frac{|v(\zeta)\!-\!v(z)|}{\abs{\zeta-z}^{n+\alpha}(\abs{x\!-\!z}+\abs{x\!-\!\zeta})^{n-\alpha}} \, d\zeta\,dz\hspace*{-3mm}
      \\
      &\leq (1\!-\!\alpha)\int_B\!\!\int_B \frac{|v(\zeta)\!-\!v(z)|}{\abs{\zeta-z}^{n+\alpha}}\,d\zeta \frac{1}{\abs{x-z}^{n-\alpha}} \,dz.
    \end{aligned}
  \end{align}
  The hidden constant depends only on $n$ and $c_0$.
\end{lemma}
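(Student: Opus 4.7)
The second inequality in~\eqref{eq:Rieszpotential} is immediate from $|x-z| \leq |x-\zeta|+|x-z|$. For the first inequality I proceed in three steps.

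\emph{Step 1: Reduction to $\mean{v}_B$.} Using $\int_B \psi\,dz = 1$,
\[
  \mean{v}_B - \mean{v}_\psi = \frac{1}{|B|}\int_B\!\int_B \bigl(v(\zeta)-v(z)\bigr)\psi(z)\,d\zeta\,dz,
\]
so with $\psi \leq c_0 r^{-n}$ and $|x-\zeta|+|x-z| \leq 2r$ on $B\times B$ for $x\in\overline{B}$, this error is absorbed into the RHS of~\eqref{eq:Rieszpotential}. It therefore suffices to control $|v(x)-\mean{v}_B|$.

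\emph{Step 2: Continuous telescoping in scale.} Choose $R \eqsim r$ with $B \subset B_R(x)$ and set $f(t) := \mean{v}_{B_t(x)}$. Since $x$ is a Lebesgue point, $f(t)\to v(x)$ as $t\to 0^+$, so
\[
  v(x) - \mean{v}_{B_R(x)} = -\int_0^R f'(t)\,dt, \qquad f'(t) = \frac{n}{t}\bigl(\mean{v}_{\partial B_t(x)} - \mean{v}_{B_t(x)}\bigr).
\]
Writing the difference of means as a double integral yields
\[
  |f'(t)| \lesssim \frac{1}{t^{2n}}\int_{\partial B_t(x)}\!\int_{B_t(x)} |v(\zeta)-v(z)|\,d\sigma(\zeta)\,dz.
\]
Inserting $|\zeta-z|^{n+\alpha}/|\zeta-z|^{n+\alpha}$ and then integrating in $t$ via the coarea-type identity $d\sigma(\zeta)\,dt = d\zeta$ (with $t=|x-\zeta|$) leads to
\[
  |v(x)-\mean{v}_{B_R(x)}| \lesssim C(\alpha)\int_{B_R(x)}\!\int_{B_R(x)} \frac{|v(\zeta)-v(z)|\,\mathbf{1}_{|x-z|\leq|x-\zeta|}}{|\zeta-z|^{n+\alpha}|x-\zeta|^{n-\alpha}}\,d\zeta\,dz,
\]
and symmetrising in $(\zeta,z)$ turns $|x-\zeta|^{n-\alpha}$ into the symmetric $(|x-\zeta|+|x-z|)^{n-\alpha}$.

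\emph{Step 3: Recovery of the $(1-\alpha)$ prefactor --- the main obstacle.} The crude pointwise substitution $|\zeta-z|^{n+\alpha} \leq (2t)^{n+\alpha}$ only produces $C(\alpha)$ bounded in $\alpha$, which is not enough: already on Lipschitz test functions the RHS of~\eqref{eq:Rieszpotential} diverges like $1/(1-\alpha)$ as $\alpha\nearrow 1$ (a direct consequence of $\int_0^{2t}\rho^{-\alpha}\,d\rho \eqsim (2t)^{1-\alpha}/(1-\alpha)$), so only the explicit $(1-\alpha)$ prefactor yields an $\alpha$-stable estimate. To extract it, instead of bounding $|\zeta-z|^{n+\alpha}$ by its supremum I would work with the averaged behaviour of this weight: using polar coordinates around $\zeta\in\partial B_t(x)$ together with the integrability profile $\int_{B_t(x)}|\zeta-z|^{-\alpha}\,dz \eqsim t^{n-\alpha}/(1-\alpha)$ (Lemma~\ref{lem:intweight} with $w\equiv 1$), one can sharpen the pointwise step into the bound $|f'(t)| \lesssim (1-\alpha)\,t^{\alpha-n}\int_{\partial B_t}\!\int_{B_t} |v(\zeta)-v(z)||\zeta-z|^{-n-\alpha}\,d\sigma(\zeta)\,dz$, in which the factor $(1-\alpha)$ cancels precisely the singular scale $1/(1-\alpha)$ produced by the fractional kernel. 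Plugging this sharpened bound into Step~2 then gives the stated inequality with $C(\alpha)\eqsim (1-\alpha)$.
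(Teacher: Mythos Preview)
Your Step~3 does not produce the $(1-\alpha)$ factor. The claimed integrability profile $\int_{B_t(x)}|\zeta-z|^{-\alpha}\,dz \eqsim t^{n-\alpha}/(1-\alpha)$ is wrong for $n\ge 2$: the correct value (and what Lemma~\ref{lem:intweight} with $w\equiv 1$ actually gives) is $t^{n-\alpha}/(n-\alpha)$, which stays bounded as $\alpha\nearrow 1$. Worse, the ``sharpened'' bound $|f'(t)| \lesssim (1-\alpha)\,t^{\alpha-n}\int_{\partial B_t}\!\int_{B_t} |v(\zeta)-v(z)|\,|\zeta-z|^{-n-\alpha}\,d\sigma\,dz$ is false. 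Take $v=\indicator_{B_{t/2}(x)}$: then $|f'(t)|=\tfrac{n}{t}\,|\mean{v}_{\partial B_t}-\mean{v}_{B_t}|=\tfrac{n}{t}\,2^{-n}\eqsim 1/t$, while on the right-hand side $|\zeta-z|\eqsim t$ for $\zeta\in\partial B_t$, $z\in B_{t/2}$, so the double integral is $\eqsim t^{n-1-\alpha}$ and the whole expression is $\eqsim(1-\alpha)/t$. The telescope over concentric balls compares sphere and ball averages at the \emph{same} scale, and that comparison carries no intrinsic $(1-\alpha)$ gain. Step~1 has the same defect: the pointwise bounds $\psi\le c_0 r^{-n}$ and $|x-\zeta|+|x-z|\le 2r$ absorb $|\mean{v}_B-\mean{v}_\psi|$ into the double integral only \emph{without} the $(1-\alpha)$ prefactor; with it, the absorption fails whenever the variation of $v$ lives at scale $\eqsim r$.

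The paper obtains $(1-\alpha)$ by a different mechanism. It introduces the mollifier
\[
  \eta_1(h)\coloneqq \tfrac{(1-\alpha)n}{\omega_{n-1}(n-1+\alpha)}\,\indicator_{B_1(0)}\big(|h|^{-n-\alpha+1}-1\big),
\]
whose $L^1$-normalisation carries the factor $(1-\alpha)$, and differentiates $t\mapsto (v*\eta_{t|x-y|/2})\big(x+t(y-x)\big)$ along the segment joining $x$ to $y$. The derivative of $\eta$ then \emph{automatically} produces the prefactor $(1-\alpha)$; after a change of variables one lands directly on the kernel $|\zeta-z|^{-n-\alpha}(|x-z|+|x-\zeta|)^{-n+\alpha}$. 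The passage from this special weight $\omega=|B_1|^{-1}\indicator_{B_1}*\eta_1$ to a general $\psi$ is handled by applying the already-proven pointwise estimate at every $y$ and integrating against $\psi$, so the $(1-\alpha)$ is inherited rather than manufactured again. This is the missing idea: the gain has to be built into the kernel used for the telescoping, not recovered afterwards from the double integral.
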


\begin{proof}
  For fixed $\alpha \in (0,1)$ we define the radial weakly singular weight
  \begin{align*}
    \eta_1(x) &\coloneqq \frac{(1-\alpha)n}{\omega_{n-1}(n-1+\alpha)} \indicator_{B_1(0)} (\abs{x}^{-n-\alpha+1} -1),
  \end{align*}
  where $\omega_{n-1}= 2\pi^{n/2}/\Gamma(n/2)$ is the surface area of the $n$-ball.  Then $\eta_1 \geq 0$, $\eta_1 \in W^{1,1}(\RRd \setminus \set{0})$ and
  \begin{align*}
    \int \!\eta_1(x) \,dx &= \tfrac{(1-\alpha)n}{(n-1+\alpha)}  \int_0^1 r^{-\alpha} -r^{n-1}\,dr = \tfrac{(1-\alpha)n}{(n-1+\alpha)} \Big( \tfrac{1}{1-\alpha} -\tfrac{1}{n}\Big) = 1.
  \end{align*}
  We then define $\eta_r(x) \coloneqq r^{-n} \eta_1(x/r)$.

  Let $B_1=B_1(0)$ and $B_2=B_2(0)$. By scaling and translation it suffices to prove the claim for $B_2$.  We assume that~$x$ is a Lebesgue point of~$v$.  Set $\omega \coloneqq\abs{B_1(0)}^{-1}\indicator_{B_1(0)}\ast \eta_1$. Note that $\omega$ is supported in $B_2$, $w\geq0$ and $\norm{\omega}_1=1$.

  We prove the statement in two steps: First we show the estimate in the case $\psi =\omega$ and then we reduce the case of a general weight $\psi$ to that. Throughout the proof, we make sure that all (implicit) constants depend only on $n$ but not on $\alpha$.

  \textbf{Step 1:} We have that
  \begin{align*}
    \abs{v(x) - \mean{v}_\omega}
    &= \biggabs{\fint_{B_1} \!\big(v(x) - (v* \eta_{1})(y)\big) \,dy}
    \leq \fint_{B_1} \abs{v(x) - (v* \eta_{1})(y)}\,dy.
  \end{align*}
  We split the integral into
  \begin{align*}
    \fint_{B_1} \abs{v(x) - (v* \eta_{1})(y)}\,dy
    &\leq
    \fint_{B_1}\abs{v(x) - (v* \eta_{\frac{\abs{x-y}}{2}})(y)}\,dy
    \\
    &\quad
    +\fint_{B_1}\abs{v(y) - (v* \eta_{\frac{\abs{x-y}}{2}})(y)}\,dy
    \\
    &\quad
    +\fint_{B_1}\abs{v(y) - (v* \eta_{1})(y)}\,dy\eqqcolon \mathrm{I}+\mathrm{II}+\mathrm{III}.
  \end{align*}
  We start by estimating $\mathrm{I}$. Since $x$ is a Lebesgue point of~$v$, we can calculate
  \begin{align*}
    \lefteqn{v(x) - (v*\eta_{\frac{\abs{x-y}}{2}})(y)} \qquad
    &
    \\
    &= -\int_0^1 \frac{d}{dt} \Big((v * \eta_{t\frac{\abs{x-y}}2})\big(x + t(y-x)\big)\Big) \,dt
    \\
    &= -\int_0^1 \int v(z) \frac{d}{dt} \eta_{t\frac{\abs{x-y}}{2}}\big(x+t(y-x)-z\big)  \,dz \,dt.
    \\
    &= \int_0^1  \int \big(v(x+t(y-x)\big) - v(z) \big) \frac{d}{dt}  \eta_{t\frac{\abs{x-y}}{2}}\big(x+t(y-x)-z\big)  \,dz \,dt,
  \end{align*}
  where we used in the last step that $\int \!\frac{d}{dt} \eta_{t\frac{\abs{x-y}}{2}}(x+t(y\!-\!x)-z) \,dz=0$.
  Now,
  \begin{align*}
    \lefteqn{\frac{d}{dt}  \eta_{t \frac{\abs{x-y}}{2}}\big(x+t(y-x)-z\big)} \quad
    &
    \\
    &=
    \frac{d}{dt}  \Bigg( \Big(\frac{\abs{x-y}}{2}t\Big)^{-n} \eta_{1}\bigg(2\,\frac{x+t(y-x)-z}{t \abs{x-y}}\bigg) \Bigg)
    \\
    &= -\frac{d}{t} \eta_{t \frac{\abs{x-y}}{2}}\big(x+t(y\!-\!x)-z\big)
    - \Big( \frac{\abs{x\!-\!y}}{2} t \Big)^{-n} \!\nabla \eta_{1} \bigg(2\frac{x+t(y\!-\!x)-\!z}{t\abs{x\!-\!y}}\bigg) \cdot \frac{2(x\!-\!z)}{\abs{x\!-\!y}t^2}.
  \end{align*}
  Hence, with $[x,y]_t \coloneqq x+t(y-x)$ we estimate
  \begin{align*}
    \lefteqn{\biggabs{\frac{d}{dt}  \eta_{t\frac{\abs{x-y}}{2}}\big([x,y]_t-z\big)}} \quad
    &
    \\
    &\lesssim \indicator_{\set{\abs{[x,y]_t-z} \leq t \frac{\abs{x-y}}{2}}} \bigg((1-\alpha) t^{-1} \bigabs{[x,y]_t-z}^{-n+1-\alpha} \big( t \abs{x-y}\big)^{\alpha-1}
      \\
    &\qquad \hphantom{\indicator_{\set{\abs{[x,y]_t-z} \leq t \frac{\abs{x-y}}{2}}}} \!\!+(\abs{x\!-\!y}t)^{-n} (1\!-\!\alpha) \bigabs{[x,y]_t-z}^{-n-\alpha} (t \abs{x-y})^{n+\alpha} \frac{\abs{x-z}}{\abs{x-y} t^2} \bigg).
  \end{align*}
  Note that if $\abs{[x,y]_t-z} \leq t \frac{\abs{x-y}}{2}$, then
  \begin{align*}
    \abs{x-z}\geq \abs{x-[x,y]_t}-\abs{[x,y]_t-z}=t\abs{x-y}-\abs{[x,y]_t-z}\geq \abs{[x,y]_t-z}.
  \end{align*}
  This and the previous estimate imply
  \begin{align*}
    \biggabs{\frac{d}{dt}  \eta_{t\frac{\abs{x-y}}{2}}\big([x,y]_t-z\big)}\lesssim \indicator_{\set{\abs{[x,y]_t-z} \leq t \frac{\abs{x-y}}{2}}} (1-\alpha)t^{\alpha -2}\frac{\abs{x-y}^{\alpha-1} \abs{x-z}}{\bigabs{[x,y]_t-z}^{n+\alpha}}.
  \end{align*}
  Overall, we obtain
  \begin{align*}
    \lefteqn{\abs{v(x) - (v*\eta_{\frac{\abs{x-y}}{2}})(y)}} \quad &
    \\
    &\lesssim (1-\alpha)\int_0^1\!\!\int \indicator_{\set{\abs{[x,y]_t-z} \leq t \frac{\abs{x-y}}{2}}} |v([x,y]_t)-v(z)|\frac{t^{\alpha -2}\abs{x-y}^{\alpha-1} \abs{x-z}}{\bigabs{[x,y]_t-z}^{n+\alpha}}    \, dz\,dt .
  \end{align*}
  Hence,
  \begin{align*}
    \mathrm{I}&\lesssim (1-\alpha)\fint_{B_1}\!\!\int\!\!\int_0^1 \indicator_{\set{\abs{[x,y]_t-z} \leq t \frac{\abs{x-y}}{2}}} |v([x,y]_t)-v(z)|t^{\alpha -2}\frac{\abs{x-y}^{\alpha-1} \abs{x-z}}{\bigabs{[x,y]_t-z}^{n+\alpha}}   \,dt\, dz\,dy
  \end{align*}
  We now substitute $\zeta= x+t(y-x)=[x,y]_t$. Then $dy=t^{-n}d\zeta$ and $t=\frac{\abs{\zeta-x}}{\abs{y-x}}\geq \frac{\abs{\zeta-x}}{2}$. Using additionally Fubini, we get
  \begin{align*}
    \mathrm{I}&\lesssim (1-\alpha)\int\!\!\fint_{B_2}\int_{\frac{\abs{\zeta-x}}{2}}^1 \indicator_{\set{\abs{\zeta-z} \leq \frac{\abs{\zeta -x}}{2}}}|v(\zeta)-v(z)| t^{-n -1}\frac{\abs{\zeta-x}^{\alpha-1} \abs{x-z}}{\bigabs{\zeta-z}^{n+\alpha}} \,dt \, d\zeta\,dz
    \\
              &\lesssim (1-\alpha)\int\!\!\fint_{B_2} \indicator_{\set{\abs{\zeta-z} \leq \frac{\abs{\zeta -x}}{2}}}|v(\zeta)-v(z)| \frac{\abs{\zeta-x}^{-n+\alpha-1} \abs{x-z}}{\bigabs{\zeta-z}^{n+\alpha}}  \, d\zeta\,dz .
  \end{align*}
  Finally notice that if $\abs{\zeta-z} \leq \tfrac 12  \abs{\zeta -x}$, then
  \begin{align*}
    \tfrac 12 \abs{x-\zeta}\leq \abs{x-z}\leq \tfrac 32 \abs{x-\zeta}.
  \end{align*}
  This yields
  \begin{align*}
    \mathrm{I}&\lesssim (1-\alpha)\int_{B_2}\!\fint_{B_2} \frac{|v(\zeta)-v(z)|}{\abs{\zeta-z}^{n+\alpha}(\abs{x-z}+\abs{x-\zeta})^{n-\alpha}}  \, d\zeta\,dz .
  \end{align*}
  This finishes our estimates for $\mathrm{I}$. Next we estimate $\mathrm{II}$. We have
  \begin{align*}
    \mathrm{II}&\lesssim \fint_{B_1}\!\!\int \abs{v(y)-v(z)} \eta_{ \frac{\abs{x-y}}{2}}(y-z)\,dz\,dy\\
               &\lesssim (1-\alpha)\fint_{B_2}\!\int _{B_2}\indicator_{\set{\abs{y-z}\leq \frac 12 \abs{x-y}}}\abs{v(y)-v(z)} \abs{y-z}^{-n+1-\alpha}\abs{x-y}^{\alpha -1}\,dz\,dy
               \\
               &\lesssim  (1-\alpha)\fint_{B_2}\!\int _{B_2}\frac{\abs{v(y)-v(z)}}{\abs{y-z}^{n}}\,dz\,dy
               \\
               &\lesssim(1-\alpha) \int_{B_2}\!\int_{B_2} \frac{\abs{v(y)-v(z)}}{\abs{y-z}^{n+\alpha}(\abs{x-y}+\abs{x-z})^{n-\alpha}}\,dz\,dy.
  \end{align*}
  Finally, we estimate $\mathrm{III}$. Here we have
  \begin{align*}
    \mathrm{III}&\lesssim \fint_{B_1}\!\!\int \abs{v(y)-v(z)} \eta_{1}(y-z)\,dz\,dy\\
                &\lesssim (1-\alpha)\fint_{B_2}\!\int_{B_2}\abs{v(y)-v(z)} \abs{y-z}^{-n+1-\alpha}\,dz\,dy
                \\
                &\lesssim(1-\alpha) \int_{B_2}\!\int_{B_2} \frac{\abs{v(y)-v(z)}}{\abs{y-z}^{n+\alpha}(\abs{x-y}+\abs{x-z})^{n-\alpha}}\,dz\,dy.
  \end{align*}
  This finishes the proof in the case $\psi = \omega$.

  \textbf{Step 2:} Let now $\psi$ be a general weight as in the statement of the lemma and $\omega$ as in Step 1. Then
  \begin{align*}
    \abs{v(x)-\mean{v}_\psi} \leq \abs{v(x)-\mean{v}_\omega}+\abs{\mean{v}_\omega-\mean{v}_\psi}.
  \end{align*}
  We already estimated the first summand in Step 1. For the second term we have again by Step 1 and the bound on $\psi$
  \begin{align*}
    \abs{\mean{v}_\omega-\mean{v}_\psi}
    &\leq \int_{B_2} \abs{v(y)-\mean{v}_{\omega}} \psi(y) \,dy
    \\
    &\lesssim (1-\alpha)\int_{B_2} \int_{B_2}\!\int_{B_2} \frac{\abs{v(z)-v(\zeta)}}{\abs{z-\zeta}^{n+\alpha}(\abs{y-z}+\abs{y-\zeta})^{n-\alpha}}\,dz\,d\zeta \, \psi(y)\,dy
    \\
    &=(1-\alpha)\fint_{B_2}\!\int_{B_2} \frac{\abs{v(z)-v(\zeta)}}{\abs{z-\zeta}^{n+\alpha}} \int_{B_2} \frac{1}{(\abs{y-z}+\abs{y-\zeta})^{n-\alpha}} \,dy\,dz\,d\zeta
    \\
    &\lesssim \frac{1-\alpha}{\alpha}\fint_{B_2}\!\int_{B_2} \frac{\abs{v(z)-v(\zeta)}}{\abs{z-\zeta}^{n+\alpha}} \,dz\,d\zeta
    \\
    &\lesssim \frac{1-\alpha}{\alpha}\int_{B_2}\!\int_{B_2} \frac{\abs{v(z)-v(\zeta)}}{\abs{z-\zeta}^{n+\alpha}(\abs{x-z}+\abs{x-\zeta})^{n-\alpha}} \,dz\,d\zeta.
  \end{align*}
  This is the desired estimate for $\alpha \geq \frac 12$, since in this case we have $\frac 1\alpha \eqsim 1$. If $\alpha \in (0,\frac 12)$, then the factor $\frac{1-\alpha}{\alpha}$ blows up. In this case we can proceed as follows:
  \begin{align*}
    \abs{\mean{v}_\omega-\mean{v}_\psi}
  &\lesssim \int\int \abs{v(z)-v(y)}\omega(z)\psi(y)\,dz\,dy
  \\
  &\lesssim \fint_{B_2}\fint_{B_2}\abs{v(z)-v(y)} \,dz\,dy
  \\
  &\lesssim (1-\alpha)\int_{B_2}\!\int_{B_2} \frac{\abs{v(z)-v(y)}}{\abs{z-y}^{n+\alpha}(\abs{x-z}+\abs{x-y})^{n-\alpha}} \,dz\,dy ,
  \end{align*}
  where we used that $(1-\alpha)\eqsim 1$, $\abs{\psi}\lesssim 1$ and $\abs{\omega}\leq 1$. This finishes the proof.
\end{proof}
\begin{remark}\label{rem:Rieszpotential}
  The estimate in Lemma~\ref{lem:Rieszpotential} can be replaced by the estimate
  \begin{align}
    \label{eq:Rieszpotentialsimple}
    \abs{v(x) - \mean{v}_\psi}
    &\lesssim \int_B\!\!\int_B \frac{|v(\zeta)-v(z)|}{(\abs{x-z}+\abs{x-\zeta})^{2n}}  \, d\zeta\,dz
  \end{align}
  using a much simplified proof. However, only the factor $(1-\alpha)$ in~\eqref{eq:Rieszpotential} allows to prove $s$-stable estimates.
\end{remark}


\printbibliography
\end{document}